\tikzset{node distance=2cm, auto}
\newtheorem{theorem}{Theorem}[section]
\newtheorem*{theorem*}{Theorem}
\newtheorem{corollary}[theorem]{Corollary}
\newtheorem{lemma}[theorem]{Lemma}
\newtheorem{proposition}[theorem]{Proposition}
\theoremstyle{definition}
\newtheorem{definition}[theorem]{Definition}
\newtheorem{remark}[theorem]{Remark}
\newtheorem{example}[theorem]{Example}
\newtheorem{examples}[theorem]{Examples}
\newcommand{\N}{\mathbb{N}}
\def\R{\mathbb{R}}
\def\F{\mathcal{F}}
\def \Lip{\mathrm{Lip}}
\def\SLip{\mathrm{SLip}}
\begin{document}




\title[Asymmetric free spaces]{Asymmetric free spaces\\ and canonical asymmetrizations}

\author[A. Daniilidis]{Aris Daniilidis}

\author[J.M. Sepulcre]{Juan Mat\'{i}as Sepulcre}

\author[F. Venegas M.]{Francisco Venegas M.}

\date{}

\begin{abstract}
A construction analogous to that of Godefroy-Kalton for metric spaces allows to embed isometrically,  in a canonical way, every quasi-metric space $(X,d)$ to an asymmetric normed space $\mathcal{F}_a(X,d)$ (its quasi-metric free space, also called asymmetric free space or semi-Lipschitz free space). The quasi-metric free space satisfies a universal property (linearization of semi-Lipschitz functions). The (conic) dual of $\mathcal{F}_a(X,d)$ coincides with the nonlinear asymmetric dual of $(X,d)$, that is, the space $\SLip_0(X,d)$ of semi-Lipschitz functions on $(X,d)$, vanishing at a base point.
In particular, for the case of a metric space $(X,D)$, the above construction yields its usual free space.
On the other hand, every metric space $(X,D)$ inherits naturally a canonical asymmetrization coming from its free space $\mathcal{F}(X)$. This gives rise to a quasi-metric space $(X,D_+)$ and an asymmetric free space $\mathcal{F}_a(X,D_+)$. The symmetrization of the latter is isomorphic to the original free space $\mathcal{F}(X)$. The results of this work are illustrated with explicit examples.
\end{abstract}

\subjclass[2020]{Primary 46B20; Secondary 39B82; 46A22.}

\keywords{Free space, canonical asymmetrization, semi-Lipschitz functions, quasi-metric space.}

\maketitle

\tableofcontents

\section{Introduction}
\noindent Arens and Eells \cite{AE1956} showed that every metric space
$(X,D)$ can be isometrically embedded as a closed subset of a normed linear space. The closed linear span of the image of $X$ under this embedding is called the {\em Arens-Eells space}, see  \cite[Section~2.2]{Weaver}. The idea of considering isometric embeddings of metric spaces to linear spaces goes back to Kuratowski, Wojdy{s\l}awski and Klee~\cite{Klee_1951}. It also implicitly appears in classical works of Kantorovich~\cite{Kantorovich42, Kantorovich57}, where a new distance in the space of finite measures on $X$ was defined (known as the Kantorovich-Rubinstein distance, \cite{Hanin92} {\it e.g.}) in a way that entails an isometric embedding of $(X,D)$ into the dual space $C(X)^{*}$. In~\cite{AE1956}, the authors also obtained an analogous embedding of a uniform space in a locally convex linear space. The terms \emph{free Banach space} and respectively, \emph{free locally convex space} have then been conceived (\cite{raikov64, pestov93}) to refer to the resulting spaces. \smallskip \\
The terminology \emph{Lipschitz-free space} (or simply, \emph{free space}) over a metric space $(X,D)$ has been introduced and highly popularized with the seminal work of Godefroy and Kalton~\cite{GK_2003}, where the authors employed this term to describe a very similar construction to the one of the Arens-Eells space (see Remark~\ref{rem-AE} for a comparison), but with emphasis on the linearization of both the metric space and its natural morphisms (Lipschitz functions between metric spaces). Free spaces, under this new terminology, have rapidly gained the interest of many researchers in Functional Analysis (\cite{AK2009,APP,Borel2012,CDW2016,toni,Godard,Hajek2014}~{\em e.g.}) and the topic became, arguably, one of its most active trends nowadays.  \smallskip \\
Let us outline below the construction. Given a metric space $(X,D)$ with a distinguished point $x_0$ (called \emph{base point}), the free space $\F(X)$ is constructed as follows: we first consider as \emph{pivot} space (\emph{non-linear dual} of $X$) the Banach space $\mathrm{Lip}_0(X)$ of real-valued Lispchitz functions vanishing at the base point, endowed with the norm $$\|f\|_{\mathrm{Lip}}=\displaystyle\sup_{\substack{x,y\in X\\x\neq y}}\frac{|f(x)-f(y)|}{D(x,y)}.$$
Then each $x\in X$ is identified to a Dirac measure $\delta_x$ acting linearly on $\mathrm{Lip}_0(X)$ as evaluation. Then the mapping $$\widehat{\delta}: X \mapsto {\Lip_0(X)}^{*}$$ that maps $x$ to $\delta_x$ is an isometric embedding. The Lipschitz-free space $\mathcal{F}(X)$ over $X$ is defined as the closed linear span of $\widehat{\delta}(X)$ in $\mathrm{Lip}_0(X)^*$. Furthermore, the free space is a predual for $\mathrm{Lip}_0(X)$, meaning that $\mathcal{F}(X)^*$ is isometrically isomorphic to $\mathrm{Lip}_0(X)$ (therefore, the space $\mathrm{Lip}_0(X)$ is at the same time the (linear) dual of $\mathcal{F}(X)$ and the nonlinear dual of $X$).
For a survey on the properties and development of Lipschitz-free spaces, we refer the reader to \cite{G_2015}. We also refer to Bachir~\cite{Bachir2004} for prior constructions based on evaluations over some algebra of functions acting on $X$. \smallskip \\
In this work, using the aforementioned embedding, we show that metric-spaces can be asymmetrized in a canonical way, giving rise to quasi-metric spaces, that is, spaces equipped with an asymmetric distance (see forthcoming Definition~\ref{deftop}).
Semi-Lipschitz functions (Definition~\ref{slip}) are the natural morphisms for such spaces. Starting from a quasi-metric space $(X,d)$ with a base point $x_0\in X$, the normed cone structure (Definition~\ref{defnorm}) of the set $\SLip_0(X)$ of real-valued semi-Lipschitz functions on $X$, vanishing at $x_0$ is used as an {\it asymmetric pivot} space to obtain a semi-Lipschitz free construction, which is analogous to the Kalton-Godefroy symmetric construction (this latter uses as pivot the Lipschitz functions). This leads to an adequate notion of {\em semi-Lipschitz free space} (or {\em quasi-metric} free space) $\mathcal{F}_a(X,d)$ for $(X,d)$, where the set $\SLip_0(X)$ is both the nonlinear (conic) dual of $X$ and the (linear, conic) dual of $\mathcal{F}_a(X,d)$. We emphasize the fact that $\SLip_0(X)$ is not a linear space in general, therefore we need to enhance the duality of \emph{normed cones}. This being said, the semi-Lipschitz free construction remains compatible with the classical one in the symmetric case. Moreover, it is also compatible with the aforementioned canonical asymmetrization, in the sense that the semi-Lipschitz free space of the canonical asymmetrization of a metric space and the asymmetrization of its free space are often identical (Proposition~\ref{SS*}) and in any case they have isomorphic symmetrizations (Theorem~\ref{Nino}).
\smallskip

Quasi-metric spaces and asymmetric norms have recently attracted a lot of interest in modern mathematics: they arise naturally when considering non-reversible Finsler manifolds \cite{CJ,DJV,Gaubert3} (see also \cite{BCS, DH}), and meet applications in physics \cite{JLP}, as well as in game theory \cite{Gaubert1,Gaubert2}. The properties of spaces with asymmetric norms have been studied by several authors (see \cite{cobzas2012functional}, \cite{romaguera2005properties} and references therein), emphasizing similarities and differences with respect to the theory of (symmetric) normed spaces. Besides its intrinsic interest, and the aforementioned applications, this theory was also stimulated by the study of oriented graphs and by applications in Computer Science, mainly to the complexity of algorithms. In this work we endeavor a new insight
in the current {\it State-of-the-art}, by showing that morphisms of quasi-metric spaces can be linearized in a similar manner as in the symmetric case through an asymmetric free space, and that this asymmetric free theory behaves equally well and it is fully compatible to the symmetric theory in a canonical manner. Indeed, there is a canonical way to move from a symmetric to an asymmetric space and vice-versa, which in addition, is compatible with the embeddings to their free spaces.
\smallskip

The manuscript is organized as follows: in Section~\ref{prelim} we recall basic notions, definitions and we fix our notation. We also give some auxiliary results required for the development of the theory in the asymmetric case, together with results about linear functionals, dual conic-norms and continuity on normed cones. We also give the definition of a canonical asymmetrization of a metric space. The main result will be established in Section~\ref{main}, with the definition of the semi-Lipschitz free space $\F_a(X)$ of a quasi-metric space $X$ (Definition~\ref{free}) and its characteristic feature that its dual is exactly the space $\SLip_0(X)$ ({\em c.f.} Theorem~\ref{predual}). The semi-Lipschitz free space $\F_a(X)$ is a bi-complete asymmetric normed space (it is naturally endowed with an asymmetric norm). For this reason, we shall also refer to it as the asymmetric free space of $X$. In Section~\ref{linearization}, through a simple diagram chasing argument, we shall show that semi-Lipschitz free spaces enjoy a canonical (and useful) linearization property: every semi-Lipschitz map between pointed quasi-metric spaces extends to a linear map between the corresponding semi-Lipschitz free spaces (see Corollary~\ref{linecor}). In Section~\ref{examples} we give concrete examples of asymmetric free spaces in order to help the reader to get an insight for this new theory. Finally, Section~\ref{sec:6} contains open questions and outlines possible research lines for further investigation on this topic.
\medskip

\section{Notation, Preliminaries}\label{prelim}

\noindent Throughout this article we denote by $\mathbb{R}_+$ the set of non-negative real numbers and we use the convention $\inf \emptyset=+\infty$. Given a vector space $E$, we denote by $\|\cdot\|:E\to \R_+$ a norm on $E$ and by $\|\cdot|:E\to\R_+$ an {\em asymmetric norm} on $E$, that is, a function satisfying:
\begin{itemize}
\item[(i)] $\forall x,y\in E$:\, $\|x+y|\,\leq\,\|x|\,+\,\|y|$;  \smallskip
\item[(ii)] $\forall x\in E$:\quad $x=0 \iff \|x|=0$;  \smallskip
\item[(iii)] $\forall x\in E,\,\forall r > 0$:\, $\|r\,x|\,=r\,\|x|$.
\end{itemize}
If we replace the second condition by
$$
{\rm (ii)' }\quad x=0\quad\iff\quad \begin{cases}\
\|x|=0\\[0.15cm]
\ \|\!-\!\!x|=0
\end{cases}
$$
then we say that $\|\cdot|:E\to\R_+$ is an {\em asymmetric hemi-norm} on $E$. The terminology of \emph{asymmetric normed space} refers to pairs $(E,\|\cdot|)$ having either asymmetric norms or asymmetric-hemi norms on $E$. Notice that an asymmetric (hemi-)norm differs from a norm in the fact that the equality $\|\!-\!x|=\|x|$ is not necessarily true.  \smallskip

\noindent We may also consider, keeping the same notation, {\it extended asymmetric norms}, allowing $\|\cdot|$ to take the value $+\infty$. Finally, we denote by $u$ the asymmetric hemi-norm on $\R$ defined by
\begin{equation}\label{u}
u(x)=\max\{x,0\}, \quad\text{for every } x\in\R.
\end{equation}

\begin{remark}[Asymmetrizations in $\mathcal{F}(X)$]
\label{naturalasymmetrizednorm} There is a natural way to asymmetrize the
norm $\Vert \cdot \Vert _{\mathcal{F}}$ of the free space $\mathcal{F}(X)$ of
a given metric space $(X,D)$, based on the dual space $L:=\mathrm{Lip}_{0}(X).$ Let us denote by
$\langle \cdot ,\cdot \rangle $ the duality map
of the duality pair $(L,\mathcal{F}(X)) $. Then the norm
$||\cdot ||_{\mathcal{F}}$ of $\mathcal{F}(X)$ can be represented as follows:
\begin{equation}
\Vert Q\Vert _{\mathcal{F}}:=\sup_{\substack{ \phi \in L  \\ \Vert \phi
\Vert _{L}\leq 1}}\langle \phi ,Q\rangle ,\ \mbox{ for every }Q\in \mathcal{F}(X).  \label{kcwooeh}
\end{equation}
Let us recall that a (convex) cone in a linear space is a (convex) subset $P$ such that $\lambda x \in P$ for every $x\in P$ and $\lambda \in \mathbb{R}_+$. In this work we shall use the term {\it cone} to refer to a convex cone. Consider any generating closed cone $P$ of $L$ (i.e., $L=\mathrm{span}\left(
P\right) =P-P$) that satisfies:
\begin{equation}
\forall \phi \in L,\;\exists \phi _{1},\phi _{2}\in P:\;\left\{
\begin{array}{c}
\phi =\phi _{1}-\phi _{2}\medskip \\
\max \,\left\{ ||\phi _{1}||_{L},||\phi _{2}||_{L}\right\} \,\leq \,||\phi
||_L\,\leq \,||\phi _{1}||_{L}+||\phi _{2}||_{L}
\end{array}.
\right.  \label{eq:P}
\end{equation}
We set:
\begin{equation}
\Vert Q|_{\mathcal{F}_{P}}:=\sup_{\substack{ \phi \in P  \\ \Vert \phi \Vert
_{L}\leq 1}}\langle \phi ,Q\rangle ,\ \mbox{ for every }Q\in \mathcal{F}(X).
\label{eq_F_P-norm}
\end{equation}

Notice that for any $Q\in \mathcal{F}(X)$ we have $\max \left\{ \Vert Q|_{\mathcal{F}_{P}},\Vert\!\!-\!\!Q|_{\mathcal{F}_{P}}\right\} \leq \Vert Q\Vert _{\mathcal{F}}$. Since the supremum in \eqref{kcwooeh} is attained at some $\phi \in L$ with $||\phi ||_{L}=1$ (by Hahn-Banach theorem), using the
decomposition (\ref{eq:P}) we deduce:
\begin{equation}
\Vert Q\Vert _{\mathcal{F}}=\langle \phi ,Q\rangle =\langle \phi
_{1},Q\rangle +\langle \phi _{2},-Q\rangle \leq \Vert Q|_{\mathcal{F}_{P}}+\Vert\!\!-\!\!Q|_{\mathcal{F}_{P}}.  \label{eq:equiv}
\end{equation}
This shows that $(ii)^{\prime }$ holds and \eqref{eq_F_P-norm} defines an
asymmetric (hemi-)norm $\Vert \cdot |_{\mathcal{F}_{P}}$ on the vector space
$\mathcal{F}(X)$.\smallskip

We shall refer to the asymmetric norm $\Vert \cdot|_{\mathcal{F}_{P}}$ defined in~\eqref{eq_F_P-norm} as the $P$-\emph{asymmetrization} of the free space $\mathcal{F}(X)$, for which we implicitly assume that~\eqref{eq:P} holds. \smallskip
We shall mainly deal with the case where $P$ is the cone of positive Lipschitz functions,
that is:
\begin{equation*}
P=L_{+}:=\{\phi \in L:\phi \geq 0\}.
\end{equation*}
In this case, we denote the arising asymmetric norm by $\Vert \cdot |_{\mathcal{F}_{+}}$. Notice that if $\phi(=\phi^+-\phi^-)\in L$ then both its positive part $\phi^+$ and its negative part $\phi^-$ are also in $L$ and they satisfy $|\phi^+(x)-\phi^+(y)|\leq |\phi(x)-\phi(y)|$ and $|\phi^-(x)-\phi^-(y)|\leq |\phi(x)-\phi(y)|$, for all $x,y\in X$, which leads to~\eqref{eq:P}.\smallskip

\noindent More generally, a $P$-asymmetrization of $\mathcal{F}(X)$ is called canonical,
if $P$ is of the form
\begin{equation*}
P:=\left\{ \phi \in L:\,T\phi \geq 0\right\} ,
\end{equation*}
where $T$ is a {\em linear isometry} that identifies $L$ with some
Banach lattice in a canonical way.
\end{remark}

\subsection{Quasi-metric spaces}
Let us introduce the notion of a quasi-metric space, which will be the main focus of this work.
\begin{definition}[Quasi-metric space]\label{deftop}
A \emph{quasi-metric space} is a pair $(X,d)$, where $X\neq\emptyset$ and $${d:X\times X\to [0,\infty)}$$
is a function, called {\em quasi-metric} (or {\em quasi-distance}), satisfying: \smallskip
\begin{enumerate}
\renewcommand\labelenumi{(\roman{enumi})}
\leftskip .35pc
     \item $\forall x,y,z\in X$: $d(x,y)\leq d(x,z)+d(z,y)$ (\emph{triangular inequality});\smallskip
     \item $\forall x,y\in X$: $x=y$ $\iff$ $d(x,y)=0$.
\end{enumerate}
\end{definition}
\noindent Note that a quasi-metric does not possess the symmetric property $d(x,y)=d(y,x)$ of a distance. \\
If we replace the last condition by
$$
{\rm (ii)' }\qquad x=y\quad\iff\quad \begin{cases}
\ d(x,y)=0\\[0.15cm]
\ d(y,x)=0
\end{cases}
$$
then we say that $d$ is a {\em quasi-hemi-metric}.
In this work we shall also consider \emph{extended quasi-metrics} $\tilde{d}:X\times X\to [0,\infty]$, that is, quasi-metrics that satisfy the same two conditions above, but are also allowed to take the value $+\infty$. If $X$ is a vector space equipped with an (extended) asymmetric (hemi-)norm $\|\cdot|$, then the function
\begin{equation}\label{qnqd}
d(x,y):=\|y-x|,\quad\text{for all } x,y\in X
\end{equation}
is an (extended) quasi-(hemi-)metric on $X$ that satisfies:
\begin{equation}\label{invariant}
d(x+z,y+z)=d(x,y)\quad\text{and}\quad d(rx,ry)=rd(x,y),
\end{equation}
for all $x,y,z\in X$ and $r\in \mathbb{R}_{+}$. Furthermore, for every $x,y\in X$ the \emph{reverse quasi-metric} $\bar{d}$ is defined by $$\bar{d}(x,y)=d(y,x).$$
Throughout this paper, we shall treat both variants of quasi-metric spaces. The terminology of quasi-metric space will thus refer to a pair $(X,d)$ where $d$ is either a quasi-distance or a quasi-hemi-distance.

\begin{remark}[Terminology alert I]
The reader should be alerted that terminology may slightly vary according to the authors. Some authors allow the quasi-hemi-metric and the asymmetric hemi-norm to also take negative values. They also use the terms hemi-metric and hemi-norm to refer to what we call quasi-hemi-metric and asymmetric hemi-norm, respectively (see, for instance, \cite{Gaubert2}).
In our work, the adjective \textit{quasi} refers to the asymmetry of the metric, and the adjective \textit{hemi} to the fact that distinct elements $x, y$ in $X$ may have a quasi-distance $d(x,y)$ equal to $0$.
\end{remark}

Two quasi-metric spaces can be completely identified via isometries. (The reader should be alerted that the slightly weaker notion of {\it almost isometry} also exists, and is more appropriate in relation with Banach-Stone type theorems, see \cite{CJ, DJV}.)

\begin{definition}[Isometry]\label{defiso}
A bijective mapping $\Phi$ between extended quasi-metric spaces $(X,d)$ and $(Y,\rho)$ is called an \emph{isometry} if for every $x_1,x_2\in X$, it holds
\begin{equation*}\label{nno}
\rho(\Phi(x_1),\Phi(x_2))=d(x_1,x_2).
\end{equation*}
\end{definition}
\smallskip

\begin{definition}[Canonical asymmetrization of a metric space]
\label{canoasym} Let $(X,D)$ be a metric space with a base point $x_0\in X$. Every $P$-asymmetrization of
the free space $\mathcal{F}(X)$ ({\it c.f.} Remark~\ref{naturalasymmetrizednorm})
induces, via the isometric injection of $X$ into $\mathcal{F}(X),$ an
asymmetrization of the distance $D$, given by:
\begin{equation*}
D_{P}(x,y)=\Vert {\delta}_y-{\delta}_x|_{\mathcal{F}_{P}}=\sup_{\substack{ \phi
\in P  \\ \Vert \phi \Vert _{L}\leq 1}}(\phi (y)-\phi (x)),\ \quad \text{for
all }x,y\in X.  \label{eq_D_P}
\end{equation*}
The quasi-(hemi-)distance $D_P$ is called the $P$-asymmetrization of $(X,D).$ If $\Vert \cdot |_{\mathcal{F}_{P}}$ is a canonical asymmetrization
of $\mathcal{F}(X),$ then $D_{P}$ will be called a \emph{canonical
asymmetrization} of $D$. In case $P=L_{+}$, the canonical asymmetrization
will be denoted by $D_{+}.$ The diagram below illustrates the situation.
\end{definition}\bigskip

\begin{center}
\begin{tabular}{cccc}
\cline{2-3}\cline{2-3}
 & \multicolumn{1}{|c}{} & &  \multicolumn{1}{|c}{} \\
 &\multicolumn{1}{|c}{} & \multicolumn{1}{c}{$L=\Lip_0(X,D)$} & \multicolumn{1}{|c}{} \\
\multicolumn{1}{c|}{} & &  & \multicolumn{1}{|c}{}  \\
 & \multicolumn{1}{|c}{} & \multicolumn{1}{c}{$\|\phi\|_L=\underset{x\neq y}{\mathrm{sup}}\frac{\phi(x)-\phi(y) }{ D(x,y)}$} & \multicolumn{1}{|c}{}  \\
& \multicolumn{1}{|c}{} & & \multicolumn{1}{|c}{}  \\
\cline{2-3}\cline{2-3} &  &  \\
 \hspace{0.2cm} $\shortstack[cc]{{\ \ Nonlinear}\\{\ \ dual}} $ \Huge$\nearrow $  &  &  &  {\Huge$\nwarrow $} $\shortstack[cc]{{Linear\ \ \ \ \ \ \ }\\{dual\ \ \ \ \ \ \ }} $ \hspace{0.2cm} \\
&  &  \\
\cline{1-1}\cline{4-4}
\multicolumn{1}{|c|}{ $\shortstack{{\ }\\$(X,D)$} $ } & & &  \multicolumn{1}{|c|}{%
$\mathcal{F}(X)$} \\
\multicolumn{1}{|c|}{$D(x,y)=\|{\delta}_y-{\delta}_x\|_{\mathcal{F}}$} & & {\Huge $\shortstack{{$\widehat{\delta}$}\\$\longrightarrow $} $} &  \multicolumn{1}{|c|}{
$\|Q\|_{\mathcal{F}}:=\underset{\substack{\phi\in L\\\|\phi\|_L\leq 1\\ \ }}{\mathrm{sup}}\langle\phi,Q\rangle$
} \\
\cline{1-1}\cline{4-4} &  &  \\
$\Big\downarrow $ & &    & $\Big\downarrow$ \\
& &  \\ \cline{1-1}\cline{4-4}
\multicolumn{1}{|c|}{} & & & \multicolumn{1}{|c|}{} \\
\multicolumn{1}{|c|}{$D_+(x,y)=\|{\delta}_y\!-\!{\delta}_x|_{\mathcal{F}_+}$} & $$ &  &
\multicolumn{1}{|c|}{$\|Q|_{\mathcal{F}_+}:=\underset{\substack{\phi\in L,\phi\geq 0\\\|\phi\|_L\leq 1\\ \ }}{\mathrm{sup}}\langle\phi,Q\rangle$} \\\cline{1-1}\cline{4-4}
\end{tabular}
\end{center}

\vspace{1cm}

\noindent Let us illustrate the above notion of canonical asymmetrization by means of the following simple example.

\begin{example}[Canonical asymmetrizations of $\mathbb{R}$] \label{sorg} Let us consider $\mathbb{R}$ as a metric space, with its usual distance $D(x,y)=|y-x|,$ for all $x,y,\in \mathbb{R}$ and $x_{0}=0$ as a
distinguished point. It is well-known (\cite{Godard,Weaver}) that the
free space $\mathcal{F}(\mathbb{R})$ can be identified with the space of
Lebesgue integrable functions $\mathcal{L}^{1}(\mathbb{R})$, provided we
identify the space $L=(\mathrm{Lip}_{0}(X,D),\Vert \cdot \Vert _{L})$ of
real-valued Lipschitz functions vanishing at $0$ with the Banach space $\mathcal{L}^{\infty }(\mathbb{R})$ (essentially bounded Lebesgue measurable functions) via the canonical linear isometry $T\phi =\phi ^{\prime }$ (a.e.), for all $\phi \in L$ ({\em c.f.} Rademacher theorem). Then taken either
\begin{equation*}
P=L_{+}=\{\phi \in L:\phi \geq 0\}\quad \text{or, respectively,\quad }
P=\{\phi \in L:\phi ^{\prime }\geq 0\},
\end{equation*}
leads to two different canonical asymmetrizations of $\mathbb{R}$ (via the
asymmetrizations $\Vert \cdot |_{\mathcal{F}_{+}}$ and respectively $\Vert
\cdot |_{\mathcal{F}_{P}}$ of its free space). The first asymmetrization is
given by the formula
\begin{equation*}
D_{+}(x,y)=\Vert {\delta}(y)-{\delta}(x)|_{\mathcal{F}_{+}}=\sup
_{\substack{ \phi \in L_+ \\ \Vert \phi \Vert _{L}\leq 1}}\left( \phi
(y)-\phi (x)\right) .
\end{equation*}
Notice that $D_{+}(x,y)\leq \max \{|y-x|,|y|\}$. It can be easily seen that
if either $y>x>0$ or $y<x<0,$ then $D_{+}(x,y)=|y-x|$ (take $\phi_{\ast }(t)=|t|$
in $L_{+}$ with $||\phi_{\ast }||_{L}=1$). However, $D_{+}(1,n)=n-1,$ while $D_{+}(n,1)=1$ for every $n\geq 2.$ \smallskip

\noindent The second asymmetrization, thanks to the monotonicity of every $\phi$ in $P$, yields that for all $x,y\in X$
\begin{align*}
D_{P}(x,y)=\Vert {\delta}_y-{\delta}_x|_{\mathcal{F}_{P}}&=\sup_{\substack{\phi \in L,\,\phi ^{\prime }\geq 0 \\ \Vert \phi \Vert _{L}\leq 1}}\left(
\phi (y)-\phi (x)\right) \\ &=\max \{y-x,\,0\}=u(y-x)=d_{u}(x,y),
\end{align*}
where $u(\cdot )$ is the asymmetric hemi-norm given by $u(x)=\max\{x,0\}$ for all $x\in \mathbb{R}$ and $d_{u}$ the
corresponding quasi-hemi-distance.
\end{example}
\smallskip
\subsection{Symmetrized distance and topologies}
Every quasi-metric distance can be symmetrized in the sense of the following definition.

\begin{definition}[Symmetrized distance] \label{sym-dis}Let $(X,d)$ be a quasi-metric space. Then
\begin{equation}\label{eq-dist}
d^{s_0}(x,y)=\max\{d(x,y),d(y,x)\}\qquad\mbox{and}\qquad d^s(x,y)=d(x,y)+d(y,x)
\end{equation}
are two natural symmetrizations of the quasi-distance $d$, which are equivalent to each other:
\[
d^{s_0}(x,y)\leq d^{s}(x,y)\leq 2\,d^{s_0}(x,y), \quad\text{for all } x,y\in X.
\]
\end{definition}

\noindent If $d$ is an extended quasi-metric, then so is $\bar{d}$ and consequently the symmetrizations $d^s$ and $d^{s_0}$ give rise to extended metrics. In case that $X$ is a vector space and $d$ satisfies \eqref{invariant}, the above symmetrizations preserve the invariance by translations and homothety. Notice further that \eqref{eq:equiv} shows that the symmetrization of the $P$-asymmetrized norm $\|\cdot|_{\mathcal{F}_P}$ of a free space $\mathcal{F}(X)$ is equivalent to $\|\cdot\|_{\mathcal{F}}$ ({\it c.f.} Remark~\ref{naturalasymmetrizednorm}). A similar remark applies to the symmetrization of the $P$-asymmetrization of the distance of a metric space $(X,D)$ ({\it c.f.} Definition~\ref{canoasym}).

\begin{proposition}[Asymmetrization vs symmetrization]
 Assume that $(X,D_P)$ is a $P$-asymmetrization of a metric space $(X,D)$ (\textit{c.f.} Definition~\ref{canoasym}).  Then the symmetrizations $D_P^s$ and $D_P^{s_0}$ are bi-Lipschitz equivalent to the initial distance $D$, and consequently, the Banach spaces $\Lip_0(X,D)$, $\Lip_0(X,D_P^s)$ and $\Lip_0(X,D_P^{s_0})$ are isomorphic.
\end{proposition}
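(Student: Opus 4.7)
The proof should proceed by transferring the norm comparisons already established in Remark~\ref{naturalasymmetrizednorm} at the level of the free space to distance comparisons on $X$ via the isometric embedding $\widehat{\delta}: X \to \mathcal{F}(X)$, and then invoking the standard fact that bi-Lipschitz equivalent metrics yield isomorphic $\Lip_0$-spaces.

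More concretely, the first step is to instantiate the inequalities from Remark~\ref{naturalasymmetrizednorm}, namely
\[
\max\{\Vert Q|_{\mathcal{F}_P},\,\Vert\!-\!Q|_{\mathcal{F}_P}\}\;\leq\;\Vert Q\Vert_{\mathcal{F}}\;\leq\;\Vert Q|_{\mathcal{F}_P}+\Vert\!-\!Q|_{\mathcal{F}_P},
\]
at the element $Q=\delta_y-\delta_x$. Since $-Q=\delta_x-\delta_y$, this immediately gives, for all $x,y\in X$,
\[
\max\{D_P(x,y),D_P(y,x)\}\;\leq\;D(x,y)\;\leq\;D_P(x,y)+D_P(y,x),
\]
i.e. $D_P^{s_0}(x,y)\leq D(x,y)\leq D_P^{s}(x,y)$. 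Combining with the elementary comparison $D_P^{s_0}\leq D_P^{s}\leq 2\,D_P^{s_0}$ from Definition~\ref{sym-dis} yields the chain
\[
D_P^{s_0}(x,y)\;\leq\;D(x,y)\;\leq\;D_P^{s}(x,y)\;\leq\;2\,D_P^{s_0}(x,y)\;\leq\;2\,D(x,y),
\]
which establishes that each of $D_P^{s_0}$ and $D_P^{s}$ is bi-Lipschitz equivalent to $D$ (hence also to one another).

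The second step is routine: if two (symmetric) distances $D_1,D_2$ on $X$ satisfy $c_1 D_1\leq D_2\leq c_2 D_1$ with $c_1,c_2>0$, then a function vanishing at $x_0$ is Lipschitz with respect to $D_1$ if and only if it is Lipschitz with respect to $D_2$, and the corresponding Lipschitz norms satisfy $\|f\|_{\Lip(D_2)}\leq c_1^{-1}\|f\|_{\Lip(D_1)}$ and $\|f\|_{\Lip(D_1)}\leq c_2\|f\|_{\Lip(D_2)}$. Therefore the identity map on the underlying function space is a linear isomorphism between $\Lip_0(X,D_1)$ and $\Lip_0(X,D_2)$. Applying this twice (to the pair $(D,D_P^{s})$ and to the pair $(D,D_P^{s_0})$) yields the claimed isomorphisms.

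There is no genuine obstacle: the only nontrivial input is the two-sided estimate from Remark~\ref{naturalasymmetrizednorm} (which in turn relies on the Hahn--Banach attainment used to derive \eqref{eq:equiv}); once this is pulled back through the isometric embedding $\widehat{\delta}$, everything follows from simple algebraic manipulations and the standard transfer of Lipschitz norms under bi-Lipschitz changes of metric.
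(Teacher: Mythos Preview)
Your proof is correct and follows essentially the same approach as the paper: both rely on the two-sided estimate coming from Hahn--Banach attainment combined with the decomposition property~\eqref{eq:P}, then pass to the distance level and invoke the standard transfer of Lipschitz norms under bi-Lipschitz equivalence. The only cosmetic difference is that you cite the norm inequalities from Remark~\ref{naturalasymmetrizednorm} and instantiate them at $Q=\delta_y-\delta_x$, whereas the paper re-derives the same inequality directly at the level of $D(x,y)$ by picking a norming $\hat\phi\in L$ and decomposing it via~\eqref{eq:P}; the content is identical.
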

\begin{proof}
It suffices to prove the result for $D_P^{s}$. Take $x,y\in X$. Let $\hat{\phi}$ be a function in $L=\Lip_0(X,D)$ with $\|\hat{\phi}\|_L\leq 1$ such that
$$D(x,y)=\sup_{\substack{\phi\in L\\\|\phi\|_L\leq 1}}(\phi(y)-\phi(x))=\hat{\phi}(y)-\hat{\phi}(x).$$
Let ${\hat{\phi}}_1$ and ${\hat{\phi}}_2$ be functions in $P$ satisfying $\hat{\phi}=\hat{\phi}_1-\hat{\phi}_2$,
with the inequality\linebreak $\max\{\|\hat{\phi}_1\|_L,\|\hat{\phi}_2\|_L\}\leq \|\hat{\phi}\|_L=1$. Then
\begin{align*}
D(x,y)&=(\hat{\phi}_1(y)-\hat{\phi}_1(x))+(\hat{\phi}_2(x)-\hat{\phi}_2(y))\\ &\leq \sup_{\substack{\psi\in P\\\|\psi\|_L\leq 1}}(\psi(y)-\psi(x))+\sup_{\substack{\psi\in P\\\|\psi\|_L\leq 1}}(\psi(x)-\psi(y)),
\end{align*}
which coincides with $D_P(x,y)+D_P(y,x)=D_P^s(x,y).$
Furthermore, it is clear that $$D_P^s(x,y)=D_P(x,y)+D_P(y,x)\leq 2D(x,y).$$
Thus, the distances $D_P^s$ and $D$ are equivalent, and $\Lip_0(X,D)$ is linear isomorphic to $\Lip_0(X,D_P^s)$.
\end{proof}
\medskip

\noindent Every (possibly extended) quasi-metric space $(X,d)$ can be endowed with three ``natural'' topologies:\smallskip
\begin{enumerate}
\renewcommand\labelenumi{(\roman{enumi})}
\leftskip .35pc
\item The \emph{forward topology} $\mathcal{T}(d)$, generated by the family of open \emph{forward}-balls
$$\{B_{d}(x,r)\hbox{\rm :}\ x\in X, r>0\},$$
where $B_{d}(x,r):=\{y\in X\hbox{\rm :}\ d(x,y)<r\}, \, \text{for all } x\in X\,\text{ and } r>0.$
\smallskip

\item The \emph{backward topology} $\mathcal{T}(\bar{d})$, generated by the family of \emph{backward}-balls
    $$\{B_{\bar{d}}(x,r)\hbox{\rm :}\ x\in X, r>0\},$$
where $B_{\bar{d}}(x,r):=\{y\in X\hbox{\rm :}\ d(y,x)<r\}, \, \text{for all } x\in X\,\text{ and } r>0.$ \smallskip

\item The \emph{symmetric topology} $\mathcal{T}^s$,  generated by the family of sets
$$\{B_{d}(x,r)\cap B_{\hat{d}}(x,r)\hbox{\rm :}\ x\in X,r>0\}.$$
\end{enumerate}

\noindent The symmetric topology being generated by the symmetrized distance $d^{s_0}$ or $d^s$ defined in \eqref{eq-dist} is obviously a metric topology. On the other hand, $\mathcal{T}(d)$ and $\mathcal{T}(\bar{d})$ are not in principle metric topologies. Nevertheless, they are both first countable topologies, since they have local bases consisting of balls of rational radii.\smallskip

\noindent In what follows, unless stated otherwise, the default topology on a quasi-metric space $(X,d)$ will be its forward topology, which is either a $T_1$-topology (when $d$ is a quasi-metric) or a $T_0$-topology (when $d$ is a quasi-hemi-metric). \smallskip

\begin{example}[$(\R,d_u)$] Let us consider $\R$ with its (canonical) asymmetric distance $d_u$ (see Example~\ref{sorg}). It is easy to check that $\mathcal{T}(d_u)$ has a local basis of the form $\{[x_0,x_0+\varepsilon):\;\varepsilon>0\}$ for each $x_0\in \mathbb{R}$, while $\mathcal{T}(\bar{d_u})$ has a local basis consisting of sets of the form $(x_0-\varepsilon,x_0]$, and $\mathcal{T}(d_u^s)$ is the usual topology of $\mathbb{R}$.\smallskip \\
Notice that $d_u$ is issued from the asymmetric hemi-norm $u(x)=\max\{x,0\}$ for all $x\in\mathbb{R}$, see~\eqref{u} and~\eqref{qnqd}. Moreover, the unit ball $\overline{B}(0,1)=\{y\in\mathbb{R}:d_u(0,y)\leq 1\}=(-\infty,1]$ is not $\mathcal{T}(d_u)$-closed because $(1,\infty)$ is not $\mathcal{T}(d_u)$-open. Notice also that, for every topological space $X$, a function $f:X\to\mathbb{R}$ is upper semicontinuous if and only if $f:X\to(\mathbb{R},u)$ is continuous.
\end{example}
\smallskip

The following example reveals that the topology of a quasi-metric space, which is $T_1$, may not be $T_2$.
\begin{example}
Let $\{x_n\}_{n\in \mathbb{N}}$ be a sequence of distinct elements and consider the space $$X=\{x_n\;:\,n\in\mathbb{N}\}\cup\{\bar x, \,\bar y\},$$ where $\bar x$ and $\bar y$ are different from each other and from any element of the sequence. Then the function $d$ defined on $X\times X$ by $d(\bar x,\,x_n)=d(\bar y,\, x_n)=1/n$, for every $n\in \mathbb{N}$, and $d(x,y)=1$ for all other cases where $x\neq y$, is a quasi-metric on $X$. In this case, the forward topology $\mathcal{T}(d)$ cannot be $T_2$, since $\{x_n\}_n$ converges to both $\bar x$ and $\bar y$. Notice that the symmetrized distance $d^s$ is discrete, with $d^s(x,y)>1$, whenever $x\neq y$.
\end{example}
\smallskip
\subsection{Cones and conic norms}  In this subsection we shall recall from \cite{valero2006quotient} the notion of an abstract cone. To this end, let us first recall that a \emph{monoid} is a semigroup $(X,+)$ with neutral element $0$.

\begin{definition}[Abstract cone] \label{defcone} A \emph{cone} on $\mathbb{R}_+$ is a triple $(C,+,\cdot)$ such that $(C,+)$ is an abelian monoid (with neutral element $0$), and
$\cdot $ is a mapping from $\mathbb{R}_{+}\times X$ to $X$ such that
for all $x,y\in C$ and $r,s\in \mathbb{R}_{+}$:
\begin{enumerate}
\renewcommand\labelenumi{(\roman{enumi})}
\leftskip .35pc
\item $r\cdot (s\cdot x)=(rs)\cdot x$;

\item $r\cdot (x+y)=(r\cdot x)+(r\cdot y) \quad\text{and}\quad (r+s)\cdot x=(r\cdot x)+(s\cdot x)$;

\item $1\cdot x=x \quad\text{and}\quad  0\cdot x=0.$\vspace{-.4pc}
\end{enumerate}
\end{definition}
Note that this definition does not include the existence of additive inverses. However, when such an inverse exists for some $x\in C$, it is unique, and we denote it by $-x$.

\medskip
\noindent A \emph{subcone} of a cone $(C,+,\cdot)$ is a cone $(S,+|_{S},\cdot|_{S}) $ such that $S$ is a subset of $C$ and
$+|_{S}$ and $\cdot|_{S}$ are, respectively, the restriction of $+$ and $\cdot$ to $S\times S$.

\begin{definition}[Cancellative cone] A cone $(C,+,\cdot)$ is called \emph{cancellative} if for any
$x,y,z\in C $, $$x+z=y+z \implies x=y.$$
\end{definition}
It follows readily that every cone that embeds to a linear
space is cancellative. Before we proceed, let us give two examples of abstract cones which are not cancellative.

\begin{example}[Non-cancellative cone]  (i). Consider a cone $C$ and let $\mathcal{S}(C)$ be the set of subcones of~$C$, under the usual operations of subset addition and scalar product. Then $\mathcal{S}(C)$ may not be cancellative.  \\
Indeed, for $C=\mathbb{R}^2$, let us consider the following elements of $S(C)$:
$$X=\{(x,0)\,:\,x\in \mathbb{R}\}, \quad Y=\{(0,x)\,:\,x\in \mathbb{R}\}\quad \text{and} \quad Z=\{(x,x)\,:\,x\in\mathbb{R}\}.$$
It follows that $X+Z=Y+ Z$ but  $X\neq Y.$  \smallskip \\
\noindent (ii). For a nonempty set $X$, consider the set of non-negative functions $\mathbb{R}_+^X$, with the operations
$\lambda \odot f=f^\lambda$ (product with external scalar) and $f\oplus g=f\cdot g$ (addition). Then $\mathbb{R}_+^X$ is not cancellative.
\end{example}

\smallskip
\begin{definition}[Cone morphisms]\label{cone-linear}
 A \emph{linear mapping} from a cone
$(C_1,+,\cdot)$ to a cone $(C_2,+,\cdot)$ is a mapping $f\hbox{\rm :}\
C_1\to C_2$ such that $f(\alpha \cdot x+\beta \cdot y)=\alpha
\cdot f(x)+\beta \cdot f(y)$ for any $x,y\in C_1$ and any $\alpha,\beta \in \mathbb{R}_+$.
\end{definition}
\smallskip

\begin{remark}[Compatibility of cone morphisms]
Let $f$ be a linear mapping between two cones $C_1$ and $C_2$. Then if $H_i:=\{x\in C_i: \,-x\in C_i \}$ denotes the linear part of the cone $C_i$, for $i\in\{1,2\}$, then it is straightforward to see that for every $x\in H_1$,
$f(-x)=-f(x)$. In particular, the restriction of $f$ onto $H_1$ yields a linear mapping between the linear spaces $H_1$ and $H_2$.
\end{remark}

We shall now introduce the notion of a {\em conic-norm}, which will be relevant for our developments.

\begin{definition}[Conic norm] \label{defnorm}
A {\em conic-norm} on a cone $(C,+,\cdot)$ is a function $\|\cdot|\hbox{\rm :}\ C\to \mathbb{R}_{+}$ such that for all
$x,y\in C$ and $r>0$:\smallskip
\begin{enumerate}
\renewcommand\labelenumi{(\roman{enumi})}
\leftskip .35pc
\item $\|x+y|\leq \|x|+\|y|$;\smallskip
\item $\|x|=0\ \iff\ x=0$; \smallskip
\item $\|r\cdot x|=r\|x|$. \vspace{-.4pc}\smallskip
\end{enumerate}
\medskip
The pair $(C,\|\cdot|)$ is called \emph{normed cone}. If we replace condition (ii) by
$$
{\rm (ii)' }\qquad  x=0\iff \forall z\in C,\, \left[ x+z=0 \implies \|x|=\|z|=0\right],
$$
then we say that $\|\cdot|\hbox{\rm :}\ C\to \mathbb{R}_{+}$ is a {\em conic hemi-norm}. A cone equipped with either a conic-norm or a conic hemi-norm will be called {\em normed cone}. This is in accordance with the terminology {\em asymmetric normed space}, which refers to a vector space equipped with either an asymmetric norm or an asymmetric hemi-norm. (The asymmetry is now stemming from the use of a cone, rather than a vector space. Notice however that $C$ is not necessarily a cancellative cone.)
\end{definition}

\begin{example}\label{exfigure}
Consider the pair $(\mathbb{R}^2,\|\cdot|$), with
$$\|(x_1,x_2)|:=u(x_1)+u(x_2),$$
where $u$ is the canonical asymmetric hemi-norm of $\R$ given by
$u(x)=\max\{x,0\}$ for all $x\in \mathbb{R}$ (see also Example~\ref{sorg}).
By restricting $\|\cdot|$ to any cone $C\subseteq\mathbb{R}^2$, we obtain a conic-hemi-norm. The case $C=\mathbb{R}_{-}^2$ corresponds to an example of normed cone with the trivial conic hemi-norm equal to $0$ everywhere.
\begin{figure}[h]
\centering
\includegraphics[width=7.1cm]{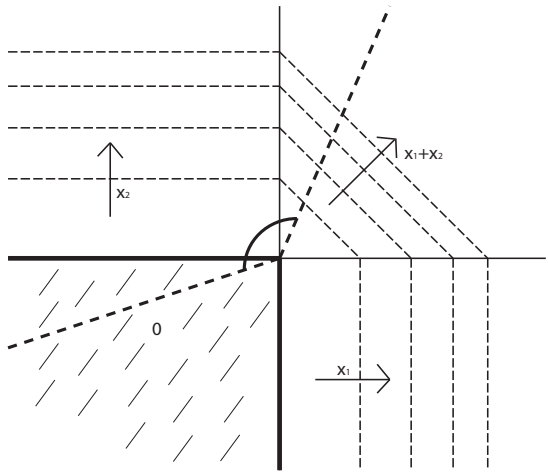}
\caption{Illustration of Example \ref{exfigure}}\label{fig}
\end{figure}

\end{example}

\begin{remark}[Terminology alert II]
 The reader should again be alerted that some authors (\cite{valero2006quotient}, \emph{e.g.}) employ the term of \emph{quasi-norm} to refer to what we call ``conic hemi-norm''. We decided to opt for the term ``conic hemi-norm'' because it is more suggestive. At the same time, the term ``quasi-norm'' might have a different meaning in the theory of  Banach spaces (\cite{AK2009}, {\em e.g.}). The asymmetric aspect of the conic-norm is inherent to the definition of a cone, and therefore does not require the prefix ``quasi''.
\end{remark}

\begin{remark}[Conic-norm vs asymmetric norm]  If the cone happens to be a linear space $X$, then the conic-norm corresponds to an asymmetric norm on $X$, and instead of the term ``normed cone'' we use the term \emph{asymmetric normed space}, as in \cite{cobzas2012functional}. The same applies to the case of conic hemi-norms and asymmetric hemi-norms. Given an asymmetric normed space $(X,\|\cdot|)$, one can define the \emph{reverse} norm of an element $x\in X$ as $\|\!\!-\!\!x|$, and the (symmetric) norms (symmetrizations of $\|\!\cdot|$)
$$\|x\|_{s_0}:=\max\{\|x|,\|\!\!-\!\!x|\} \qquad \text{and}\qquad \|x\|_{s}:=\|x| + \|\!-\!x|.$$

\noindent It is clear that the above norms are equivalent. 
\end{remark}

\noindent An extended quasi-metric $d$ on a cone $(C,+,\cdot)$ is called \emph{invariant} if it satisfies
\begin{equation}\label{inv2}
d(x+z,y+z)=d(x,y)\quad\text{and}\quad d(rx,ry)=rd(x,y),
\end{equation}
which is the case whenever the extended quasi-metric $d$ is induced by a conic-norm which is the restriction of an asymmetric norm of a vector space that contains $C$. An extended quasi-metric $d$ on a cone $(C,+,\cdot)$ is called \emph{subinvariant} if $d(x+z,y+z)\leq d(x,y)$ instead of the first part of \eqref{inv2}. More generally, the following result, established in \cite[Proposition 1]{garcia2004metrizability}, states that given a normed cone $(C,\|\cdot|)$, there is a natural way to generate an extended quasi-metric $d_e$.
\begin{proposition}[Extended quasi-metrics generated by conic-norms]\label{extendedqm}
\noindent Let $\|\cdot|$ be a conic-(hemi-)norm on a cone
$(C,+,\cdot)$. Then the function $d_e$ defined on $C\times C$ by
\begin{equation*}
d_e(x,y) = \inf_{\substack{z\in C\\y=x+z}} \|z|,
\end{equation*}
is a subinvariant extended quasi(-hemi)-metric on $C$. If the cone $(C,+,\cdot)$ is cancellative, then $d_e$ is invariant.
\end{proposition}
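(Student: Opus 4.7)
The plan is to verify each axiom of an extended quasi-(hemi-)metric directly, combining the monoid structure of $C$ with the sub-additivity and positive-homogeneity of $\|\cdot|$.

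The routine checks come first. Taking $z=0$ in the defining infimum yields $d_e(x,x)\leq\|0|=0$, so $d_e(x,x)=0$. For the triangle inequality, given any $z_1,z_2\in C$ with $y=x+z_1$ and $w=y+z_2$, associativity produces $w=x+(z_1+z_2)$, and sub-additivity of $\|\cdot|$ gives
\[
d_e(x,w)\;\leq\;\|z_1+z_2|\;\leq\;\|z_1|+\|z_2|;
\]
infimizing independently in $z_1$ and $z_2$ yields $d_e(x,w)\leq d_e(x,y)+d_e(y,w)$.

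Next I would handle the invariance statements. Whenever $y=x+z$ one has $y+w=(x+w)+z$, so $d_e(x+w,y+w)\leq\|z|$, and taking the infimum produces subinvariance. For homogeneity, $y=x+z$ implies $ry=rx+rz$ with $\|rz|=r\|z|$, hence $d_e(rx,ry)\leq r\,d_e(x,y)$ for every $r>0$; applying the same bound with $1/r$ to the pair $(rx,ry)$ gives the reverse inequality. When $C$ is cancellative, any decomposition $y+w=(x+w)+z$ cancels to $y=x+z$, so $d_e(x,y)\leq\|z|$; infimizing then upgrades subinvariance to full translation invariance.

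The main obstacle is the separation axiom. When $\|\cdot|$ is a conic-norm and $C$ is cancellative, the set $\{z\in C:y=x+z\}$ is either empty (forcing $d_e(x,y)=+\infty$) or a singleton $\{z^{\ast}\}$; hence $d_e(x,y)=0$ implies $\|z^{\ast}|=0$, and axiom (ii) gives $z^{\ast}=0$, that is, $x=y$. In the general (hemi) case one assumes both $d_e(x,y)=0$ and $d_e(y,x)=0$, and picks sequences $\{z_n\},\{w_n\}\subseteq C$ satisfying $y=x+z_n$, $x=y+w_n$, and $\|z_n|,\|w_n|\to 0$. Substituting one into the other yields $x+(z_n+w_n)=x$ and $y+(z_n+w_n)=y$, with $\|z_n+w_n|\to 0$, a configuration tailored for an application of condition (ii)' of the conic-hemi-norm. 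The delicate part is handling the possible non-attainment of the infimum and the non-uniqueness of decompositions in a non-cancellative cone, which requires a careful limiting use of (ii)' before concluding $x=y$.
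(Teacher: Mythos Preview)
The paper does not supply its own proof of this proposition; it simply cites \cite[Proposition~1]{garcia2004metrizability}. So there is no in-paper argument to compare against, and your verification of the routine axioms (vanishing on the diagonal, triangle inequality, subinvariance, positive homogeneity, and the upgrade to invariance under cancellativity) is exactly the standard one and is correct as written.

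The genuine gap is precisely where you locate it: the separation axiom. Two issues. First, in the conic-\emph{norm} case you only treat cancellative cones, whereas the statement claims a quasi-metric (not merely a quasi-hemi-metric) for an arbitrary cone. If $C$ is not cancellative, the set $\{z:y=x+z\}$ need not be a singleton, the infimum need not be attained, and $\|z_n|\to 0$ together with condition~(ii) alone does not force any $z_n=0$; your argument does not cover this. Second, in the hemi case your reduction to $x+(z_n+w_n)=x$ is correct, but condition~(ii)$'$ as stated in the paper characterizes when an element of $C$ equals $0$ via its additive inverses; it says nothing about the situation $x+u=x$ with $\|u|$ small, and in a non-cancellative monoid one cannot cancel to get $u=0$. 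So the ``careful limiting use of (ii)$'$'' you allude to is not a routine step you have merely postponed --- as formulated here, (ii)$'$ does not obviously yield $x=y$, and closing this gap (or checking that the cited reference works under additional standing hypotheses, e.g.\ cancellativity) is real work that remains to be done.
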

For $x\in C,$ $r\in \mathbb{R}_{+}\backslash \{0\}$ and $\varepsilon >0$, we have
$$rB_{d_e}(x,\varepsilon)=rx+\{y\in C\hbox{\rm :}\ \|y|<r\varepsilon\},$$ and the translations are $\mathcal{T}(d_e)$-open.

\begin{remark}\label{excl} {\rm (i)} The quasi-metric $d_e$ might take infinite values as long as $C$ is not a linear space (the infimum may be taken over the empty set). \smallskip\\
{\rm (ii)} If $C$ is a cancellative cone, then the infimum in the above definition becomes superfluous, and if $C$ is a linear space, the definition of $d_e$ coincides with the definition of the quasi-metric given in \eqref{qnqd}. \smallskip\\
{\rm (iii)} The quasi-metric induced by the reverse norm coincides with the one obtained by the reverse quasi-metric. The same is true for the symmetrized metric which coincides with the metric obtained by the symmetrization of the asymmetric norm.
\end{remark}

\noindent Using the extended quasi-metric of Definition~\ref{extendedqm}, we define an equivalence between normed cones.
\begin{definition}[Isomorphisms between normed cones]\label{defisocone}
A bijective mapping $\Phi:X\to Y$ between two normed cones is called an \emph{isometric isomorphism} if it is linear ({\em c.f.} Definition~\ref{cone-linear}) and an isometry between the corresponding extended quasi-metrics, that is, $$d_e(\Phi x_1,\Phi x_2)=d_e(x_1,x_2)\,,\quad\text{for all } x_1,x_2\in X.$$
Note that this is equivalent to the relation $\|\Phi x|=\|x|$, for all $x\in X.$
\end{definition}
We shall now proceed to define a notion of {\em completeness} for a quasi-metric space. Even though there are several non-equivalent notions of completeness in quasi-metric spaces (all of them generalizing, in some sense, completeness in metric spaces), we will focus on the one which is compatible with normed cones and asymmetric normed spaces:

\begin{definition}[Bicomplete quasi-metric space] A (possibly extended) quasi-metric space $(X,d)$ is called \emph{bicomplete} if the (extended) metric space $(X,d^s)$ is complete, meaning that any $d^s$-Cauchy sequence in $X$ is $d^s$-convergent in $X$. If $X$ is a linear space and $d$ is the quasi-metric induced by an asymmetric norm $\|\cdot|$, we say $(X,\|\cdot|)$ is a bi-Banach space, whenever $X$ is complete under the symmetrized metric~$d^s$.
\end{definition}
\medskip
\begin{definition}[Bicompletion of a quasi-metric space] Let $(X,d)$ be an (extended) quasi-metric space. A \emph{bicompletion} of $(X,d)$ is an (extended) quasi-metric space $(\tilde{X},\tilde{d})$, along with a mapping
$$\iota:(X,d)\to (\tilde{X},\tilde{d})$$ such that:
\begin{enumerate}\renewcommand\labelenumi{(\roman{enumi})}
\leftskip .35pc
    \item $\iota$ is an isometric embedding;\smallskip
    \item $\iota(X)$ is dense in $\tilde{X}$ for the symmetrized topology;\smallskip
    \item $\left(\tilde{X},\tilde{d}\right)$ is bicomplete.
\end{enumerate}
\end{definition}
\medskip
An important result regarding bicompleteness of normed cones (and therefore of asymmetric normed spaces) is the existence and uniqueness of the bicompletion, see \cite[Theorem 3.13]{oltra2004isometries}. This result, once again, generalizes the usual completion of normed linear spaces.

\begin{proposition}[Uniqueness of bicompletion for cancellative normed cones]
 Let $(C,\|\cdot|)$ be a cancellative normed cone. Then there exists a unique (up to an isometric isomorphism) bicompletion of $(C,\|\cdot|)$, which is also a normed cone, and the embedding into the bicompletion is linear. If $C$ is a linear space, then its bicompletion is an asymmetric normed space.
\end{proposition}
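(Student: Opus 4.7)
My plan is to realize $\tilde{C}$ as the Cauchy-sequence completion of $C$ with respect to the symmetrized extended metric $d_e^s$, and then to extend the cone operations and the conic norm by uniform continuity. Since $C$ is cancellative, Proposition~\ref{extendedqm} tells us that $d_e$ is invariant, hence so is $d_e^s$. I define $\tilde{C}$ as the set of $d_e^s$-Cauchy sequences in $C$ modulo the equivalence $(x_n)\sim(y_n)\iff d_e^s(x_n,y_n)\to 0$. The invariance of $d_e^s$ makes $+$ and $\cdot$ uniformly continuous on $d_e^s$-bounded sets, so they extend coordinate-wise to well-defined cone operations on $\tilde{C}$. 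From $y=x+z$ and the subadditivity of $\|\cdot|$ one obtains $\bigl|\,\|x|-\|y|\,\bigr|\leq d_e^{s_0}(x,y)\leq d_e^s(x,y)$, so $\|\cdot|$ is $1$-Lipschitz on $(C,d_e^s)$ and extends by $\|[(x_n)]|:=\lim_n \|x_n|$. The canonical map $\iota:x\mapsto[(x,x,\dots)]$ is a linear isometric embedding with dense image, and bicompleteness of $\tilde{C}$ follows from the standard diagonal argument used for metric completions.

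The cone and conic-norm axioms transfer to $\tilde{C}$ coordinate-wise. Cancellativity is preserved: if $[(x_n)]+[(z_n)]=[(y_n)]+[(z_n)]$, then invariance applied to $d_e^s(x_n+z_n,y_n+z_n)\to 0$ yields $d_e^s(x_n,y_n)\to 0$, so $[(x_n)]=[(y_n)]$. For uniqueness, given two bicompletions $(\tilde{C}_1,\iota_1)$ and $(\tilde{C}_2,\iota_2)$, the composition $\iota_2\circ\iota_1^{-1}$ is a linear isometry defined on the dense subset $\iota_1(C)$ of the complete space $(\tilde{C}_1,\tilde{d}_1^{\,s})$; uniform continuity lets me extend it uniquely to $\tilde{C}_1\to\tilde{C}_2$, and linearity and the isometric property pass to the limit, giving the desired isometric isomorphism. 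Finally, in the linear case, $-[(x_n)]:=[(-x_n)]$ supplies additive inverses in $\tilde{C}$, so $\tilde{C}$ becomes a vector space and the conic norm is an asymmetric norm.

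The main technical obstacle is the extended nature of $d_e$ when $C$ is not a linear space: the infimum in Proposition~\ref{extendedqm} may range over the empty set, so $(C,d_e^s)$ can split into equivalence classes of mutually finite distance. One has to verify that the coordinate-wise operations on $\tilde{C}$ are consistent across these components (the sum of Cauchy sequences coming from two distinct components should still land in a third component defined by a Cauchy sequence) and that the extended norm remains valued in $\mathbb{R}_+\cup\{+\infty\}$ in a way compatible with the original conic (hemi-)norm. These are bookkeeping issues handled by repeated use of subinvariance; they require care but present no conceptual obstruction.
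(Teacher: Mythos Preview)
The paper does not give its own proof of this proposition: it is stated immediately after the sentence ``see \cite[Theorem 3.13]{oltra2004isometries}'' and is simply imported from that reference. So there is no in-paper argument to compare against.

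Your sketch follows the standard Cauchy-sequence construction for metric completions, adapted to the cone setting through the invariance of $d_e$ guaranteed by cancellativity (Proposition~\ref{extendedqm}). This is precisely the route taken in the Oltra--Valero paper that the authors cite, so your approach and the paper's (outsourced) proof are aligned. One minor remark on your last paragraph: the conic norm $\|\cdot|$ is finite-valued by Definition~\ref{defnorm}, and since you have shown $\bigl|\,\|x_n|-\|x_m|\,\bigr|\le d_e^s(x_n,x_m)$, the limit $\lim_n\|x_n|$ is automatically a finite nonnegative real for any $d_e^s$-Cauchy sequence; there is no need to allow the value $+\infty$ for the extended norm on $\tilde C$. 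The only place where $+\infty$ may legitimately appear is in $\tilde d_e$, and there your invariance/triangle-inequality bookkeeping handles it.
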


\subsection{Semi-Lipschitz functions and dual cones}

Let us now define the class of semi-Lipschitz functions, which reflects naturally the asymmetry in the definition of a quasi-metric space.
\begin{definition}[Semi-Lipschitz function]\label{slip}
Let $(X,d)$ be a quasi-metric space. A function $f:X\to \mathbb{R}$ is said to be \emph{semi-Lipschitz} if there exists $L>0$ such that for every $x,y\in X$ we have:
\begin{equation}\label{SL}
f(x)-f(y)\leq L\,d(y,x).
\end{equation}
The class of semi-Lipschitz functions on $X$ is denoted by $\SLip(X)$.
\end{definition}
Let us recall that a Lipschitz function $f$ satisfies $|f(x)-f(y)|\leq L\,d(x,y)$ for all $x,y\in X$. Therefore, if $(X,d)$ is a metric space, the notions of semi-Lipschitz and Lipschitz function coincide. In a quasi-metric space, $f$ is Lipschitz if and only if both $f$ and $-f$ are semi-Lipschitz. To get easy examples of semi-Lipschitz functions that are not Lipschitz, consider functions of the form $d(x,\cdot)$ on the quasi-metric space of Example~\ref{sorg}.

\begin{definition}[Semi-Lipschitz conic-norm] \label{def-SL-norm} Let $(X,d)$ be a quasi-metric space.
The \emph{semi-Lipschitz conic (hemi-)norm} of a function $f:X \to \mathbb{R}$ is defined by
$$\|f|_S:=\inf\big\{L>0: \,\,\eqref{SL} \text{ holds}\big\}.$$
\end{definition}

The following proposition holds easily.

\begin{proposition}[Semi-Lipschitz criterium]\label{SLcri}
Let $(X,d)$ be a quasi-metric space and $f:X\to \mathbb{R}$.
\begin{itemize}
\item[i)] If $d$ is a quasi-metric, then
$f$ is semi-Lipschitz if and only if
$$\|f|_S=\sup_{x\neq y}\frac{\max\{f(x)-f(y),0\}}{d(y,x)}=\sup_{x\neq y}\frac{f(x)-f(y)}{d(y,x)}<\infty.$$
\item[ii)] If $d$ is a quasi-hemi-metric, then
$f$ is semi-Lipschitz if and only if $\|f|_S<\infty$.
In this case,
$$\|f|_S=\sup_{d(y,x)>0}\frac{\max\{f(x)-f(y),0\}}{d(y,x)}=\sup_{d(y,x)>0}\frac{f(x)-f(y)}{d(y,x)}.$$
\end{itemize}
\end{proposition}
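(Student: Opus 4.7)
The plan is to prove each stated equality by sandwiching $\|f|_S$ between the clipped supremum
$$M := \sup \frac{\max\{f(x)-f(y),0\}}{d(y,x)} \qquad\text{and}\qquad M_\star := \sup \frac{f(x)-f(y)}{d(y,x)},$$
indexed in the obvious way. The equivalence ``$f$ is semi-Lipschitz $\iff \|f|_S<\infty$'' in (ii) is immediate from Definition~\ref{def-SL-norm} together with the convention $\inf\emptyset=+\infty$, so the content lies in the sup formulas. Note that in (i) the index set $\{x\neq y\}$ and the index set $\{d(y,x)>0\}$ coincide, since the quasi-metric axiom of Definition~\ref{deftop} yields $d(y,x)=0\iff x=y$; this reduces (i) to the argument used in (ii), up to the extra caveat explained below.

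For the upper bound $M\leq \|f|_S$ (and simultaneously $M_\star\leq \|f|_S$), I would observe that whenever $L>0$ satisfies the defining inequality \eqref{SL}, then for any pair $(x,y)$ with $d(y,x)>0$ one has $\frac{f(x)-f(y)}{d(y,x)}\leq L$, and hence also $\frac{\max\{f(x)-f(y),0\}}{d(y,x)}\leq L$; taking the supremum over such pairs and then the infimum over admissible $L$ yields the bound.

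For the reverse bound $\|f|_S\leq M$, the key move is to verify that $M$ itself, when finite, is admissible in \eqref{SL}. For pairs with $d(y,x)>0$ this is immediate from the definition of $M$. For pairs with $d(y,x)=0$, case (i) is vacuous ($x=y$ forces $f(x)-f(y)=0$), whereas in case (ii) the inequality $f(x)-f(y)\leq 0$ is forced by the standing semi-Lipschitz hypothesis: picking any constant $L_0$ valid in \eqref{SL}, we get $f(x)-f(y)\leq L_0\cdot 0=0$. Thus $M$ is admissible, and $\|f|_S\leq M$.

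Finally, the equality $M=M_\star$ reduces to checking $M_\star\geq 0$ (since $M=\max\{M_\star,0\}$). If $f$ is non-constant, pick $x_0,y_0$ with $f(x_0)>f(y_0)$; the semi-Lipschitz hypothesis forces $d(y_0,x_0)>0$ (otherwise $f(x_0)-f(y_0)\leq L_0\cdot 0=0$, contradicting the choice), so this pair contributes a strictly positive ratio. If $f$ is constant, both $M$ and $M_\star$ equal $0$. The only genuinely delicate step is this last one, reconciling the \emph{``max''} and the \emph{``raw''} forms in the hemi-metric setting, which is precisely where the semi-Lipschitz hypothesis is used non-trivially rather than as a mere finiteness assumption.
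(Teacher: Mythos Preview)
Your proof is correct. The paper itself offers no proof of this proposition, stating only that it ``holds easily''; your argument supplies exactly the routine verification the authors left to the reader, and there is nothing to compare against. One tiny cosmetic point: the defining set in Definition~\ref{def-SL-norm} requires $L>0$, so when $M=0$ you should, strictly speaking, argue that every $\varepsilon>0$ is admissible and then let $\varepsilon\downarrow 0$, rather than saying $M$ itself is admissible; this does not affect the conclusion.
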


\begin{remark}
Let $(X,d)$ be a quasi-metric space and $f:X\to \mathbb{R}$. If for all $x,y\in X$ we have $f(x)\leq f(y)$ whenever $d(y,x)=0$ ($d$-monotonicity), then the following equality holds:
\begin{equation}\label{eqsupma}
\sup_{d(y,x)>0}\frac{\max\{f(x)-f(y),0\}}{d(y,x)}=\sup_{d(y,x)>0}\frac{f(x)-f(y)}{d(y,x)}\,.
\end{equation}
It follows readily from Definition~\ref{slip} that every semi-Lipschitz function is $d$-monotonic, and therefore it satisfies~\eqref{eqsupma}.
\end{remark}

\begin{examples}
(i) If $f:X\to \mathbb{R}$ is not semi-Lipschitz or $d$-monotonic, then the equality \eqref{eqsupma} is not necessarily true. For example, let $X=\{a,b\}$ with $a,b\in\mathbb{R}$, consider $d:X\times X\to[0,\infty)$ the quasi-hemi-metric given by $d(a,b)=1$ and $d(b,a)=0$, and let $f:X\to\mathbb{R}$ defined as $f(a)=1$ and $f(b)=0$. Then $f$ is not semi-Lipschitz, $\displaystyle{\sup_{d(y,x)>0}\frac{f(x)-f(y)}{d(y,x)}=-1}$ and $\displaystyle{\sup_{d(y,x)>0}\frac{\max\{f(x)-f(y),0\}}{d(y,x)}=0}$.

\noindent (ii) The equality \eqref{eqsupma} could be true without $f$ being semi-Lipschitz. For instance, let $X=\{a,b,c\}$ with $a,b,c\in\mathbb{R}$, consider $d:X\times X\to[0,\infty)$ the quasi-hemi-metric given by
$$d(x,y)=\begin{cases}
\,\,1,& \mbox{ if }x=a,y=b\\
\,\,1,& \mbox{ if }x=b,y=c\\
\,\,2,& \mbox{ if }x=a,y=c\\
\,\,0,& \mbox{ otherwise}
\end{cases},$$
and let $f:X\to\mathbb{R}$ defined as $f(a)=2$, $f(b)=1$ and $f(c)=1$. Then $f$ is not semi-Lipschitz, since $f(a)-f(b)=1$ and $d(b,a)=0$. However, $\displaystyle{\sup_{d(y,x)>0}\frac{f(x)-f(y)}{d(y,x)}=0}$ and $\displaystyle{\sup_{d(y,x)>0}\frac{\max\{f(x)-f(y),0\}}{d(y,x)}=0}$.
\end{examples}

\begin{remark}[Terminology alert III]\label{remarkim} The above definition of semi-Lipschitz function, introduced in~\cite{DJV}, differs from the one that is usually considered in the literature and is based on an inequality of the form:
\begin{equation}
    f(x)-f(y)\leq L\,d(x,y).\label{otroslip}
\end{equation}

A function $f:(X,d)\to \mathbb{R}$ is semi-Lipschitz according to Definition~\ref{slip} if and only if it is semi-Lipschitz on $(X,\bar{d})$ according to~\eqref{otroslip}. This is also equivalent to $-f$ being semi-Lipschitz on $(X,d)$ according to~\eqref{otroslip}. Therefore, the difference between these two definitions of a semi-Lipschitz function is equivalent to either a change of orientation of the quasi-metric (replace $d$ by $\bar d$) or of the sign of the values of $f$ (replace $f$ by $-f$). With this in mind, let us now justify our choice for Definition~\ref{slip}:
\medskip
\begin{enumerate}\renewcommand\labelenumi{(\roman{enumi})}
\leftskip .35pc
    \item If $(X,\|\cdot|)$ is a normed cone, the norm $\|\cdot|$ may not be semi-Lipschitz according to~\eqref{otroslip}, while $-\|\cdot|$ is always semi-Lipschitz according to~\eqref{otroslip}. \smallskip
    \item In general, if $(X,d)$ is a quasi-metric space, the functions of the form $d(x_0,\cdot)$ that characterize forward convergence (in the sense that $\{x_n\}_n\!\longrightarrow x_0$ in the forward topology if and only if $d(x_0,x_n)\!\longrightarrow 0$) may not be semi-Lipschitz according to~\eqref{otroslip}, while $-d(x_0,\cdot)$ and $d(\cdot,x_0)$ will~be~so.
\end{enumerate}
Therefore, to avoid/circumvent the above inconveniences, we shall opt for Definition~\ref{slip}. This definition, in particular, is compatible with the natural definition of a semi-Lipschitz function from a quasi-metric space $(X,d)$ towards an asymmetric normed space or normed cone $(Y,\|\cdot|)$: indeed, denoting by $d^{\|\cdot|}$ the distance associated to the asymmetric norm $\|\cdot|$, it is natural to ask
$$d^{\|\cdot|}(f(x),f(y))=\|f(y)-f(x)|\leq L\,d(x,y),$$
which coincides with our definition when taking $(Y,\|\cdot|)=(\mathbb{R},u)$, with $u$ given by $u(x)=\max\{x,0\}$ for all $x\in \mathbb{R}$. In fact, the quasi-metric space $(\mathbb{R},u)$ is involved in the definition of the dual of both an asymmetric normed space and a normed cone, and consequently, it is of great importance in this theory. Furthermore, as we shall see in Proposition~\ref{caract}, a real valued linear functional on a normed cone will belong to the {\em dual cone} (see forthcoming Definition~\ref{dual cone}) if and only if it is semi-Lipschitz according to Definition~\ref{slip}.
\end{remark}

\medskip
\begin{definition}[Asymmetric pivot space] \label{Aps} Let $(X,d)$ be a quasi-metric space and $x_0\in X$ be a base point. We define the asymmetric nonlinear dual (pivot space)
$$\mathrm{SLip}_0(X,d):=\{f\in \mathrm{SLip}(X)\text{ such that }f(x_0)=0\}.$$
In case there is no ambiguity regarding the considered quasi-metric, we simply write $\mathrm{SLip}_0(X)$.
\end{definition}
\smallskip

\begin{remark}\label{lip} \
(i). It is easy to see that $\left(\mathrm{SLip}_0(X),\|\cdot|_S\right)$ is a cancellative normed cone.\smallskip\\
(ii). Any semi-Lipschitz function on a quasi-metric space $(X,d)$ is Lipschitz on the (symmetrized) metric space $(X,D)$, where $D$ is either $d^s$ or $d^{s_0}$. Therefore, both cones of semi-Lipschitz functions $\mathrm{SLip}(X,d)$ and $\mathrm{SLip}(X,\bar{d})$ are contained in the linear space  $\mathrm{Lip}(X,D)$ of Lipschitz functions on $(X,D)$.
\end{remark}

Let $\left(\mathbb{R}, u\right)$ be the asymmetric normed space evoked in Example~\ref{sorg}. Then the asymmetric norm $u$ generates the upper topology on $\mathbb{R}$, which is the topology that characterizes upper semicontinuity in the following way: a function from a topological space $f:(X,\tau)\to (\mathbb{R},u)$ is continuous for the forward topology of $\left(\mathbb{R},u\right)$ if and only if $f$ is upper semicontinuous for the usual norm on $\mathbb{R}$ (which is the symmetrization of $u$).

\begin{example}\label{dist}
Let $(X,d)$ be a quasi-metric space with base point $x_0$. Then for each $x\in X$, the function $f(\cdot)=d(x,\cdot)-d(x,x_0)$ belongs to $\mathrm{SLip}_0(X,d)$ and satisfies $\|f|_S = 1$.
Indeed, it follows directly from the triangular inequality that $f$ is semi-Lipschitz with $\|f|_S\leq 1$. We obviously have $f(x_0)=0$. Taking $z\in X$ with $z\neq x$ we deduce $f(z)-f(x)=d(x,z)$ that is $\|f|_S=1$.
\end{example}

The previous example becomes relevant in order to define duality for normed cones and asymmetric normed spaces. The following proposition gives some insight for this duality. The proof has no essential difficulty and is included for the reader's convenience.

\begin{proposition}[Linear functionals over a normed cone]\label{caract}
Let $(C,\|\cdot|)$ be a normed cone and $\varphi:C\to \mathbb{R}$ a linear functional. Then the following are equivalent:
\begin{enumerate}\renewcommand\labelenumi{(\roman{enumi})}
\leftskip .35pc
    \item[{\rm (i)}] $\varphi$ is upper semicontinuous (in short, usc); \smallskip
    \item[{\rm (ii)}] $\varphi$ belongs to $\mathrm{SLip}_0(C,d_e)$, where $d_e$ is the (extended) quasi-metric induced by the conic-norm~$\|\cdot|$ (c.f. Proposition~\ref{extendedqm});\smallskip
    \item[{\rm (iii)}] there exists $M\geq 0$ such that $\varphi(x)\leq M\|x|$, for all $x\in C$.
\end{enumerate}
\end{proposition}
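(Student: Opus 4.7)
The plan is to cycle through the implications (iii) $\Rightarrow$ (ii) $\Rightarrow$ (i) $\Rightarrow$ (iii). Two preliminary observations will be reused throughout: first, cone-linearity (Definition~\ref{cone-linear}) applied with $\alpha = \beta = 0$ forces $\varphi(0) = 0$; second, the infimum in the definition of the extended quasi-metric $d_e$ is trivially attained at $z = x$ when $y = 0$, so $d_e(0, x) = \|x|$.

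For (iii) $\Rightarrow$ (ii), I would fix $x, y \in C$. If $d_e(y, x) = +\infty$ the semi-Lipschitz inequality is automatic; otherwise, every $z \in C$ with $x = y + z$ satisfies $\varphi(x) - \varphi(y) = \varphi(z) \leq M \|z|$ by cone-linearity and (iii), and passing to the infimum over such $z$ yields $\varphi(x) - \varphi(y) \leq M\,d_e(y,x)$. Combined with $\varphi(0) = 0$, this places $\varphi$ in $\SLip_0(C, d_e)$ with $\|\varphi|_S \leq M$. For (ii) $\Rightarrow$ (i), given $x_0 \in C$ and $\varepsilon > 0$, the forward ball $B_{d_e}(x_0, \varepsilon/(\|\varphi|_S + 1))$ is a neighborhood of $x_0$ on which the semi-Lipschitz inequality gives $\varphi(x) < \varphi(x_0) + \varepsilon$. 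Per Remark~\ref{remarkim}, this is precisely continuity of $\varphi : (C, \mathcal{T}(d_e)) \to (\mathbb{R}, u)$, i.e., upper semicontinuity.

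For (i) $\Rightarrow$ (iii), I would apply upper semicontinuity at the origin with threshold $1$: since $\varphi(0)=0$ and $d_e(0,x)=\|x|$, there exists $\delta > 0$ such that $\|x| < \delta$ implies $\varphi(x) < 1$. For $x \in C$ with $\|x| > 0$, scaling by $r = \delta/(2\|x|) > 0$ gives $\|rx| = \delta/2 < \delta$, so positive-scalar linearity yields $r\,\varphi(x) = \varphi(rx) < 1$, hence $\varphi(x) < (2/\delta)\|x|$. For $\|x| = 0$ (a case that may arise only in the conic hemi-norm setting), scaling by arbitrary $r > 0$ still gives $\|rx| = 0 < \delta$, so $r\,\varphi(x) < 1$ for every $r > 0$, forcing $\varphi(x) \leq 0 = M\|x|$. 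Thus (iii) holds with $M = 2/\delta$.

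The only subtle point I anticipate is the degenerate case $\|x| = 0$ in the last implication, handled by the unbounded-scaling argument above. Apart from that, the proof reduces to a routine interplay between cone-linearity and the infimum definition of $d_e$, and I do not expect any essential obstacle.
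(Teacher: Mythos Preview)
Your proposal is correct and follows essentially the same route as the paper: the same scaling argument for (i)~$\Rightarrow$~(iii) (including the $\|x|=0$ case), and the same infimum argument for (iii)~$\Rightarrow$~(ii). The only cosmetic difference is in (ii)~$\Rightarrow$~(i), where you give a direct neighborhood argument with forward balls while the paper uses the sequential characterization via first countability; both are equally valid and equally short.
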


\begin{proof}
Let us show that (i) implies (iii). Assume that the linear functional $\varphi$ is usc. Then there exists  $\alpha>0$ such that $\varphi\left(B(0,\alpha)\right)\subseteq (-\infty,1)$. Set $M=2/\alpha$. Then for every $x\in C$ with $\|x|\neq 0$, we have $\tilde{x}=\frac{\alpha x}{2\|x|}\in B(0,\alpha)$, hence $\varphi\left(\tilde{x}\right)<1$ and $\varphi(x)< M\|x|$. If $x\in C$ with $\|x|=0$, then for every $r>0$ we have $\|rx|=0$ and $\varphi(rx)<1$, which implies $\varphi(x)<\frac{1 }{ r}$ and
necessarily $\varphi(x)\leq 0$. \smallskip \\ 
Let us now show that (iii) implies (ii). We need to establish the inequality $\varphi(x)-\varphi(y)\leq L\, d_e(y,x),$ $\forall x,y\in C$, for some $L\geq 0$. If $d_e(y,x)=\infty$, the inequality becomes trivial. If not, then $x\in y+C$, so we can write $x=y+z$, and then $\varphi(x)-\varphi(y)=\varphi(z)\leq M\|z|.$
By taking infimum of all $z$ such that $x=y+z$, we get that $\varphi(x)-\varphi(y)\leq M d_e(y,x)$, that is, $\varphi$ is semi-Lipschitz.\smallskip \\
Let us finally assume (ii) and recall that the forward topology on $(C,\|\cdot|)$ is first countable. Then take $\{x_n\}_{n}\subseteq C$ such that $d_e(x,x_n)\to 0$. Since $\varphi$ is semi-Lipschitz, we have $\varphi(x_n)-\varphi(x)\leq Ld_e(x,x_n)$ for some $L\geq 0$, which yields that $\varphi(x)\geq\limsup \varphi(x_n)$.
\end{proof}
\smallskip
\begin{remark}\label{rem-22}
Each one of the above statements is also equivalent to $\varphi$ being lower semicontinuous (in short, lsc) for the reverse extended quasi-metric $\bar{d_e}$:\smallskip\newline  Indeed, assume there exists $M\geq 0$ such that $\varphi(x)\leq M\|x|$ for all $x\in C$, and consider a sequence $\{z_n\}_n$ and $z$ in $C$ such that $\bar{d}_e(z,z_n)\to 0$. Then $d_e(z_n,z)\to 0$, which yields the existence of a sequence $\{y_n\}_n\subset C$ such that $y_n+z_n=z$ and $\|y_n|\to 0$. Since $\varphi$ is linear, $\varphi(z)=\varphi(z_n)+\varphi(y_n)\leq \varphi(z_n) + M\|y_n|$, which yields that $\varphi$ is lsc for $\bar{d_e}$. \smallskip\newline On the other hand, if $\varphi$ is lsc for $\bar{d}_e$, an analogous argument to Proposition~\ref{caract} ((i)$\implies$(iii)) leads to the same conclusion, that is, the existence of $M\geq 0$ such that $\varphi(x)\leq M\|x|$ for all $x\in C$.
\end{remark}
\smallskip
\begin{definition}[Dual normed cone]\label{dual cone}
Let $(C,\|\cdot|)$ be a normed cone. We define the \emph{dual cone} of $C$ as
$$C^*:=\{\varphi:C\to \mathbb{R}:\,\varphi\text{ usc, linear}\}\,=\,\{\varphi\in\SLip_0(C):\,\varphi \text{ linear}\}.$$
For any $\varphi\in C^*$, the \emph{dual conic-norm} is defined by
\begin{equation*}\label{dual-conic}
\|\varphi|^*:=\sup_{\|x|\leq 1} \max\{\varphi(x),0\}=\sup_{\|x|\leq 1} \varphi(x).
\end{equation*}
\end{definition}

\noindent It is easy to check that $\|\cdot|^*$ is a conic-norm on $C^*$ (obviously $\|\varphi|^*\geq 0$, since $\varphi(0)=0$).
Moreover, if $(C,\|\cdot|)$ is a normed cone with conic-hemi-norm, then $\|\cdot|^*$ is a conic-hemi-norm on $C^*$. \medskip

The proof of the following result is reasonably simple.

\begin{proposition}\label{npro}
Let $(C,\|\cdot|)$ be a normed cone, and $\varphi\in C^*$. Then
$$\|\varphi|^*=\inf\{M>0:\varphi(x)\leq M\|x|,\mbox{ for all }x\in C\}.$$
\end{proposition}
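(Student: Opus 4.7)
The plan is to prove the two inequalities separately, denoting $A:=\|\varphi|^*=\sup_{\|x|\leq 1}\varphi(x)$ and $B:=\inf\{M>0:\varphi(x)\leq M\|x|\text{ for all }x\in C\}$. Note that $B$ is well defined (the set over which the infimum is taken is nonempty) thanks to Proposition~\ref{caract}, applied to the functional $\varphi\in C^*$.

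For the inequality $A\leq B$, I would fix any admissible $M$ in the defining set of $B$, i.e. any $M>0$ satisfying $\varphi(x)\leq M\|x|$ for all $x\in C$. Then for every $x\in C$ with $\|x|\leq 1$, one gets $\varphi(x)\leq M\|x|\leq M$, and taking the supremum over such $x$ yields $A\leq M$. Since $M$ was arbitrary in the defining set of $B$, passing to the infimum gives $A\leq B$.

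For the reverse inequality $B\leq A$, the key step is to show that the value $A$ itself belongs to the set over which $B$ is defined, namely that $\varphi(x)\leq A\|x|$ for every $x\in C$. If $\|x|>0$, then by the homogeneity of the conic-norm (axiom (iii) of Definition~\ref{defnorm}) the element $\tilde x := (1/\|x|)\cdot x$ satisfies $\|\tilde x|=1$, and by the linearity of $\varphi$ on the cone (Definition~\ref{cone-linear}) together with the definition of $A$, one obtains $\varphi(x)=\|x|\,\varphi(\tilde x)\leq A\|x|$.

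The main obstacle is the degenerate case $\|x|=0$, which can arise because $\|\cdot|$ is only assumed to be a conic (hemi-)norm on $C$, so $x$ need not be zero. Here the trick is again homogeneity combined with linearity: for any $r>0$ one has $\|rx|=r\|x|=0\leq 1$, so $\varphi(rx)=r\varphi(x)\leq A$, and letting $r\to\infty$ forces $\varphi(x)\leq 0 = A\|x|$. This establishes $\varphi(x)\leq A\|x|$ in all cases; hence $B\leq A$ (with the marginal case $A=0$ handled by observing that then the defining set of $B$ equals all of $(0,\infty)$, so $B=0$ as well). Combining both inequalities closes the proof.
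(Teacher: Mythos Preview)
Your proof is correct. The paper itself does not give a proof of this proposition (it only remarks that it is ``reasonably simple''), and your argument is exactly the natural one: bound $A\le B$ trivially, then show $A$ itself is an admissible constant via normalization when $\|x|>0$ and via the scaling trick $r\to\infty$ when $\|x|=0$, handling the borderline $A=0$ separately.
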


As in the case of normed spaces, there is a direct relation between the semi-Lipschitz constant and the dual norm of a linear functional:
\begin{corollary}[Dual conic-norm and semi-Lipschitz constant] Let\\
$(C,\|\cdot|)$ be a normed cone, and $\varphi\in C^*$. Then $\|\varphi |^*=\|\varphi|_S$ and the subcone of linear functionals of $\mathrm{SLip}_0(C)$ (linear semi-Lipschitz functions) is isometrically isomorphic to $\left(C^*,\|\cdot|^*\right)$ (linear usc functions).
\end{corollary}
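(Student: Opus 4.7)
The strategy is to establish the norm identity $\|\varphi|^{*}=\|\varphi|_S$ directly by two inequalities, and then derive the isometric isomorphism as an immediate consequence of Proposition~\ref{caract}. I would leverage Proposition~\ref{npro}, which characterizes $\|\varphi|^{*}$ as the infimum of those $M>0$ satisfying $\varphi(x)\leq M\|x|$ for all $x\in C$. This places $\|\varphi|^{*}$ and $\|\varphi|_S$ on a common footing as infima over compatible Lipschitz-type constants.

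For the inequality $\|\varphi|_S\leq\|\varphi|^{*}$: pick any $M>0$ with $\varphi(x)\leq M\|x|$ for every $x\in C$. Given $x,y\in C$ with $d_e(y,x)<\infty$, for every $z\in C$ with $x=y+z$, the linearity of $\varphi$ yields
\[
\varphi(x)-\varphi(y)\,=\,\varphi(z)\,\leq\, M\|z|.
\]
Taking the infimum over such $z$ gives $\varphi(x)-\varphi(y)\leq Md_e(y,x)$, and the inequality is trivially satisfied when $d_e(y,x)=\infty$. Hence $\|\varphi|_S\leq M$, and taking the infimum over $M$ yields $\|\varphi|_S\leq\|\varphi|^{*}$ via Proposition~\ref{npro}.

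For the reverse inequality, I take any $L>0$ with $\varphi(x)-\varphi(y)\leq Ld_e(y,x)$ for all $x,y\in C$ and specialize to $y=0$. Since $0$ is the neutral element of the cone, the only $z\in C$ satisfying $0+z=x$ is $z=x$, so $d_e(0,x)=\|x|$; moreover $\varphi(0)=0$ by linearity. Combining these, $\varphi(x)\leq L\|x|$ for every $x\in C$, so Proposition~\ref{npro} gives $\|\varphi|^{*}\leq L$, and the conclusion $\|\varphi|^{*}\leq\|\varphi|_S$ follows by taking the infimum over $L$.

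Finally, for the isometric isomorphism: Proposition~\ref{caract} already identifies, as a set, the cone of linear functionals sitting inside $\SLip_0(C,d_e)$ with $C^{*}$, since a linear functional is upper semicontinuous if and only if it belongs to $\SLip_0(C,d_e)$. The just-established equality of norms $\|\varphi|^{*}=\|\varphi|_S$ promotes this identification to an isometric isomorphism of normed cones, with the identity map serving as the witness. The proof is essentially bookkeeping; the only subtleties worth watching are the treatment of the possibly infinite value of $d_e$ (harmless, since the inequality becomes vacuous) and the identity $d_e(0,\cdot)=\|\cdot|$, both of which are resolved in one line each.
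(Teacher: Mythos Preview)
Your proof is correct and follows essentially the same approach as the paper: both directions of the norm equality are obtained via Proposition~\ref{npro} and the observation $d_e(0,x)=\|x|$, with the first inequality amounting to the step (iii)$\Rightarrow$(ii) in the proof of Proposition~\ref{caract}. The paper's version is slightly terser, applying the semi-Lipschitz inequality directly with the constants $\|\varphi|^*$ and $\|\varphi|_S$ rather than passing through arbitrary $M$ and $L$ and taking infima, but the content is identical.
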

\begin{proof}
The inequality $\|\varphi|_S\leq\|\varphi|^*$ follows from Proposition~\ref{caract} (see (ii)$\Rightarrow$(iii)). For the opposite inequality, since $\varphi$ is semi-Lipschitz and $\varphi(0)=0$ we get:
$$\varphi(x)=\varphi(x)-\varphi(0)\,\leq \,\|\varphi|_S\,d_e(0,x)=\|\varphi|_S\,\|x|,$$
yielding by Proposition \ref{npro} that $\|\varphi|^* \leq \|\varphi|_S.$
The proof is complete.
\end{proof}
\smallskip
\subsection{Duality of asymmetric normed spaces}\medskip

In this subsection we consider the particular case that the normed cone is an asymmetric normed space $(X,\|\cdot|)$. \smallskip

\begin{proposition}[Dual of a finite-dimensional space]
Let $(X,\|\cdot|)$ be an asymmetric normed space of finite dimension. Then there exists $M>0$ such that
\begin{equation}\label{infdim}
\|\!\!-\!\!x|\leq M\|x|,\ \mbox{for all }x\in X.
\end{equation}
Furthermore, $(X,\|\cdot|)^*$ is also an asymmetric normed space satisfying that for every $\varphi\in (X,\|\cdot|)^*$, $-\varphi\in (X,\|\cdot|)^*$ and $\|-\varphi|^*\leq M\|\varphi|^*$.
In particular, $(X^*,\|\cdot|)$ is a linear space (not only a normed cone).
\end{proposition}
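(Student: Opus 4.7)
The plan is to prove the finite-dimensional inequality \eqref{infdim} first, via a compactness argument on the unit sphere of the symmetrized norm, and then to derive the dual statements as direct consequences of that bound.

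For the first part, I would start by observing that $\|x\|_s := \|x| + \|\!-\!x|$ is a genuine (symmetric) norm on $X$, so its unit sphere $S := \{x \in X : \|x\|_s = 1\}$ is compact, since $X$ is finite-dimensional. Next, from the triangle inequality applied to $\|\cdot|$ and to $\|\!-\!\cdot|$ one obtains $|\|x| - \|y|| \leq \|x-y\|_s$, which shows that the map $x \mapsto \|x|$ is $1$-Lipschitz from $(X,\|\cdot\|_s)$ to $\mathbb{R}$, hence continuous. Since $\|\cdot|$ is an asymmetric norm, its zero set is reduced to $\{0\}$, so $\|x|>0$ on $S$, and by compactness the infimum $\alpha := \min_{x\in S}\|x|$ is strictly positive. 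A simple scaling then yields, for every nonzero $x\in X$, the estimate $\|x| \geq \alpha \|x\|_s = \alpha(\|x|+\|\!-\!x|)$, which gives $\|\!-\!x| \leq M\|x|$ with $M = (1-\alpha)/\alpha$.

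Armed with \eqref{infdim}, the dual statements follow almost immediately. For any $\varphi \in (X,\|\cdot|)^*$, the linear functional $-\varphi$ satisfies, for every $x \in X$,
\[
-\varphi(x) = \varphi(-x) \leq \|\varphi|^* \,\|\!-\!x| \leq M\|\varphi|^*\,\|x|,
\]
so by Proposition~\ref{caract} it is usc and belongs to $(X,\|\cdot|)^*$ with $\|\!-\!\varphi|^* \leq M\|\varphi|^*$. Combining closure of the cone $(X,\|\cdot|)^*$ under sums and positive scalar multiples with the closure under negation just established, we conclude that $(X,\|\cdot|)^*$ is stable under arbitrary real scalar multiplications, hence is a linear subspace of the algebraic dual of $X$.

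The only delicate step I anticipate is the strict positivity of $\alpha$, which depends crucially on the assumption that $\|\cdot|$ is an asymmetric norm rather than a hemi-norm: without the implication $\|x|=0\Rightarrow x=0$, the continuous function $\|\cdot|$ could vanish somewhere on $S$ and the compactness argument would break down. In fact, for a genuine hemi-norm, the existence of $x \neq 0$ with $\|x|=0$ forces $\|\!-\!x|>0$ by condition $(ii)'$, and \eqref{infdim} cannot hold for any finite $M$; thus the statement tacitly requires the norm case, which is how the argument above proceeds.
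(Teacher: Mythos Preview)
Your proof is correct. The treatment of the dual part is identical to the paper's: from \eqref{infdim} one bounds $-\varphi(x)=\varphi(-x)\leq \|\varphi|^*\|\!-\!x|\leq M\|\varphi|^*\|x|$ and concludes.

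For the inequality \eqref{infdim} itself, however, the routes differ. The paper invokes an external result (from \cite{GR2005} or \cite{BF2020}) stating that in finite dimensions all asymmetric norms inducing a $T_1$-topology are equivalent; this immediately gives that $\|\!-\!\cdot|$ and $\|\cdot|$ are comparable. Your argument is instead fully self-contained: you build the symmetrized norm $\|\cdot\|_s$, use compactness of its unit sphere, and exploit continuity of $x\mapsto\|x|$ together with the fact that it does not vanish outside the origin (this is precisely where the assumption that $\|\cdot|$ is an asymmetric norm, not a hemi-norm, enters). In effect you are proving, from first principles, the special instance of the equivalence theorem that the paper cites. Your approach is more elementary and avoids the dependency on outside literature; the paper's approach is shorter but defers the real work to the reference. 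Your closing remark on why the hemi-norm case fails is also a nice addition not present in the paper.
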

\begin{proof}
Let $B=\{x\in X:\|x|\leq 1\}$ be the unit ball of $X$. Since in finite dimensions all asymmetric norms inducing a $T_1$-topology are equivalent (see \cite[Corollary~11]{GR2005} or \cite[Theorem~3]{BF2020} for example), it follows that $B$ is closed convex and $0\in\operatorname{int}B$. Thus we can assure the existence of $M>0$ such that $\left\|\frac{-x }{ \|x|}\right|\leq M$, for all $x\in X$ with $\|x|\neq 0$, which yields
$\|\!\!-\!\!x|\leq M\|x|$, for all $x\in X$. Now, if $\varphi\in (X,\|\cdot|)^*$ then
$$-\varphi(x)=\varphi(-x)\leq \|\varphi|^*\|\!\!-\!\!x|\leq M\|\varphi|^*\|x|,\ \mbox{for all }x\in X$$
and
$$\|\!\!-\!\!\varphi|^*\left(\ =\sup_{\|x|\leq1}-\varphi(x)\ \right)\leq M\|\varphi|^*,\ \mbox{for all }\varphi\in (X,\|\cdot|)^*.$$
The proof is complete.
\end{proof}

\begin{remark}[An infinite dimensional counterexample]
If $X$ is infinite-dimensional, then \eqref{infdim} may not be fulfilled. For example, let
$$X=\{f\in\mathcal{C}([0,1]):\int_0^1f(t)\,dt=0\}$$ and $\|f|:=\max_{t\in[0,1]}\max\{f(t),0\}$. 
Consider the sequence of functions $\{f_n\}_n\subset X$ defined as
$$f_n(x)=\begin{cases}
\qquad\frac{1 }{n}\,,\smallskip&\ \mbox{if }\ 0\leq x<\frac{1}{ n^2}\\
\frac{n}{2-n^2}x+\frac{1-n^2}{2n-n^3}\,,\smallskip&\ \mbox{if }\ \frac{ 1}{ n^2}\leq x<1-\frac{ 1}{ n^2}\\
-n^3x-n(1-n^2)\,,\smallskip&\ \mbox{if }\ 1-\frac{ 1}{ n^2}\leq x\leq 1\,  \\
\end{cases}\ \ (n\in\mathbb{N}).$$
Then $\|f_n|=1/n$ for each $n\geq 2$ and $\|-f_n|=n\longrightarrow \infty$, which contradicts \eqref{infdim}. \smallskip

\noindent In addition, $X^*$ is a normed cone (and not a vector space). To see this, let $\delta_1:\mathcal{C}([0,1])\to\mathbb{R}$ be defined as $\delta_1(f)=f(1)$. Then $\{f_n\}_n\!\longrightarrow 0$, $\delta_1(f_n)=-n\longrightarrow -\infty$ and $\delta_1(0)=0$, which shows that the linear functional $\delta_1$ is not lower semicontinuous in $(X,\|\cdot|)$.
\end{remark}

\begin{remark}[Continuity of evaluation functionals] \label{remarkevacontin}
Let $(X,\|\cdot|)$ be an asymmetric normed space with dual $X^*$. For every $x\in X$, the evaluation functional
$\widehat{x}:X^*\to\mathbb{R}$ defined as $\widehat{x}(\varphi)=\varphi(x)$ is linear and $\|\cdot|^*$-continuous. Indeed, we have
$$\widehat{x}(\varphi)=\varphi(x)\leq \|\varphi|^*\|x| \quad\text{and}\quad -\widehat{x}(\varphi)=-\varphi(x)=\varphi(-x)\leq \|\varphi|^*\|\!\!-\!\!x|,$$
which yields $|\widehat{x}(\varphi)|\leq \max\{\|x|,\|\!\!-\!\!x|\}\|\varphi|^*$, {\it i.e.} $\widehat{x}$ is Lipschitz and thus continuous.
\end{remark}

\begin{lemma}[$(\mathcal{L}^1(\mathbb{R}),\|\cdot|_{1,+})^*=(\mathcal{L}^\infty_+(\mathbb{R}), \|\cdot\|_\infty)$]\label{duall1}
Let $\mathcal{L}^1(\mathbb{R})$ be endowed with the asymmetric norm $$\|f|_{1,+}:=\int_{\R} f^+d\lambda,$$
where $f^+(x)=\max\{f(x), 0\}$ and $\lambda$ denotes the Lebesgue measure. Then, the dual of $(\mathcal{L}^1(\mathbb{R}),\|\cdot|_{1,+})$ is isometrically isomorphic to $(\mathcal{L}^\infty_+(\mathbb{R}), \|\cdot\|_\infty)$, where $\mathcal{L}^\infty_+(\mathbb{R})$ denotes the cone of nonnegative functions in $\mathcal{L}^\infty (\mathbb{R})$.
\end{lemma}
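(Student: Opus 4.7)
The plan is to use the classical Riesz representation of the dual of $(\mathcal{L}^1(\mathbb{R}),\|\cdot\|_1)$ as a bridge: every $\|\cdot|_{1,+}$-usc linear functional turns out to be continuous for the symmetrized norm, which equals $\|\cdot\|_1$, and the sign condition forced by one-sidedness then pins the representing $\mathcal{L}^\infty$-function down to the non-negative cone.

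First I would verify that $\|f|_{1,+}$ is an asymmetric hemi-norm on $\mathcal{L}^1(\mathbb{R})$ whose symmetrization is the usual $\mathcal{L}^1$-norm: indeed $\|f|_{1,+}+\|\!\!-\!\!f|_{1,+}=\int f^++\int f^-=\|f\|_1$. Hence by Proposition~\ref{caract}, any $\varphi\in(\mathcal{L}^1,\|\cdot|_{1,+})^*$ satisfies $\varphi(f)\leq M\|f|_{1,+}\leq M\|f\|_1$ for some $M\geq 0$, and therefore also $-\varphi(f)=\varphi(-f)\leq M\|f\|_1$, so $\varphi$ is $\|\cdot\|_1$-continuous. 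The classical duality $(\mathcal{L}^1)^*\cong\mathcal{L}^\infty$ then yields a unique $g\in\mathcal{L}^\infty(\mathbb{R})$ with $\varphi(f)=\int fg\,d\lambda$ for every $f\in\mathcal{L}^1$.

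Next I would show that $g\geq 0$ a.e. For any measurable set $A$ with $0<\lambda(A)<\infty$, the function $f=-\mathbf{1}_A$ satisfies $\|f|_{1,+}=0$, so $\varphi(f)\leq M\|f|_{1,+}=0$, i.e., $-\int_A g\,d\lambda\leq 0$; since $A$ is arbitrary, $g\in\mathcal{L}^\infty_+(\mathbb{R})$. This is the key step that converts the asymmetric dual into the positive cone of $\mathcal{L}^\infty$. Conversely, for any $g\in\mathcal{L}^\infty_+$, the functional $T(g)(f):=\int fg\,d\lambda$ is linear and satisfies $T(g)(f)=\int f^+g-\int f^-g\leq \int f^+g\leq \|g\|_\infty\|f|_{1,+}$, so by Proposition~\ref{caract} it lies in $(\mathcal{L}^1,\|\cdot|_{1,+})^*$. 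The map $T\colon(\mathcal{L}^\infty_+,\|\cdot\|_\infty)\to(\mathcal{L}^1,\|\cdot|_{1,+})^*$ is clearly a cone-linear bijection by the previous paragraph.

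Finally I would compute the norm to confirm the isometry. The inequality above gives $\|T(g)|^*\leq \|g\|_\infty$. For the reverse inequality I test against non-negative functions: for every $f\in\mathcal{L}^1$ with $f\geq 0$ and $\|f\|_1\leq 1$ we have $\|f|_{1,+}=\|f\|_1\leq 1$, so $\|T(g)|^*\geq \sup\{\int fg:f\geq 0,\ \|f\|_1\leq 1\}=\operatorname{ess\,sup} g=\|g\|_\infty$, where the last equality uses $g\geq 0$ (approximate with normalized indicators of sets where $g$ is close to its essential supremum). The main obstacle, and the place where the asymmetry genuinely enters, is Step~2 — extracting positivity of $g$ from the one-sided bound; the rest is a straightforward application of the classical $\mathcal{L}^1$--$\mathcal{L}^\infty$ duality.
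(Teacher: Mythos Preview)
Your proposal is correct and follows essentially the same route as the paper: pass to the symmetrized norm $\|\cdot\|_1$ to invoke classical $\mathcal{L}^1$--$\mathcal{L}^\infty$ duality, use non-positive test functions (indicators) to force $g\geq 0$, and verify the isometry by testing against non-negative $\mathcal{L}^1$ functions close to the essential supremum. The only cosmetic differences are that the paper argues positivity of $g$ by contradiction with the scaled family $f_n=-n\mathbf{1}_E$, whereas you do it directly via $\int_A g\geq 0$ for all finite-measure $A$; both are equally valid.
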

\begin{proof}
The facts that $(\mathcal{L}^1(\mathbb{R}),\|\cdot|_{1,+})$ is an asymmetric normed space and $(\mathcal{L}^\infty_+(\mathbb{R}), \|\cdot\|_\infty)$ is a normed cone are straightforward. Take $\varphi\in (\mathcal{L}^1(\mathbb{R}),\|\cdot|_{1,+})^*$. Then
$\varphi:\mathcal{L}^1(\mathbb{R})\to \mathbb{R}$ is linear and $(\|\cdot|_{1,+}$-$u)$-continuous (see Example~\ref{sorg}). Then, by Remark~\ref{lip}, $\varphi$ is continuous for the symmetrized norms in both spaces, therefore
$$|\varphi(f)|\leq \|\varphi\|^* \max\Big\{ \| f^+|_{1,+},\, \,\|\!\!-\!\!f^+|_{1,+}\Big\} \leq \|\varphi\|^*\|f\|_1,$$
\noindent where $\|\cdot\|^*$ denotes the dual norm of the normed space $(\mathcal{L}^1(\mathbb{R}),(\|\cdot|_{1,+})^s)$ and $\|\cdot\|_1$ is the usual norm on $\mathcal{L}^1(\mathbb{R})$. It follows that $\varphi$ is $(\|\cdot\|_1$-$|\cdot|)$-continuous, and therefore there exists $g\in \mathcal{L}^\infty(\mathbb{R})$ such that $\varphi(f)=\int gfd\lambda$ for all $f\in \mathcal{L}^1(\mathbb{R})$. \smallskip

\noindent We claim that $g\geq 0$ almost everywhere:\smallskip \\
Indeed, suppose, towards a contradiction, that there exists a set $E$ of measure $0<\lambda (E)<\infty$ such that $g<0$ on $E$. Consider the sequence $f_n=-n\mathbf{1}_E$ (where $\mathbf{1}_E$ is the characteristic function of $E$), which clearly belongs to $\mathcal{L}^1(\mathbb{R})$. On the other hand, since $\|f_n|_{1,+}=0$ for all $n\in \mathbb{N}$, the function $f_n$ belongs to the unit ball of the asymmetric norm $\|\cdot|_{1,+}$. Then, as $n\longrightarrow +\infty$, we deduce
$$\varphi(f_n)=\int g f_n d\lambda=\int_{E^c}gf_nd\lambda +\int_E gf_nd\lambda = n\int_E (-g) d\lambda \longrightarrow +\infty.$$
Therefore, $\varphi$ can not be $(\|\cdot|_{1,+}$-$u$)-continuous, a contradiction.\medskip \\

Notice now that any $g\in \mathcal{L}^\infty_+(\mathbb{R})$ defines a linear $(\|\cdot|_{1,+}$-$u)$-continuous functional $\varphi$ in the same manner:
$$\varphi(f)=\int_{\R} g\,f\,d\lambda\leq \int_{\R} gf^+d\lambda\leq \|g\|_\infty\int_{\R} f^+=\|g\|_\infty\|f|_{1,+},$$
which yields that $\|\varphi|^*\leq \|g\|_\infty$. On the other hand, take $\varepsilon>0$ and a set $E$ of finite measure such that $g(x)\geq \|g\|_\infty-\varepsilon$ on $E$. Then consider the function $\displaystyle{f=\frac{\mathrm{sgn}(g)}{\lambda(E)}\,\mathbf{1}_E}$, where $\mathrm{sgn}(g)$ denotes the sign of $g$, and note that $\|f|_{1,+}\leq 1$. Then
$$\varphi(f)=\frac{1}{\lambda(E)}\int_E g d\lambda\geq\frac{1}{\lambda(E)}\int_E [\|g\|_\infty-\varepsilon]d\lambda= \|g\|_\infty-\varepsilon.$$
It follows that $\|\varphi|^*=\|g\|_\infty$, and therefore, we can identify the dual of $(\mathcal{L}^1(\mathbb{R}),\|\cdot|_{1,+})$ to $(\mathcal{L}^\infty_+(\mathbb{R}), \|\cdot\|_\infty)$ by an isometric isomorphism.
\end{proof}

Let us now give the following definition.
\begin{definition}[Asymmetric weak topologies]\label{weakstar}
Let $X$ be a (asymmetric) normed space with dual $X^*$.\smallskip

\noindent (i) The \emph{weak} topology $w$ on $X$ is defined as the coarsest topology for which every $\phi\in X^*$ remains upper semicontinuous.\smallskip

\noindent (ii) The \emph{weak-star} topology $w^*$ on $X^*$ is defined as the coarsest topology that makes every evaluation functional $\{\widehat{x}:X^*\to (\mathbb{R},|\cdot|),\:x\in X\}$ continuous (notice by Remark \ref{remarkevacontin} that $\widehat{x}$ is always $\|\cdot|^*$-continuous, where $\|\cdot|^*$ is the conic hemi-norm of $X^*$).
\end{definition}
\noindent Therefore the weak-star topology $w^*$ on $X^*$  is weaker than the forward $\|\cdot|^*$-topology. In what follows, we shall make use of the notation $\langle y^*,y\rangle=y^*(y)$.

\begin{lemma}\label{farkas}
Let $X$ be an asymmetric normed space with dual $X^*$, and $\varphi:X^*\to \mathbb{R}$ a linear\linebreak $w^*$-continuous functional. Then there exists $x_{\varphi}\in X$ such that $\varphi(x^*)=x^*(x_{\varphi})$ for all $x^*\in X^*$.
\end{lemma}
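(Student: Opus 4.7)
My plan is to adapt the classical Banach-space proof of this fact, which proceeds in three steps: extract finitely many test points from the $w^*$-continuity of $\varphi$, derive a kernel inclusion, and apply an algebraic lemma identifying $\varphi$ with a linear combination of evaluations. The passage to the cone setting of $X^*$ is where the subtleties arise.

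For the first step, since $\varphi(0)=0$ and $\varphi^{-1}((-1,1))$ is $w^*$-open (by $w^*$-continuity of $\varphi$), Definition~\ref{weakstar} furnishes $x_1,\ldots,x_n\in X$ and $\varepsilon>0$ such that
$$\{x^*\in X^*:|\widehat{x_i}(x^*)|<\varepsilon,\ i=1,\ldots,n\}\subseteq\varphi^{-1}((-1,1)).$$
Cone-homogeneity ($\lambda x^*\in X^*$ for $\lambda\geq 0$) then yields, by scaling, the quantitative bound $|\varphi(x^*)|\leq(2/\varepsilon)\max_i|\widehat{x_i}(x^*)|$ for every $x^*\in X^*$; in particular, $\widehat{x_i}(x^*)=0$ for all $i$ forces $\varphi(x^*)=0$ (cone kernel condition). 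I would then extend $\varphi$ and each evaluation $\widehat{x_i}$ by linearity to the vector subspace $V:=X^*-X^*$ of the algebraic dual $X^{\#}$, setting $\tilde{\varphi}(a-b):=\varphi(a)-\varphi(b)$; this is well-defined and $\mathbb{R}$-linear thanks to the cone-linearity of $\varphi$. Provided the kernel inclusion $\bigcap_i\ker(\widehat{x_i}|_V)\subseteq\ker\tilde{\varphi}$ persists on $V$, the classical algebraic lemma for linear functionals on a vector space supplies scalars $\lambda_1,\ldots,\lambda_n\in\mathbb{R}$ with $\tilde{\varphi}=\sum_i\lambda_i\widehat{x_i}$ on $V$, and hence on $X^*$. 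Setting $x_\varphi:=\sum_i\lambda_i x_i\in X$ then delivers $\varphi(x^*)=\sum_i\lambda_i x^*(x_i)=x^*(x_\varphi)$ for every $x^*\in X^*$, as required.

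The hard part will be promoting the cone kernel condition to the $V$-kernel inclusion above, since this is strictly stronger: concretely, one must show that $\varphi(a)=\varphi(b)$ whenever $a,b\in X^*$ satisfy $\widehat{x_i}(a)=\widehat{x_i}(b)$ for all $i$, even though $a-b$ need not belong to $X^*$. Elementary two-dimensional examples (with $X=\mathbb{R}^2$ equipped with $\|(u,v)|=\max\{u,0\}+\max\{v,0\}$) already show that the initially extracted witness $\{x_1,\ldots,x_n\}$ may fail to yield the $V$-kernel property. My strategy to overcome this is to enlarge the witness, adjoining further test points of $X$ chosen to detect any $z\in V$ on which $\tilde{\varphi}$ does not vanish while the $\widehat{x_i}$ do: since $V/\bigcap_i\ker\widehat{x_i}$ embeds into $\mathbb{R}^n$ via $(\widehat{x_1},\ldots,\widehat{x_n})$, and since $\tilde{\varphi}$ is linear, the quantitative bound of the first step should constrain this enlargement to terminate in finitely many steps without disrupting $w^*$-continuity. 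Alternatively, one could exploit translation invariance of the $w^*$-topology on $V$ together with the $w^*$-continuity of $\varphi$ at points other than the origin to rule out the pathological configurations directly.
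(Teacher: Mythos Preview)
Your approach is essentially the paper's: extract a finite witness from $w^*$-continuity at $0$, get a kernel inclusion on the cone $X^*$, extend linearly to $V=\mathrm{span}(X^*)$, and invoke the algebraic lemma. The paper does exactly this, but at the step you single out as ``the hard part'' it simply asserts that ``this operation preserves the inclusion~\eqref{ker} on the linear space $\mathrm{span}(X^*)$,'' without argument.

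You are right to be suspicious, and your $\mathbb{R}^2$ example is on point: with $X^*\cong\mathbb{R}^2_+$, the single witness $x_1=(1,1)$ gives $\bigcap_i\ker_{X^*}(\widehat{x_i})=\{0\}$, so the cone kernel inclusion is vacuous for every cone-linear $\varphi$; yet for $\varphi(\alpha,\beta)=\alpha$ the $V$-kernel inclusion fails (take $(1,-1)$). So the paper's displayed implication, read literally for the witness produced in the first step, is false in general. The lemma itself is true---in that example $x_\varphi=(1,0)$ works---but the paper's proof does not supply the missing bridge.

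Of your two suggested fixes, the second (exploit $w^*$-continuity at points other than the origin, or equivalently pass to the linear $w^*$-topology on $V$ and show $\tilde{\varphi}$ is continuous there) is the cleaner route; the first (iteratively enlarge the witness) would need a termination argument that is not obviously available. In either case you have located a genuine gap in the paper's argument, not in your own understanding.
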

\begin{proof}
Since $\varphi$ is  $w^*$-continuous, the set $\varphi^{-1}(-1,1)$ is a $w^*$-neighbourhood of 0, so there exist $x_1,...,x_n\in X$ such that
$$\{x^*_i\in X^*:\:\max_{i=1,...,n} |\langle x^*, x_i\rangle|< 1\}\subseteq \varphi^{-1}(-1,1),$$
\noindent which yields
\begin{equation}\label{ker}
\bigcap_{i=1}^n\mathrm{Ker}(\widehat{x}_i)\subseteq \mathrm{Ker}(\varphi).
\end{equation}
\noindent The above kernels are contained in the cone $X^*$. We can linearly extend $\varphi$ and the evaluation functionals $\widehat{x}_1,...,\widehat{x}_n$ from the normed cone $X^*$ to the linear space $\mathrm{span}(X^*)\subseteq \mathbb{R}^X$. This operation preserves the inclusion \eqref{ker} on the linear space $\mathrm{span}(X^*)$. It follows that the extension
$\widehat{x}_{\varphi}$ of $\varphi$ is a linear combination of the extensions of $\widehat{x}_1,...,\widehat{x}_n$.
\end{proof}
\medskip

The following result is analogous for the classical one in the operator theory (see~\cite[Theorem 4.10]{Rudin}).

\begin{lemma}\label{adjunto}
Let $(X,\|\cdot|_X)$, $(Y,\|\cdot|_Y)$ be asymmetric normed spaces, $X^*$ and $Y^*$ their respective dual cones and $T:Y^*\to X^*$ a linear bounded operator (meaning that there exists $K\geq 0$ such that $\|Ty^*|_Y\leq K\|y^*|_X$ for all $x\in X$). If $T$ is $(w^*$-$w^*)$-continuous, then there exists a linear bounded operator $S:X\to Y$ such that $T=S^*$, in the sense that
$$\langle y^*,Sx\rangle =\langle Ty^*,x\rangle,\quad\text{for all } x\in X\text{ and }y^*\in Y^*.$$
Furthermore, if $T$ is a bijective isometry, so is $S$.
\end{lemma}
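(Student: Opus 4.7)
The proposed strategy is the asymmetric analogue of the classical construction of the pre-adjoint (\emph{c.f.}~\cite{Rudin}), with Lemma~\ref{farkas} playing the role that the standard $w^*$-duality plays in the symmetric case. Fix $x\in X$ and consider the map $\psi_x\colon Y^*\to\mathbb{R}$ defined by $\psi_x(y^*)=\widehat{x}(Ty^*)=\langle Ty^*,x\rangle$. The evaluation $\widehat{x}$ is $w^*$-continuous on $X^*$ (Definition~\ref{weakstar} and Remark~\ref{remarkevacontin}), and $T$ is $(w^*,w^*)$-continuous by hypothesis, so $\psi_x$ is a linear $w^*$-continuous functional on $Y^*$. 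Lemma~\ref{farkas} then supplies $Sx\in Y$ with
\[
\langle y^*,Sx\rangle=\langle Ty^*,x\rangle \qquad \text{for every } y^*\in Y^*,
\]
which is exactly the relation $T=S^*$ sought.

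Linearity of $S$ will follow at once from the defining identity together with the fact that $Y^*$ separates points of $Y$, a consequence of the Hahn--Banach theorem in its asymmetric formulation. The same tool yields the bidual-type representation $\|y|_Y=\sup\{\langle y^*,y\rangle:y^*\in Y^*,\,\|y^*|^*\leq 1\}$, from which boundedness of $S$ is immediate:
\[
\|Sx|_Y=\sup_{\|y^*|^*\leq 1}\langle Ty^*,x\rangle\leq\sup_{\|y^*|^*\leq 1}\|Ty^*|^*\,\|x|_X\leq K\,\|x|_X.
\]
When $T$ is a bijective isometry, it maps the unit ball of $Y^*$ bijectively onto that of $X^*$, so the supremum above becomes $\sup_{\|x^*|^*\leq 1}\langle x^*,x\rangle=\|x|_X$, showing that $S$ is asymmetrically isometric. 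Injectivity of $S$ then follows: if $Sx=0$, then $\|x|_X=\|Sx|_Y=0$ and similarly $\|-x|_X=\|-Sx|_Y=0$, whence $x=0$.

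The step I expect to be the main obstacle is the surjectivity of $S$. The plan is: assuming $y_0\in Y\setminus S(X)$, first observe that $S(X)$ is closed in the symmetric topology of $Y$ (because $S$ is isometric and $X$ is---or can be taken to be---bi-complete), and then invoke the asymmetric Hahn--Banach theorem applied to the sublinear functional $p(y)=\inf\{\|y-Sx|_Y:x\in X\}$ to produce $y^*_0\in Y^*$ vanishing on $S(X)$ with $y^*_0(y_0)>0$. Then $\langle Ty^*_0,x\rangle=\langle y^*_0,Sx\rangle=0$ for every $x\in X$, and since the evaluation functionals $\{\widehat{x}:x\in X\}$ separate points of $X^*$, this forces $Ty^*_0=0$; injectivity of $T$ yields $y^*_0=0$, contradicting $y^*_0(y_0)>0$. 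The delicate issue in this step is precisely the asymmetric Hahn--Banach extension: we need to place the separating functional $y^*_0$ inside the nonlinear dual cone $Y^*$ rather than merely in the larger symmetrized dual $(Y,\|\cdot\|_Y^{s})^*$, and this is where the asymmetric subtleties genuinely enter.
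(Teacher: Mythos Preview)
Your construction of $S$ via Lemma~\ref{farkas}, the verification that $S^*=T$, and the proof that $S$ is bounded and norm-preserving when $T$ is a bijective isometry all follow exactly the paper's line; the paper's own proof in fact stops at the identity $\|Sx|_Y=\|x|_X$ and says nothing further about bijectivity of $S$. So on the part the paper actually proves, your argument is the same, and your treatment of injectivity and surjectivity already goes beyond it.

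Your instinct that surjectivity is the delicate point is correct, and the residual worry you flag is real. The sublinear $p(y)=\inf_{x}\|y-Sx|_Y$ does produce a separating $y_0^*\in Y^*$ provided $p(y_0)>0$; if $p(y_0)=0$ one can run the same argument with $-y_0$ (since $p(-y_0)=\inf_x\|Sx-y_0|_Y$). The case $p(y_0)=p(-y_0)=0$, however, is \emph{not} obviously excluded by symmetric closedness of $S(X)$: having $y_0$ in both the forward and backward closures of $S(X)$ does not force $y_0$ into the symmetric closure, because the approximating sequences may differ. A cleaner route, available in the paper's only application (Theorem~\ref{SlipR}), is to observe that there $T^{-1}$ is also $(w^*,w^*)$-continuous, apply the same construction to obtain $R:Y\to X$ with $R^*=T^{-1}$, and check $(SR)^*=\mathrm{id}_{Y^*}$ and $(RS)^*=\mathrm{id}_{X^*}$, whence $SR=\mathrm{id}_Y$ and $RS=\mathrm{id}_X$ since evaluations separate points.
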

\begin{proof}
Let $x\in X$, and define $f:Y^*\to \mathbb{R}$ as $f(y^*)=\widehat{x}(Ty^*)$, which is $w^*$-continuous, and therefore by Lemma \ref{farkas} there exists $y_x$ such that $\widehat{x}\circ T=\widehat{y_x}$ and $y^*(y_x)=\widehat{x}Ty^*$, and define $Sx=y_x$, which is linear and bounded, since
$$\|Sx|_Y=\|y_x|_Y=\|\widehat{y_x}|=\|\widehat{x}\circ T|=\sup_{\|y^*|\leq 1}(\widehat{x}\circ T)(y^*)\leq \|x|_X\|T|. $$
And $S^*=T$, as
$$\langle S^*y^*,x\rangle = \langle y^*,Sx\rangle=\langle \widehat{x}\circ T, y^*\rangle= \langle Ty^*,x\rangle$$
\noindent for all $x\in X$ and $y^*\in Y^*$, so $S^*=T$. Finally, if $T$ is an isometry then
$$ \|Sx|_Y=\sup_{\|y^*|\leq 1} \langle y^*,Sx\rangle=\sup_{\|y^*|\leq 1}\langle Ty^*,x\rangle=\sup_{\|y^*|\leq 1}\langle x^*,x\rangle=\sup_{\|x^*|\leq 1}\langle Ty^*,x\rangle,$$
\noindent where the first equality follows as a corollary of the Hahn-Banach theorem for asymmetric normed spaces (\cite[Corollary 2.2.4]{cobzas2012functional}).
\end{proof}

The following proposition shows that an asymmetric normed space and its bicompletion have the same dual. This fact will be relevant for our main result.
\begin{proposition}[Unique extension of a linear usc functional]\label{extension}
Let $(X,\|\cdot|)$ be an asymmetric normed space, $D\subseteq X$ a subspace that is dense in the symmetrization of the induced quasi-metric, and $\varphi:D\to \mathbb{R}$ a linear usc functional. Then $\varphi$ has a unique linear usc extension to $X$.
\end{proposition}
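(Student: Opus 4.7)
The plan is to imitate the standard extension-by-density argument for bounded linear functionals on normed spaces, with the key adjustment that we must juggle the asymmetric norm $\|\cdot|$ alongside its symmetrization. First, since $\varphi$ is linear and usc on $D$, Proposition~\ref{caract} yields a constant $M\geq 0$ with $\varphi(z)\leq M\|z|$ for every $z\in D$. Given any $x\in X$, density in the symmetrization provides a sequence $\{d_n\}_n\subseteq D$ with $\|d_n-x\|_s\to 0$, so in particular both $\|d_n-d_m|\to 0$ and $\|d_m-d_n|\to 0$ as $n,m\to\infty$. Using linearity and the bound in both directions gives
\[
\varphi(d_n)-\varphi(d_m)=\varphi(d_n-d_m)\leq M\|d_n-d_m|
\quad\text{and}\quad
\varphi(d_m)-\varphi(d_n)\leq M\|d_m-d_n|,
\]
so $|\varphi(d_n)-\varphi(d_m)|\leq M\|d_n-d_m\|_{s_0}\to 0$. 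Hence $\{\varphi(d_n)\}_n$ is Cauchy in $\R$, and I would set $\tilde\varphi(x):=\lim_n \varphi(d_n)$.

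Next, I would check that $\tilde\varphi$ is well-defined (two $\|\cdot\|_s$-approximating sequences differ by a sequence going to $0$ in the symmetrization, and the same two-sided estimate above forces the limits to coincide), and that it is linear, using that $\|\cdot\|_s$-convergence passes through sums and positive scalars. The crucial step is to obtain the bound $\tilde\varphi(x)\leq M\|x|$. For this I would exploit that the asymmetric norm is $\|\cdot\|_{s_0}$-continuous: the triangle inequality gives $\|d_n|\leq \|d_n-x|+\|x|$ and $\|x|\leq \|x-d_n|+\|d_n|$, so $|\|d_n|-\|x||\leq \|d_n-x\|_{s_0}\to 0$. Passing to the limit in $\varphi(d_n)\leq M\|d_n|$ yields $\tilde\varphi(x)\leq M\|x|$, and Proposition~\ref{caract} then implies that $\tilde\varphi$ is usc.

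For uniqueness, suppose $\tilde\varphi_1,\tilde\varphi_2$ are two linear usc extensions. Each satisfies a bound of the form $\tilde\varphi_i(z)\leq M_i\|z|$ by Proposition~\ref{caract}, hence, as in the computation above, each $\tilde\varphi_i$ is $\|\cdot\|_{s_0}$-continuous on $X$ (via the two-sided estimate $|\tilde\varphi_i(u)-\tilde\varphi_i(v)|\leq M_i\|u-v\|_{s_0}$). Since they agree on the $\|\cdot\|_s$-dense set $D$, they must agree everywhere on $X$.

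The main subtlety to watch for is the asymmetry: one cannot hope for $\|\cdot|$ alone to be continuous along $\|\cdot\|_s$-convergent sequences, but one only needs the \emph{symmetric} control $|\|d_n|-\|x||\leq \|d_n-x\|_{s_0}$, which holds thanks to the triangle inequality applied twice. Once this observation is in place, the rest of the argument runs exactly as in the classical symmetric theory.
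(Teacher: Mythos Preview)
Your proof is correct. The main difference from the paper is in the existence step: the paper simply invokes the Hahn--Banach theorem for asymmetric normed spaces \cite[Theorem~2.2.1]{cobzas2012functional} to obtain \emph{some} usc linear extension, whereas you construct the extension explicitly by density. Your route is more elementary (it avoids Hahn--Banach entirely) and has the added benefit that the extension inherits the same bound $M$ as $\varphi$, which is needed later in Proposition~\ref{dualbicomp} to show that the dual-cone norms agree. For uniqueness, the two arguments are essentially the same idea phrased differently: the paper argues that a usc linear functional is automatically lsc for the reverse quasi-metric (Remark~\ref{rem-22}) and then sandwiches $\limsup$ and $\liminf$ along a symmetrically convergent sequence, while you package this as the single Lipschitz estimate $|\tilde\varphi_i(u)-\tilde\varphi_i(v)|\leq M_i\|u-v\|_{s_0}$, which is arguably cleaner. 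One minor wording slip: you write that $\|\cdot\|_s$-convergence passes through ``positive scalars'', but since $\|\cdot\|_s$ is a genuine norm, convergence passes through \emph{all} real scalars, which is what you need for full linearity of $\tilde\varphi$.
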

\begin{proof}
Thanks to the Hahn-Banach theorem \cite[Theorem 2.2.1]{cobzas2012functional}, $\varphi$ has at least one linear usc extension to $X$. Let us assume, towards a contradiction, that $\varphi$ has two different extensions $\phi_1$ and $\phi_2$, with $\phi_1(x)<\phi_2(x)$ for some $x\in X$. Since $D$ is dense for the symmetrized extended quasi-metric ({\em c.f.} Definition~\ref{sym-dis}), there is a sequence $\{x_n\}_n\subseteq D$ such that $x_n\to x$ in both $d_e$ and $\bar{d_e}$. Since $\phi_1$ and $\phi_2$ are usc for $d_e$, we deduce that they are also lsc for $\bar{d_e}$ (see Remark~\ref{rem-22}). Moreover, both functionals coincide on the sequence $\{x_n\}_n$. We deduce:
$$\limsup_n \phi_2\left(x_n\right)\leq \phi_1(x)<\phi_2(x)\leq \liminf_n \phi_2\left(x_n\right),$$
which is a contradiction. Therefore $\phi_1=\phi_2$.
\end{proof}
\medskip
\begin{proposition}[Dual of an asymmetric normed space]\label{dualbicomp}
Let $(X,\|\cdot|)$ be an asymmetric normed space and $(\tilde{X}, \|\cdot|_\sim)$ its bicompletion. Then, the respective dual cones are isometrically isomorphic.
\end{proposition}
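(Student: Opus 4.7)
The plan is to construct a canonical map $\Phi\colon X^{\ast}\to \tilde{X}^{\ast}$ sending each usc linear $\varphi\in X^{\ast}$ to its unique usc linear extension $\tilde{\varphi}$ on $\tilde{X}$, and to show that $\Phi$ is a linear isometric isomorphism of normed cones. The main engine is Proposition \ref{extension}, applied with the dense subspace $D=\iota(X)\subseteq \tilde{X}$, where $\iota\colon X\to \tilde{X}$ is the isometric linear embedding provided by the bicompletion (linear because $X$ is already a linear space, per the uniqueness-of-bicompletion result stated just above the proposition).

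I would first verify that $\Phi$ is well defined. Given $\varphi\in X^{\ast}$, transport it by $\iota$ to a linear usc functional on the dense linear subspace $\iota(X)$ of $\tilde{X}$; Proposition \ref{extension} then produces a unique linear usc extension $\tilde{\varphi}\in \tilde{X}^{\ast}$, which I set to be $\Phi(\varphi)$. Injectivity of $\Phi$ is built into this uniqueness clause. For surjectivity, any $\tilde{\psi}\in \tilde{X}^{\ast}$ restricts to $\psi:=\tilde{\psi}\circ \iota\in X^{\ast}$ (composition of a linear usc map with a linear isometric embedding), and $\Phi(\psi)=\tilde{\psi}$ by uniqueness again. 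Positive homogeneity and additivity of $\Phi$ also follow from uniqueness: $\alpha\tilde{\varphi}+\beta\tilde{\psi}$ is a linear usc extension of $\alpha\varphi+\beta\psi$ for $\alpha,\beta\in\mathbb{R}_{+}$, so it must coincide with $\Phi(\alpha\varphi+\beta\psi)$.

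The isometry identity $\|\Phi(\varphi)|^{\ast}=\|\varphi|^{\ast}$ is the only quantitative step. The inequality $\|\varphi|^{\ast}\leq \|\tilde{\varphi}|^{\ast}$ is immediate because $\iota$ sends the unit ball of $X$ into the unit ball of $\tilde{X}$ while preserving the value of the functional. For the opposite direction, I would fix $\tilde{x}\in \tilde{X}$ with $\|\tilde{x}|_{\sim}\leq 1$ and approximate it by a sequence $\{\iota(x_{n})\}$ converging to $\tilde{x}$ in the symmetrized topology (which exists by density). Since $\iota$ is isometric, $\|x_{n}|=\|\iota(x_{n})|_{\sim}\to \|\tilde{x}|_{\sim}$, and since $\tilde{\varphi}$ is usc linear, it is Lipschitz for the symmetrization of $\|\cdot|_{\sim}$ by Proposition \ref{caract} combined with Remark \ref{lip}(ii), so $\varphi(x_{n})=\tilde{\varphi}(\iota(x_{n}))\to \tilde{\varphi}(\tilde{x})$. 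Passing to the limit in $\varphi(x_{n})\leq \|\varphi|^{\ast}\,\|x_{n}|$ gives $\tilde{\varphi}(\tilde{x})\leq \|\varphi|^{\ast}\,\|\tilde{x}|_{\sim}$, and hence $\|\tilde{\varphi}|^{\ast}\leq \|\varphi|^{\ast}$ by Proposition \ref{npro}.

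The main subtlety to watch sits at the interface between the forward-usc requirement and the symmetric-density hypothesis: one must make sure that the extension supplied by Proposition \ref{extension}, which lives in the forward-usc class, is genuinely compatible with the symmetric-topology limits used to transfer the norm bound. This is exactly where Remark \ref{rem-22} (linear usc for $d_{e}$ equals lsc for $\bar{d}_{e}$, hence continuity for the symmetric topology) does the bookkeeping that legitimizes the passages to the limit $\tilde{\varphi}(\iota(x_{n}))\to \tilde{\varphi}(\tilde{x})$. Once this is observed, every other step is a routine transcription of the classical symmetric-normed-space argument to the cone setting.
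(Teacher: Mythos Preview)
Your proof is correct and follows essentially the same strategy as the paper: both use Proposition~\ref{extension} to get the bijection and then establish the nontrivial inequality $\|\tilde{\varphi}|^{\ast}\leq \|\varphi|^{\ast}$ by approximating points of $\tilde{X}$ with symmetric-dense points of $\iota(X)$ and invoking the semicontinuity properties of linear usc functionals (Remark~\ref{rem-22}). The only cosmetic difference is that you pass to the limit in the pointwise bound $\varphi(x_n)\leq \|\varphi|^{\ast}\,\|x_n|$ (using that $\tilde\varphi$ is Lipschitz for the symmetrized norm), whereas the paper works directly with approximants in the unit ball and the lower semicontinuity of $\tilde\varphi$ for $\bar d_e$; your route avoids the small normalization step needed to force the approximants into $B_X$.
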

\begin{proof}
We already know that the extension mapping from $X^\ast$ to $\tilde{X}^\ast$ is a bijection, in virtue of Proposition~\ref{extension}. To check that it is an isometry, we only need to check that $\|\phi_{|_X}|^*\geq \|\phi|^*$ for any $\phi\in\tilde{X}^\ast$, as the reverse inequality is obvious. Let $B_{\tilde{X}}$ be the unit ball of $\tilde{X}$ for the forward distance, and consider $\phi\in\tilde{X}^\ast$ and a sequence $\{z_n\}_n$ on $B_{\tilde{X}}$ such that $\phi(z_n)\to \|\phi|^*:=\sup_{z\in B_{\tilde{X}}}\phi(z)$. Since $X$ is dense for the symmetrized topology in $\tilde{X}$ (by definition), for each $n\in \N$ there exists a sequence $\{x_n^j\}_j\subseteq B_X$ such that $\{x_n^j\}$ converges to $z_n$ in the symmetrized distance of $\tilde{X}$. In particular, $\{x_n^j\}_j$  converges for both quasi-metrics $d_e$ and $\bar{d_e}$. Since $\phi$ is lsc for $\bar{d_e}$, we have that $\phi\left(z_n\right)\leq \liminf_j \phi\left(x_n^j\right)$, for every $n\in\N$. Then, for any $\varepsilon >0$ there exists $n_0\in\mathbb{N}$ such that $\|\phi|^*<\varepsilon+\phi\left(z_{n_0}\right)$, and consequently $$\|\phi|^*<\varepsilon + \liminf_j \phi\left(x_{n_0}^j\right)\leq \varepsilon + \|\phi_{|_X}|^*.$$
This completes the proof. \end{proof}


\section{The semi-Lipschitz free space}\label{main}

Throughout this section, $(X,d)$ will denote a quasi-metric space, with $d$ being possibly a quasi-hemi-metric, and with base point $x_0\in X$.

\subsection{Construction of \texorpdfstring{$\mathcal{F}_a(X)$}{Fa(X}} 
We are ready to proceed to the construction of the (asymmetric) semi-Lipschitz free space.
For every $x\in X$ we consider the corresponding {\em evaluation} mapping
$$
\delta_x:\mathrm{SLip}_0(X)\to \mathbb{R}\,\,\, \text{defined by } \delta_x(f)=f(x),\,\,\forall f\in \SLip_0(X).
$$
Notice that $\delta_x$ is a linear mapping over the cone $\SLip_0(X)$ ({\em c.f.} Definition~\ref{cone-linear}). We can also define the linear mapping $-\delta_x$ by $-\delta_x(f):=-f(x)$, for all $f\in\SLip_0(X)$.
\begin{proposition}[$\delta_x$ belongs to the linear part of $\left(\SLip_0(X)\right)^*$]\label{deltas}
For each $x\in X$, both the evaluation functional $\delta_x:\mathrm{SLip}_0(X)\to \mathbb{R}$ and its opposite $-\delta_x$ belong to the dual cone $\left(\mathrm{SLip_0}(X), \|\cdot|_S\right)^*$.
\end{proposition}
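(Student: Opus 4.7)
The plan is to apply Proposition~\ref{caract}, which characterizes elements of a dual cone as linear functionals satisfying a bound of the form $\varphi(f)\leq M\|f|_S$. First I would verify linearity over the cone $\SLip_0(X)$ in the sense of Definition~\ref{cone-linear}: for any $\alpha,\beta\in\R_+$ and $f,g\in\SLip_0(X)$, the pointwise definition of addition and scalar multiplication gives
$$\delta_x(\alpha f+\beta g)=(\alpha f+\beta g)(x)=\alpha f(x)+\beta g(x)=\alpha\,\delta_x(f)+\beta\,\delta_x(g),$$
and the analogous identity clearly holds for $-\delta_x$. So both functionals are $\R_+$-linear on the cone.

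The boundedness condition is obtained by applying the defining semi-Lipschitz inequality of Definition~\ref{slip} in both orientations, exploiting the fact that every $f\in\SLip_0(X)$ vanishes at the base point $x_0$. Taking the pair $(x,x_0)$ in $f(\cdot)-f(\cdot)\leq\|f|_S\,d(\cdot,\cdot)$ yields $f(x)=f(x)-f(x_0)\leq\|f|_S\,d(x_0,x)$, so $\delta_x(f)\leq d(x_0,x)\,\|f|_S$. Taking instead the pair $(x_0,x)$ yields $-f(x)=f(x_0)-f(x)\leq\|f|_S\,d(x,x_0)$, so $(-\delta_x)(f)\leq d(x,x_0)\,\|f|_S$. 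In view of the implication (iii)$\Rightarrow$(i) of Proposition~\ref{caract}, both $\delta_x$ and $-\delta_x$ are linear and upper semicontinuous on $\left(\SLip_0(X),\|\cdot|_S\right)$, hence belong to its dual cone, with the explicit bounds
$$\|\delta_x|^{*}\leq d(x_0,x)\qquad\text{and}\qquad \|\!\!-\!\!\delta_x|^{*}\leq d(x,x_0).$$

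There is no real obstacle; the argument amounts to unpacking Definition~\ref{slip} twice. What the statement is really highlighting, beyond the bare membership, is that $\delta_x$ lies in the \emph{linear part} of the dual cone (i.e.\ the subspace of elements admitting an additive inverse). This is a substantive observation, since the dual cone of a normed cone need not be a vector space in general, and the two (potentially different) upper bounds above reflect exactly the asymmetry of $d$: $\delta_x$ is simultaneously bounded by $d(x_0,x)$ and its opposite by $d(x,x_0)$, so that both $\delta_x$ and $-\delta_x$ are upper semicontinuous even though their semi-Lipschitz constants may disagree.
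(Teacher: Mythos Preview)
Your proof is correct and follows essentially the same approach as the paper: both arguments use the semi-Lipschitz inequality at the pair $(x,x_0)$ (respectively $(x_0,x)$) together with $f(x_0)=0$ to obtain the bounds $\delta_x(f)\leq d(x_0,x)\|f|_S$ and $(-\delta_x)(f)\leq d(x,x_0)\|f|_S$, which is exactly condition~(iii) of Proposition~\ref{caract}. The paper's version is more terse (it takes linearity as evident and does not cite Proposition~\ref{caract} explicitly), but the substance is identical.
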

\begin{proof}
Let $x\in X$. Since $\delta_x$ is linear, we only need to check that it is bounded from above on the unit ball of $\SLip_0(X)$. Indeed, for any $f\in \mathrm{SLip}_0(X)$, we have $f(x)=f(x)-f(x_0)\,\leq \,d(x_0,x)\,\|f|_S$, therefore $\delta_x\in \mathrm{SLip_0}(X)^*$. Using the same argument, we get that $-f(x)\leq d(x,x_0)\,\|f|_S$.
\end{proof}
\begin{remark}
The fact that both $\delta_x$ and $-\delta_x$ are semi-Lipschitz yields that $\delta_x$ is actually a Lipschitz function on $\left(\SLip_0(X), \|\cdot|\right)$ of constant $\|\delta_x\|_{\mathrm{Lip}} = \max\{d(x,x_0), d(x_0,x)\}$.
\end{remark}
\medskip
\begin{proposition} [Isometric injection of $X$ into $\SLip_0(X)^*$]\label{isoinj}
The mapping $$\delta:(X,d)\to \left(\mathrm{SLip}_0(X)^*,\|\cdot|^*\right),$$ defined by $\delta(x)=\delta_x$ is (injective and) an \emph{isometry} onto its image. Therefore, for any $x,y\in X$, we have:
$$d(x,y)=\|\delta_y-\delta_x|^{*}.$$
\end{proposition}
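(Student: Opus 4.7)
The plan is to prove the isometry identity by establishing the two inequalities $\|\delta_y - \delta_x|^* \leq d(x,y)$ and $\|\delta_y - \delta_x|^* \geq d(x,y)$ separately, and then to obtain injectivity as an easy corollary.

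For the upper bound, I would unfold the definition of the dual conic-norm $\|\cdot|^*$ on the cone $\SLip_0(X)^*$. Given any $f \in \SLip_0(X)$ with $\|f|_S \leq 1$, Definition~\ref{slip} together with Proposition~\ref{SLcri} yields
\[
(\delta_y - \delta_x)(f) \;=\; f(y) - f(x) \;\leq\; \|f|_S\, d(x,y) \;\leq\; d(x,y),
\]
so taking the supremum over the unit ball of $\SLip_0(X)$ immediately gives $\|\delta_y - \delta_x|^* \leq d(x,y)$. Note that Proposition~\ref{deltas} guarantees that $\delta_y - \delta_x$ actually belongs to the dual cone, so this supremum is well defined.

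For the lower bound, I would exhibit a witness $f \in \SLip_0(X)$ with $\|f|_S \leq 1$ that attains the value $d(x,y)$ at the pair $(x,y)$. The natural choice, already computed in Example~\ref{dist}, is
\[
f(\cdot) \;=\; d(x,\cdot) - d(x,x_0).
\]
Example~\ref{dist} gives both $f(x_0)=0$ and $\|f|_S = 1$, so $f$ lies in the unit ball of $\SLip_0(X)$. Evaluating, we get $(\delta_y - \delta_x)(f) = f(y) - f(x) = d(x,y) - d(x,x) = d(x,y)$, whence $\|\delta_y - \delta_x|^* \geq d(x,y)$. Combining the two inequalities yields the isometry identity.

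Finally, injectivity follows for free: if $\delta_x = \delta_y$, then $\|\delta_y - \delta_x|^* = \|\delta_x - \delta_y|^* = 0$, hence $d(x,y) = d(y,x) = 0$ by what was just proved, and condition (ii) (or (ii)$'$ in the quasi-hemi-metric case) of Definition~\ref{deftop} forces $x=y$. There is no real obstacle here: the whole argument is a direct consequence of the semi-Lipschitz criterium and the ready-made witness from Example~\ref{dist}; the only point that deserves care is distinguishing the quasi-metric from the quasi-hemi-metric case in the injectivity step, but both are handled uniformly once the isometry identity is in hand.
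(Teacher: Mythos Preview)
Your proposal is correct and follows essentially the same approach as the paper: both establish the isometry via the two inequalities using the semi-Lipschitz definition for the upper bound and the witness $f(\cdot)=d(x,\cdot)-d(x,x_0)$ from Example~\ref{dist} for the lower bound. The only cosmetic difference is ordering---the paper proves injectivity first via the same witness functions, whereas you derive it afterwards as an immediate corollary of the isometry identity, which is arguably tidier.
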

\begin{proof}
Let us take $x,y\in X$.
First of all, it is worth noting that the quasi-metric generated by the conic-norm is extended (Proposition \ref{extendedqm}) and that $\|\delta_y-\delta_x|^{*}$ is well defined (by Proposition~\ref{deltas}). Note also that any dual cone is cancellative, since it is contained in a linear space of real-valued functions.
To prove injectivity of $\delta$, consider $x,y\in X$ such that $\delta_x=\delta_y$. Then we take the functions $f(\cdot)=d(x,\cdot)-d(x,x_0)$ and $g(\cdot)=d(y,\cdot)-d(y,x_0)$. Since $\delta_x(f)=\delta_y(f)$, and $\delta_x(g)=\delta_y(g)$, we conclude that both $d(x,y)$ and $d(y,x)$ must be zero, therefore $x=y$ (Definition~\ref{deftop}(iii)). \smallskip

\noindent By Remark~\ref{excl}(ii), for any $x,y\in X$ we have that $d_e\left(\delta_x,\delta_y\right)=\|\delta_y-\delta_x|^*$. Then, for any $x,y\in X$,
\begin{align*}
d_e\left(\delta_x,\delta_y\right)=\sup_{\|f|_S\leq1}\left(\delta_y-\delta_x\right)(f)&=\sup_{\|f|_S\leq1}\big\{f(y)-f(x)\big\}\\
&\leq \sup_{\|f|_S\leq1}\|f|_S\,d(x,y)=d(x,y).
\end{align*}
Conversely, by taking $f(\cdot)=d(x,\cdot)-d(x,x_0)$ it follows, as in Example~\ref{dist}, that
$$f(y)-f(x)=d(x,y)\ \text{and}\ f(y)-f(x)=(\delta_y-\delta_x)(f)\leq\|\delta_y-\delta_x|_*=d_e(\delta_x,\delta_y).$$
Then the result holds.
\end{proof}
\medskip
\noindent We now take the asymmetric normed space $\left(\mathrm{span}\left(\delta(X)\right),\|\cdot|^*\right)$ (which is contained in the normed cone
$\left(\mathrm{SLip}_0(X),\|\cdot|^*\right)$), and we define the (asymmetric) \emph{semi-Lipschitz free space} to be the bicompletion of $\left(\mathrm{span}\left(\delta(X)\right),\|\cdot|^*\right)$.

\begin{definition}[The semi-Lipschitz free space]\label{free}
Let $(X,d)$ be a quasi-metric space with base point~$x_0$. The \emph{semi-Lipschitz free space over }$(X,d)$, denoted by $\mathcal{F}_a(X)$, is the (unique) bicompletion of the asymmetric normed space $\left(\mathrm{span}\left(\delta(X)\right),\|\cdot|^*\right)$, where $\|\cdot|^*$ is the restriction of the norm of $\mathrm{SLip}_0(X)^*$.
\end{definition}
\medskip
\noindent We are now ready to establish our main result which is analogous of the fundamental property of the Lipschitz-free space of a metric space: being a predual of the space of Lipschitz functions vanishing at the base point.
\medskip
\begin{theorem}[$\F_a(X)^*=\SLip_0(X)$]\label{predual}
Let $(X,d)$ be a quasi-metric space with base point $x_0$. Then the dual cone of $\mathcal{F}_a(X)$ is isometrically isomorphic to $\mathrm{SLip}_0(X)$.
\end{theorem}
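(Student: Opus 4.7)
The plan is to define an explicit cone morphism $\Psi:\SLip_0(X)\to\F_a(X)^*$ and verify it is an isometric bijection. By Proposition~\ref{dualbicomp}, it suffices to identify the dual cone of $(\mathrm{span}(\delta(X)),\|\cdot|^*)$ with $\SLip_0(X)$, and then transport the identification to the bicompletion. For $f\in\SLip_0(X)$, I set
\[
\Psi(f)(\mu):=\mu(f), \qquad \mu\in\mathrm{span}(\delta(X))\subseteq\SLip_0(X)^*,
\]
interpreting each $\mu$ as a linear functional on the cone $\SLip_0(X)$. The cone linearity of every such $\mu$ (Definition~\ref{cone-linear}) directly gives additivity and positive homogeneity of $\Psi$ in the variable $f$, so $\Psi$ is a cone morphism.

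Next I would prove the isometry $\|\Psi(f)|^*=\|f|_S$. For the upper bound, if $\mu\in\mathrm{span}(\delta(X))$ with $\|\mu|^*\le 1$, then by Definition~\ref{dual cone} and positive homogeneity, $\Psi(f)(\mu)=\mu(f)\le\|\mu|^*\,\|f|_S\le\|f|_S$. For the matching lower bound I invoke Proposition~\ref{isoinj}, which yields $\|\delta_x-\delta_y|^*=d(y,x)$; then for any pair with $d(y,x)>0$ the element $\mu:=(\delta_x-\delta_y)/d(y,x)$ has $\|\mu|^*\le 1$ and $\Psi(f)(\mu)=(f(x)-f(y))/d(y,x)$, whose supremum is exactly $\|f|_S$ by Proposition~\ref{SLcri}. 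Because $\Psi(f)$ is linear and bounded on $\mathrm{span}(\delta(X))$ (hence upper semicontinuous), Proposition~\ref{extension} furnishes a unique norm-preserving extension to $\F_a(X)$, so $\Psi$ takes values in $\F_a(X)^*$ and is isometric.

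For surjectivity, given $\varphi\in\F_a(X)^*$, I define $f:X\to\R$ by $f(x):=\varphi(\delta_x)$. Since every $g\in\SLip_0(X)$ vanishes at $x_0$, the element $\delta_{x_0}$ is the zero functional in $\SLip_0(X)^*$, whence $f(x_0)=0$. Applying Proposition~\ref{isoinj} once more,
\[
f(x)-f(y)=\varphi(\delta_x-\delta_y)\le\|\varphi|^*\,\|\delta_x-\delta_y|^*=\|\varphi|^*\,d(y,x),
\]
so $f\in\SLip_0(X)$ with $\|f|_S\le\|\varphi|^*$. By linearity, $\Psi(f)$ and $\varphi$ coincide on $\mathrm{span}(\delta(X))$, which is dense in $\F_a(X)$ for the symmetrized distance; the uniqueness clause of Proposition~\ref{extension} then forces $\Psi(f)=\varphi$ on the whole bicompletion.

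The main conceptual care needed is that $\SLip_0(X)$ is only a cone, not a linear space, so an element of $\mathrm{span}(\delta(X))$ is a genuine difference of evaluations that must nevertheless be read as a cone-linear functional on $\SLip_0(X)$; the identity $\|\delta_x-\delta_y|^*=d(y,x)$ of Proposition~\ref{isoinj}, which encodes the asymmetry, is precisely what balances the two bounds in the isometry step. Apart from this, the argument is a direct transcription of the classical Godefroy--Kalton duality to the asymmetric conic setting, with Proposition~\ref{extension} ensuring that nothing is lost in passing to the bicompletion.
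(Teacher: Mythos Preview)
Your proof is correct and follows essentially the same route as the paper's: both reduce via Proposition~\ref{dualbicomp} to the dual of $(\mathrm{span}(\delta(X)),\|\cdot|^*)$, define the same evaluation map $f\mapsto(\mu\mapsto\mu(f))$, verify the isometry by bounding above with the dual-norm inequality and below with molecules $(\delta_x-\delta_y)/d(y,x)$ via Proposition~\ref{SLcri}, and obtain surjectivity by composing an arbitrary $\varphi$ with $\delta$. Your explicit appeal to Proposition~\ref{extension} for the passage to the bicompletion is slightly more detailed than the paper's, but the argument is the same.
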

\begin{proof}
Thanks to Proposition~\ref{dualbicomp}, we only need to check that the dual cone of
$\left(\mathrm{span}\left(\delta(X)\right),\|\cdot^*\right)$ is isometrically isomorphic to $\mathrm{SLip}_0(X)$. To this end, we define the mapping
$$\Phi:\mathrm{SLip}_0(X)\to \left(\mathrm{span}\left(\delta(X)\right),\|\cdot|^*\right)^*,$$
with $$\Phi(f)\left(\sum_i\lambda_i\,\delta_{x_i}\right)=\sum_i\lambda_if\left(x_i\right)$$ for any linear combination of evaluation functionals. First, we check that $\Phi$ is well defined: $\Phi$ is obviously linear, so let us demonstrate condition (iii) of Proposition~\ref{caract}. For any $f\in \mathrm{SLip}_0(X)$ and any\linebreak $\sum_i\lambda_i\,\delta_{x_i}\in \mathrm{span}\left(\delta(X)\right)$, we have
$$\Phi(f)\left(\sum_i\lambda_i\delta_{x_i}\right)=\sum_i\lambda_if(x_i)=\left(\sum_i\lambda_i\delta_{x_i}\right)(f)\,\leq\, \left\|\sum_i\lambda_i\delta_{x_i}\right|^{*}\,\|f|_S.$$
Therefore $\|f|_S\geq \|\Phi(f)|^{**}$, where $\|\cdot|^{**}$ is the norm on $\left(\mathrm{span}\left(\delta(X)\right),\|\cdot|^*\right)^*$. Conversely, consider $f\in\mathrm{SLip}_0(X)$. Then, by Proposition~\ref{SLcri}, we have
\begin{align*}
\|f|_S &= \sup_{d(y,x)>0} \frac{ \max\,\{f(x)-f(y), 0\}}{d(y,x)}\\
&=\sup_{d(y,x)>0}\frac{\max\{\Phi(f)\left(\delta_x-\delta_y\right), 0\}}{\|\delta_x-\delta_y|^*}\leq \|\Phi(f)|^{**},
\end{align*}
\noindent from which we deduce that $\Phi$ is an isometry. Since $\Phi$ is obviously linear and injective, it remains only to establish surjectivity. This follows from the fact that any $\varphi \in \left(\mathrm{span}\left(\delta(X)\right),\|\cdot|^*\right)^*$ can be seen as
$\Phi\left(\varphi\circ\delta\right)$, with $\varphi\circ\delta$ being semi-Lipschitz on $X$: indeed, for every $x,y\in X$ we have:
$$\varphi\left(\delta(x)\right)-\varphi\left(\delta(y)\right)=\varphi\left(\delta_x-\delta_y\right)\leq \|\varphi|^{**}\|\delta_x-\delta_y|^*=\|\varphi|^{**}\, d(y,x).$$
This shows that $\varphi\circ\delta$ belongs to $\SLip_0(X)$ and $\Phi$ is surjective.
\end{proof}
\medskip
\begin{remark}[Compatibility with the classical theory of metric free spaces]
If $(X,d)$ is a metric space, then $\mathrm{SLip}_0(X)=\mathrm{Lip}_0(X)$. Moreover, every linear usc functional on a normed space is continuous; thus, the dual cone of a normed linear space is the same as the usual dual. We deduce that $\mathcal{F}_a(X)=\mathcal{F}(X)$.
\end{remark}
\medskip
\begin{remark}
For a quasi-metric space $(X,d)$, it is easy to check that the space of semi-Lipschitz functions for the reverse quasi-metric $\mathrm{SLip}_0(X,\bar{d})$ is exactly $-\mathrm{SLip}_0(X,d)$, and that $\|f|_S=\|\!\!-\!\!f|_{\bar{S}}$ for any ${f\in \mathrm{SLip}_0(X,d)}$, where $\|\!\!-\!\!f|_{\bar{S}}$ denotes the semi-Lipschitz constant of $-f$ on $(X,\bar{d})$. Using this isometry, we can identify the dual cones of $\mathrm{SLip}_0(X,\bar{d})$ by the isometry $\Psi$ defined by $\Psi(\mu)(f)=\mu(\!-\!f)$ for all $f\in \mathrm{SLip}_0(X,d)$, and therefore we obtain that $\mathcal{F}_a(X,d)=\Psi(\mathcal{F}_a(X,\bar{d}))$ and that $\|\Psi(\mu)|^*_{\bar{d}}=\|\!\!-\!\!\mu|^*$, where $\|\cdot|^*_{\bar{d}}$ is the norm of $\mathcal{F}_a(X,\bar{d})$.
\end{remark}\medskip



\subsection{Relation with molecules}

Given $(X,d)$ a quasi-metric space (always with a base point $x_0\in X$), we next give a description of the closed unit ball of $\mathcal{F}_a(X)$ by means of the semi-Lipschitz evaluation functionals (often called molecules)
$$M_{(x,y)}=\frac{\delta(x)-\delta(y) }{ d(y,x)},$$
where $x,y\in X$ such that $d(y,x)>0$. Let $\widehat{\mathcal{M}}_{X}$ denote the set $\{M_{(x,y)}:x,y\in X\ \mbox{with }d(y,x)>0\}$.

Before going to this, if $(X,d)$ is an asymmetric locally convex space,  it is worth noting that the \emph{asymmetric polar} of a subset $Y\subset X$ in the case of the asymmetric dual $X^{*}$ can be defined as \cite[p. 161]{cobzas2012functional}
$$Y^{\alpha}=\{\varphi\in X^{*}:\varphi(y)\leq 1,\ \mbox{for all } y\in Y\}.$$
Analogously, we can define the \emph{asymmetric polar} of a subset $W$ of the dual $X^{*}$ by \cite[p. 165]{cobzas2012functional}
$$W_{\alpha}=\{x\in X:\varphi(x)\leq 1,\ \mbox{for all } \varphi\in W\}.$$

\begin{proposition}\label{unitball}
Let $(X,d)$ be a quasi-metric space with base point $x_0$. The closed unit ball of $\mathcal{F}_a(X)$ coincides with $\left(\{M_{(x,y)}: x,y\in X:d(y,x)>0\}^{\alpha}\right)_{\alpha}.$
\end{proposition}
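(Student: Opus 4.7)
The plan is a bipolar-type argument. I would first identify the polar $\widehat{\mathcal{M}}_X^\alpha$ with the closed unit ball of $\SLip_0(X) = \mathcal{F}_a(X)^*$, and then recover the closed unit ball of $\mathcal{F}_a(X)$ as the pre-polar, via Hahn--Banach duality for asymmetric normed spaces.

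For the first step, by Theorem~\ref{predual} any $\varphi \in \mathcal{F}_a(X)^*$ corresponds isometrically to a unique $f = \varphi \circ \delta \in \SLip_0(X)$ (with $\|\varphi|^{**} = \|f|_S$). For $x,y \in X$ with $d(y,x) > 0$, linearity of $\varphi$ yields
\begin{equation*}
\varphi(M_{(x,y)}) \;=\; \frac{\varphi(\delta_x) - \varphi(\delta_y)}{d(y,x)} \;=\; \frac{f(x)-f(y)}{d(y,x)}.
\end{equation*}
By Proposition~\ref{SLcri}, the inequality $\varphi(M_{(x,y)}) \leq 1$ holding for all such pairs is equivalent to $\|f|_S \leq 1$. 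Consequently,
\begin{equation*}
\widehat{\mathcal{M}}_X^\alpha \;=\; \{\varphi \in \mathcal{F}_a(X)^* : \|\varphi|^{**} \leq 1\},
\end{equation*}
that is, $\widehat{\mathcal{M}}_X^\alpha$ is the closed unit ball of the dual cone.

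For the second step, I would invoke the Hahn--Banach theorem for asymmetric normed spaces (\cite[Corollary~2.2.4]{cobzas2012functional}), which provides for each $\mu \in \mathcal{F}_a(X)$ a norming functional $\varphi \in \mathcal{F}_a(X)^*$ with $\|\varphi|^{**} \leq 1$ and $\varphi(\mu) = \|\mu|_{\mathcal{F}_a(X)}$, giving the duality formula
\begin{equation*}
\|\mu|_{\mathcal{F}_a(X)} \;=\; \sup_{\|\varphi|^{**} \leq 1} \varphi(\mu).
\end{equation*}
Combining this with the first step, $\mu \in (\widehat{\mathcal{M}}_X^\alpha)_\alpha$ iff $\varphi(\mu) \leq 1$ for every $\varphi$ in the unit ball of $\mathcal{F}_a(X)^*$, iff $\|\mu|_{\mathcal{F}_a(X)} \leq 1$, which is exactly the closed unit ball of $\mathcal{F}_a(X)$.

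The main obstacle I anticipate is ensuring that the duality formula in the second step is applicable to arbitrary elements of the bicompletion $\mathcal{F}_a(X)$, rather than only to finite linear combinations of Dirac evaluations lying in $\mathrm{span}(\delta(X))$. This is handled by Proposition~\ref{dualbicomp}: the dual cone of the bicompletion coincides isometrically with the dual cone of $\mathrm{span}(\delta(X))$, so the Hahn--Banach argument applies verbatim on $\mathcal{F}_a(X)$.
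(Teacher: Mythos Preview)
Your proposal is correct and follows essentially the same bipolar argument as the paper: identify $\widehat{\mathcal{M}}_X^\alpha$ with the closed unit ball of $\mathcal{F}_a(X)^*\simeq\SLip_0(X)$ via Proposition~\ref{SLcri}, then recover $B_{\mathcal{F}_a(X)}$ as the pre-polar. The only minor difference is that you invoke Hahn--Banach to obtain the duality formula $\|\mu|_{\mathcal{F}_a(X)}=\sup_{\|\varphi|^{**}\leq 1}\varphi(\mu)$, whereas the paper simply reads this off from the \emph{definition} of the norm on $\mathcal{F}_a(X)$ (it is the restriction of the dual conic-norm of $\SLip_0(X)^*$, which is already $\sup_{\|f|_S\leq 1}\gamma(f)$); your extra care about the bicompletion via Proposition~\ref{dualbicomp} is appropriate but likewise not strictly necessary for the same reason.
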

\begin{proof}
Let $B_{\mathrm{SLip}_0(X)}$, $B_{\F_a(X)}$ and $B_{\F_a(X)^{*}}$ denote respectively the closed unit balls of $\mathrm{SLip}_0(X)$, $\F_a(X)$ and $\F_a(X)^{*}$, and consider
the isometry
$\Phi:\mathrm{SLip}_0(X)\to \left(\mathrm{span}\left(\delta(X)\right),\|\cdot|^*\right)^*$ defined in the proof of Theorem~\ref{predual} as $$\Phi(f)\left(\sum_i\lambda_i\delta_{x_i}\right)=\sum_i\lambda_if\left(x_i\right)$$ for any linear combination of evaluation functionals. If $f\in \mathrm{SLip}_0(X)$, the condition $\|f|_S\leq 1$ is equivalent to $\frac{f(x)-f(y) }{ d(y,x)}\leq 1,\ \mbox{for all } x,y\in X$ with $d(y,x)>0$ (by Proposition~\ref{SLcri}). Since $\Phi$ is an isometry, $\|f|_S\leq 1$ also yields $\Phi(f)(M_{(x,y)})\leq 1$, for all $M_{(x,y)}\in \widehat{\mathcal{M}}_{X}$.
Hence
\begin{align*}
B_{\F_a(X)^{*}}&=
\{\Phi(f):f\in \mathrm{SLip}_0(X),\Phi(f)(M_{(x,y)})\leq 1,\ \forall M_{(x,y)}\in \widehat{\mathcal{M}}_{X}\}\\
&=\{F\in \F_a(X)^*:F(M_{(x,y)})\leq 1,\ \forall M_{(x,y)}\in \widehat{\mathcal{M}}_{X}\}=(\widehat{\mathcal{M}}_{X})^{\alpha}
\end{align*}
and thus
$$\Phi(B_{\mathrm{SLip}_0(X)})_{\alpha}=\left((\widehat{\mathcal{M}}_{X})^{\alpha}\right)_{\alpha}.$$
Moreover,
\begin{align*}
\left((\widehat{\mathcal{M}}_{X})^{\alpha}\right)_{\alpha}&=\Phi(B_{\mathrm{SLip}_0(X)})_{\alpha}\\
&=\{\gamma\in \F_a(X):\Phi(f)(\gamma)\leq 1,\ \forall f\in B_{\mathrm{SLip}_0(X)}\}\\
&=\{\gamma\in \F_a(X):\gamma(f)\leq 1,\ \forall f\in B_{\mathrm{SLip}_0(X)}\}\\
&=\{\gamma\in \F_a(X):\|\gamma|^{*}(\ =\sup_{\|f|_S\leq 1}\gamma(f)\ )\leq 1\}=B_{\F_a(X)}.
\end{align*}
The proof is complete.
\end{proof}

\begin{remark}\label{rem-AE}
Let $(X,d)$ be a quasi-metric space and $\overline{x}\notin X$. Then setting $\tilde{X}=X\cup \{\overline{x}\}$ and extending $d$ from $X\times X$ to $\tilde{X}\times \tilde{X}$ by $\tilde{d}(x,\overline{x})=\tilde{d}(\overline{x},x)=1$ and $\tilde{d}(\overline{x},\overline{x})=0$, we obtain a new quasi-metric space $(\tilde{X},\tilde{d})$ with base point $x_0\equiv\overline{x}$. Then the above construction will correspond to an asymmetric version of the Arens-Eells approach ({\em c.f.} \cite{AE1956}).
\end{remark}

\medskip

\subsection{Relation with asymmetrizations}\label{sub3.3}

Let $X=(X,D)$ be a metric space with a base point $x_{0}\in X$ and denote by
\begin{equation*}
L=(\mathrm{Lip}_{0}(X,D),\Vert \cdot \Vert _{L})  \label{eq:L}
\end{equation*}
its nonlinear dual. Let $P\subseteq L$ be a cone satisfying~\eqref{eq:P}, that is, for every $\phi\in L$ there exists $\phi _{1},\phi _{2}\in P$ such that $\phi =\phi _{1}-\phi _{2}$ and $\max \,\left\{ ||\phi _{1}||_{L},||\phi _{2}||_{L}\right\} \,\leq \,||\phi
||_L\,\leq \,||\phi _{1}||_{L}+||\phi _{2}||_{L}.$ Let us denote by $D_{P}$ the $P$-asymmetrization of $X$ (\textit{c.f.} Definition~\ref{canoasym}). We also denote by
\begin{equation*}
SL=(\mathrm{SLip}_{0}(X,D_{P}),\Vert \cdot |_{S})  \label{eq:SL}
\end{equation*}
the nonlinear asymmetric dual of $(X,D_P)$, that is, the normed cone of semi-Lipschitz
functions on~$(X,D_{P})$.

\begin{lemma}[Isometric injection of $P$ into $SL$]
\label{isoinjec} For every metric space $(X,D)$ and every $P$-asymmetrization $(X,D_{P})$:\smallskip \newline
{\rm (i)} there exists an isometric injection of $P$ into $SL$; \smallskip
\newline
{\rm (ii)} there is a non-expansive injection of $SL$ into $L$.
\end{lemma}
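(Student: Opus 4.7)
The plan is to verify that the natural set-theoretic inclusions $P\hookrightarrow SL$ and $SL\hookrightarrow L$ are, respectively, an isometry and a non-expansion. The two ingredients I will repeatedly use are: (a) a \emph{self-testing} property of the supremum defining $D_P$, namely that for $\phi\in P$ with $\|\phi\|_L>0$, substituting the normalized competitor $\psi=\phi/\|\phi\|_L\in P$ yields
$$D_P(y,x)\;\geq\;\frac{\phi(x)-\phi(y)}{\|\phi\|_L};$$
and (b) the trivial pointwise comparison $D_P\leq D$, which holds because every $\phi\in P$ with $\|\phi\|_L\leq 1$ is an admissible competitor in the supremum that defines $D(x,y)$ as well.

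For (i), rewriting (a) gives $\phi(x)-\phi(y)\leq\|\phi\|_L\,D_P(y,x)$ for all $x,y\in X$. This simultaneously shows that $\phi$ is semi-Lipschitz on $(X,D_P)$ (so the inclusion $\iota:P\to SL$ is well defined) and delivers the bound $\|\phi|_S\leq\|\phi\|_L$. For the reverse direction I would combine (a) and (b): for any $x,y$ with $\phi(x)>\phi(y)$, ingredient (a) ensures $D_P(y,x)>0$, and ingredient (b) yields
$$\frac{\phi(x)-\phi(y)}{D_P(y,x)}\;\geq\;\frac{\phi(x)-\phi(y)}{D(x,y)}.$$
Passing to the supremum and invoking Proposition~\ref{SLcri} together with the classical characterization $\|\phi\|_L=\sup_{\phi(x)>\phi(y)}(\phi(x)-\phi(y))/D(x,y)$ (valid because $D$ is symmetric) yields $\|\phi|_S\geq\|\phi\|_L$. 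The degenerate case $\|\phi\|_L=0$ reduces to $\phi\equiv0$ thanks to $\phi(x_0)=0$, and is trivial.

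For (ii), the map is again the set-theoretic inclusion; I only need to check that each semi-Lipschitz function on $(X,D_P)$ is Lipschitz on $(X,D)$ with at most the same constant. Given $g\in SL$ with $\|g|_S=L$, semi-Lipschitzness combined with ingredient (b) produces both
$$g(x)-g(y)\leq L\,D_P(y,x)\leq L\,D(x,y)\quad\text{and}\quad g(y)-g(x)\leq L\,D(x,y),$$
hence $|g(x)-g(y)|\leq L\,D(x,y)$, i.e., $\|g\|_L\leq\|g|_S$. Injectivity is automatic since both cones consist of real-valued functions on $X$ vanishing at $x_0$.

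The main ``difficulty'' is really bookkeeping: the semi-Lipschitz supremum in Proposition~\ref{SLcri} is restricted to pairs with $D_P(y,x)>0$, so one must check that this restriction discards no relevant pair for $\phi\in P$ when recovering $\|\phi\|_L$. Ingredient (a) handles exactly this point, since it guarantees that whenever $\phi$ strictly separates $x$ and $y$ the asymmetrized distance $D_P(y,x)$ is automatically positive.
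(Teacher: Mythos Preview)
Your proof is correct and follows essentially the same strategy as the paper: the paper uses exactly your ingredient~(a) (self-testing with $\psi=\phi/\|\phi\|_L$) to obtain $\|\phi|_S\leq\|\phi\|_L$, and exactly your ingredient~(b) (the pointwise bound $D_P\leq D$) to prove~(ii). The only organizational difference is that the paper proves~(ii) first and then invokes it to get the reverse inequality $\|\phi\|_L\leq\|\phi|_S$ in~(i), whereas you establish that reverse inequality directly via the quotient comparison; both routes rest on the same inequality $D_P\leq D$, so the distinction is purely cosmetic.
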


\begin{proof}
Let $\phi \in SL$ and $x,y\in X$. Then
\begin{equation*}
\phi (y)-\phi (x)\leq \Vert \phi |_{S}D_{P}(x,y)=\Vert \phi |_{S}\Vert \delta_y-\delta_x|_{\mathcal{F}_{P}}\leq \Vert \phi |_{S}\Vert \delta_y-\delta_x\Vert_{\mathcal{F}}=\Vert \phi |_{S}D(x,y),
\end{equation*}
which yields that $\phi \in \mathrm{Lip}_{0}(X,D)$ and $\Vert \phi \Vert_{L}\leq \Vert \phi |_{S}$. This proves (ii).\smallskip\newline
Let now $\phi :(X,D)\rightarrow \mathbb{R}$ be in $P$ with $\Vert \phi\Vert_{L} \neq 0$. Then
$\phi_{1}=\frac{\phi }{\Vert \phi \Vert_L}$ is also in $P$ and $\Vert \phi_{1}\Vert_{L}=1$. Given $x,y\in X$, we deduce:
\begin{align*}
D_{P}(x,y)=\Vert \delta_y-\delta_x|_{\mathcal{F}_{P}}&=\sup_{\substack{ \psi
\in P \\ \Vert \psi \Vert _{L}\leq 1}}(\psi (y)-\psi (x))\\
&\geq\phi_{1}(y)-\phi _{1}(x)
=\frac{1}{\Vert \phi \Vert _{L}}(\phi (y)-\phi (x)),
\end{align*}
which yields $\phi (y)-\phi (x)\leq \Vert \phi \Vert _{L}D_{P}(x,y)$.
Therefore, $\phi \in SL$ and $\Vert \phi |_{S}\leq \Vert \phi \Vert _{L}$.
Combining with (ii) we deduce  $\Vert \phi \Vert _{L}=\Vert \phi |_{S}$
and (i) follows.
\end{proof}

Let us set
\begin{equation}
F=\mathrm{span}\{\delta (x):x\in X\}\subset SL^{\ast }\text{ \quad and\quad }
\widehat{F}=\mathrm{span}\{\widehat{\delta}(x):x\in X\}\subset L^{\ast }
\label{eq:FF}
\end{equation}
where $\delta $ (respectively, $\widehat{\delta}$) is the canonical injection of
$(X,D_{P})$ into $SL^{\ast }$ (respectively, of $(X,D)$ into $L^\ast $).
There is a canonical bijection between $F$ and $\widehat{F},$ under which a
general element $Q=\sum_{i=1}^{n}\lambda _{i}\delta (x_{i})$ of $F$ is
identified with the element $\widehat{Q}=\sum_{i=1}^{n}\lambda _{i}\widehat{\delta}
(x_{i})$ of $\widehat{F}.$ Using this bijection, we have the following result.

\begin{proposition}[ $||\cdot ||_{\mathcal{F}}$ is equivalent to the
symmetrization of $\Vert \cdot |_{\mathcal{F}_{a}}$]
\label{Prop_equiva}For any $Q\in F$ it holds:
\begin{equation*}
\max \{\Vert Q|_{\mathcal{F}_{a}},\Vert\!\!-\!\!Q|_{\mathcal{F}_{a}}\}\leq \Vert
\widehat{Q}\Vert _{\mathcal{F}}\leq \Vert \widehat{Q}|_{\mathcal{F}_{P}}+\Vert -\widehat{
Q}|_{\mathcal{F}_{P}}\leq 2\max \{\Vert Q|_{\mathcal{F}_{a}},\Vert\!\!-\!\!Q|_{
\mathcal{F}_{a}}\}.
\end{equation*}
\end{proposition}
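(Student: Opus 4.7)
The plan is to establish each of the three inequalities separately, in each case unfolding the relevant norm as a supremum of pairings with test functions and then changing the domain of the supremum via the appropriate inclusion. Under the canonical bijection $Q\leftrightarrow \widehat{Q}$ of~\eqref{eq:FF}, both pairings return $\sum_i\lambda_i\phi(x_i)$ whenever $Q=\sum_i\lambda_i\delta(x_i)$, so I shall freely write $\phi(Q)=\langle\phi,\widehat{Q}\rangle$ for any $\phi\in SL\subseteq L$ (Lemma~\ref{isoinjec}(ii) already guarantees this containment). The three inequalities will then follow, in order, from Lemma~\ref{isoinjec}(ii), from the Hahn--Banach decomposition recorded in~\eqref{eq:equiv} of Remark~\ref{naturalasymmetrizednorm}, and from Lemma~\ref{isoinjec}(i).

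For the leftmost inequality, Lemma~\ref{isoinjec}(ii) tells me that every $\phi\in SL$ with $\|\phi|_S\leq 1$ belongs to $L$ with $\|\phi\|_L\leq\|\phi|_S\leq 1$. Since the sup defining $\|Q|_{\mathcal{F}_a}$ ranges over the smaller family, I obtain
\[
\|Q|_{\mathcal{F}_a}\;=\;\sup_{\phi\in SL,\,\|\phi|_S\leq 1}\phi(Q)\;\leq\;\sup_{\phi\in L,\,\|\phi\|_L\leq 1}\langle\phi,\widehat{Q}\rangle\;=\;\|\widehat{Q}\|_{\mathcal{F}},
\]
and the same argument applied to $-Q$, together with $\|\!-\!\widehat{Q}\|_{\mathcal{F}}=\|\widehat{Q}\|_{\mathcal{F}}$, yields the matching bound for $\|\!-\!Q|_{\mathcal{F}_a}$.

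For the middle inequality I would invoke the argument already carried out in Remark~\ref{naturalasymmetrizednorm}: by Hahn--Banach, choose $\phi\in L$ with $\|\phi\|_L\leq 1$ attaining $\|\widehat{Q}\|_{\mathcal{F}}=\langle\phi,\widehat{Q}\rangle$, decompose $\phi=\phi_1-\phi_2$ with $\phi_1,\phi_2\in P$ and $\max\{\|\phi_1\|_L,\|\phi_2\|_L\}\leq 1$ via~\eqref{eq:P}, and split
\[
\|\widehat{Q}\|_{\mathcal{F}}\;=\;\langle\phi_1,\widehat{Q}\rangle+\langle\phi_2,-\widehat{Q}\rangle\;\leq\;\|\widehat{Q}|_{\mathcal{F}_P}+\|\!-\!\widehat{Q}|_{\mathcal{F}_P}.
\]

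For the rightmost inequality I would run Lemma~\ref{isoinjec}(i) in the opposite direction: every $\phi\in P$ with $\|\phi\|_L\leq 1$ lies in $SL$ with $\|\phi|_S=\|\phi\|_L\leq 1$, so enlarging the test family yields
\[
\|\widehat{Q}|_{\mathcal{F}_P}\;=\;\sup_{\phi\in P,\,\|\phi\|_L\leq 1}\langle\phi,\widehat{Q}\rangle\;\leq\;\sup_{\phi\in SL,\,\|\phi|_S\leq 1}\phi(Q)\;=\;\|Q|_{\mathcal{F}_a},
\]
and analogously $\|\!-\!\widehat{Q}|_{\mathcal{F}_P}\leq\|\!-\!Q|_{\mathcal{F}_a}$. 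Summing the two bounds and using $a+b\leq 2\max\{a,b\}$ closes the chain. I do not anticipate a genuine obstacle here: each step is a one-line manipulation of a supremum, and the only thing worth double-checking is the identification of pairings across $F$ and $\widehat{F}$, which is immediate from linearity of evaluation on $\mathrm{span}(\delta(X))$.
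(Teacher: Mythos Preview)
Your proof is correct and follows essentially the same approach as the paper: both arguments use Lemma~\ref{isoinjec}(ii) for the leftmost inequality, the Hahn--Banach decomposition recorded in~\eqref{eq:equiv} for the middle inequality, and Lemma~\ref{isoinjec}(i) for the rightmost inequality. The only cosmetic difference is the order of presentation---the paper treats the bound $\|\widehat{Q}|_{\mathcal{F}_P}\leq\|Q|_{\mathcal{F}_a}$ first---but the content is identical.
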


\begin{proof}
Let $F=\mathrm{span}\left( \delta (X)\right) $ and $Q\in F$. Then $Q$ is of
the form $Q=\sum_{i=1}^{n}\lambda _{i}\delta (x_{i})$ for some $n\in \mathbb{
N}$, $\lambda _{i}\in \mathbb{R}$ and $x_{i}\in X$, $i=1,\ldots ,n$, and
\begin{equation*}
\Vert \widehat{Q}|_{\mathcal{F}_{P}}=\sup_{\substack{ \phi \in P \\ \Vert \phi
\Vert _{L}\leq 1}}\langle \phi ,\widehat{Q}\rangle =\sup_{\substack{ \phi \in P
\\ \Vert \phi \Vert _{L}\leq 1}}\sum_{i=1}^{n}\lambda _{i}\phi (x_{i})\leq
\sup_{\substack{ \varphi \in SL \\ \Vert \phi |_{S}\leq 1}}%
\sum_{i=1}^{n}\lambda _{i}\varphi (x_{i}):=\Vert Q|_{\mathcal{F}_{a}}.
\end{equation*}%
We also obtain $\Vert -\widehat{Q}|_{\mathcal{F}_{P}}\leq \Vert\!\!-\!\!Q|_{\mathcal{F}_{a}}$. 
Now, if $\varphi \in SL$ satisfies $\Vert \phi |_{S}\leq 1,$ then by
Lemma~\ref{isoinjec}(ii) we deduce that $\varphi \in L$ and $\Vert
\varphi \Vert _{L}\leq \Vert \varphi |_{S}\leq 1$. Hence
\begin{equation*}
\Vert Q|_{\mathcal{F}_{a}}\leq \sup_{\substack{ \phi \in L \\ \Vert \phi
\Vert _{L}\leq 1}}\sum_{i=1}^{n}\lambda _{i}\phi (x_{i})=\Vert \widehat{Q}\Vert
_{\mathcal{F}}
\end{equation*}
and $\Vert\!\!-\!\!Q|_{\mathcal{F}_{a}}\leq \Vert \widehat{Q}\Vert _{\mathcal{F}}$,
which yields
\begin{equation*}
\max \{\Vert Q|_{\mathcal{F}_{a}},\Vert\!\!-\!\!Q|_{\mathcal{F}_{a}}\}
\leq\Vert \widehat{Q}\Vert _{\mathcal{F}}\leq \Vert \widehat{Q}|_{\mathcal{F}_{P}}+\Vert -\widehat{Q}|_{\mathcal{F}_{P}},
\end{equation*}
where the last inequality follows from \eqref{eq:equiv}. The result follows.
\end{proof}

\medskip

In the sequel, we shall identify $F$ with $\widehat{F}$, defined in \eqref{eq:FF}. Under this identification, the norm $\|\cdot \|_{\mathcal{F}}$ can be considered to be also defined on $F$. Under this convention, the
statement of Proposition~\ref{Prop_equiva} reads as follows: the norm $||\cdot ||_{\mathcal{F}}$ is equivalent to the symmetrization of $\Vert
\cdot |_{\mathcal{F}_{a}}\mathcal{\ }$and consequently
\begin{equation}\label{eq:F}
\mathcal{F}_{a}(X,D_{P})=\overline{F}^{\Vert \cdot \Vert _{\mathcal{F}
_{a}^{s}}}=\overline{F}^{\Vert \cdot \Vert _{\mathcal{F}}}=\mathcal{F}(X,D),
\end{equation}
which yields that $\mathcal{F}_{a}(X,D_{P})$ and $\mathcal{F}(X,D)$ can be
identified as sets. Moreover
\begin{equation*}
D_{P}(x,y)=\Vert \delta (y)-\delta (x)|_{\mathcal{F}_{P}}=\Vert \delta
(y)-\delta (x)|_{\mathcal{F}_{a}}.
\end{equation*}
Hence the following result holds.

\begin{theorem}[Compatibility I]
\label{Nino} Let $(X,D)$ be a metric space with a $P$-asymmetrization. Then, the symmetrizations of $(\mathcal{F}(X,D),\|\cdot|_{\mathcal{F}_P})$
and $(\mathcal{F}_a(X),\|\cdot|_{\mathcal{F}_a})$ are both isomorphic to the
Lipschitz-free space $(\mathcal{F}(X,D),\|\cdot\|_\mathcal{F})$.
\end{theorem}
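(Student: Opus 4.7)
The plan is to recognize that Theorem~\ref{Nino} is essentially a corollary of Proposition~\ref{Prop_equiva} combined with the observation that forming the bicompletion commutes with symmetrization. Most of the work has already been carried out in the preceding discussion; what remains is to chase the norm-equivalences through the completion procedure and record the two isomorphisms.

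For the $P$-asymmetrization of $\mathcal{F}(X,D)$ the argument is immediate. The underlying vector space is still $\mathcal{F}(X,D)$, and the inequality~\eqref{eq:equiv} together with the estimate that precedes it yield
\[
\max\{\|Q|_{\mathcal{F}_P}, \|\!-\!Q|_{\mathcal{F}_P}\} \;\leq\; \|Q\|_{\mathcal{F}} \;\leq\; \|Q|_{\mathcal{F}_P} + \|\!-\!Q|_{\mathcal{F}_P}
\]
for every $Q \in \mathcal{F}(X,D)$. Either symmetrization of $\|\cdot|_{\mathcal{F}_P}$ (the additive or the max version, which are themselves equivalent by Definition~\ref{sym-dis}) is therefore equivalent to $\|\cdot\|_{\mathcal{F}}$, and the identity map realizes the required Banach-space isomorphism on the nose.

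For $\mathcal{F}_a(X,D_P)$ an extra completion step is needed. By Definition~\ref{free}, $\mathcal{F}_a(X,D_P)$ is the bicompletion of $(F,\|\cdot|_{\mathcal{F}_a})$ with $F=\mathrm{span}(\delta(X))$, so its symmetrization is the ordinary metric completion of $F$ under the symmetrized norm $\|\cdot\|_{\mathcal{F}_a^s}$. Proposition~\ref{Prop_equiva} supplies the two-sided estimate
\[
\max\{\|Q|_{\mathcal{F}_a}, \|\!-\!Q|_{\mathcal{F}_a}\} \;\leq\; \|\widehat{Q}\|_{\mathcal{F}} \;\leq\; 2\max\{\|Q|_{\mathcal{F}_a}, \|\!-\!Q|_{\mathcal{F}_a}\}
\]
on $F$, so $\|\cdot\|_{\mathcal{F}_a^s}$ and $\|\cdot\|_{\mathcal{F}}$ are equivalent on the dense subspace $F$. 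Since completions under equivalent norms are canonically linearly homeomorphic, the symmetrization of $\mathcal{F}_a(X,D_P)$ is Banach-isomorphic to the completion of $F$ under $\|\cdot\|_{\mathcal{F}}$, which is exactly $\mathcal{F}(X,D)$; this is the set-level identity already noted in~\eqref{eq:F}, now read as an isomorphism of Banach spaces.

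The only subtle point, which I would make explicit to avoid any confusion, is that forming the bicompletion of a normed cone and then symmetrizing yields the same Banach space as first symmetrizing and then taking the ordinary metric completion. This follows immediately from the definition of bicompleteness via $d^{s}$-Cauchy sequences, together with the universal property of the bicompletion. With this observation in hand, no further computation is needed: the theorem reduces to the two norm-equivalences displayed above, each transported (trivially in the first case, through completion in the second) to the relevant Banach space.
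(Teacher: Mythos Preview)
Your proposal is correct and follows essentially the same approach as the paper: the theorem is stated as an immediate consequence of Proposition~\ref{Prop_equiva} and equation~\eqref{eq:equiv}, with the set-level identification~\eqref{eq:F} recording exactly the completion argument you spell out. The paper gives no formal proof beyond the paragraph preceding the statement, so your write-up is simply a more explicit version of the same reasoning, including the observation (left implicit in the paper) that bicompletion commutes with symmetrization.
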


The following diagram illustrates the situation described by Theorem~\ref{Nino}.\medskip

$$
\begin{tikzpicture}
  \node (X) {$\ \ F=\mathrm{span}\left(\delta(X)\right)$};
  \node (FX1) [below of=X] {};
 \node (Y) [right of=X] {};
 \node (Z) [right of=Y] {$\sqsubseteq$};
 \node (N) [below of=Z] {\footnotesize{$\|\cdot\|_{\mathcal{F}_a^s}$-dense}};
   \draw[<-] (Z) to node [swap] {$$} (N);
  \node (Z2) [right of=Z] { $\mathcal{F}_a(X)$ };
 \node (Z3) [right of=Z2] {$\sqsubseteq$};
 \node (Z4) [right of=Z3] {$(SL)^*$};
  \node (FX) [below of=FX1] {$\widehat{F}=\mathrm{span}\left(\widehat{\delta}(X)\right)$};
  \node (E1) [right of=FX] { };
   \node (E3) [right of=E1] {$\sqsubseteq$};
     \node (E4) [right of=E3] { $\mathcal{F}(X)$ };
   \node (E5) [right of=E4] {$\sqsubseteq$};
     \node (E6) [right of=E5] { $L^*$ };
  \draw[<->] (X) to node [swap] {$$} (FX);
 \node (N3) [below of=E3] {\footnotesize{$\|\cdot\|_{\mathcal{F}}$-dense}};
    \draw[->] (N3) to node [swap] {$$} (E3);
\end{tikzpicture}
$$

\begin{center}
\noindent \rule{10cm}{1.2 pt}
\end{center}

\medskip

\noindent Let us now study the inverse procedure: we start with a quasi-metric space $(X,d)$ and consider a symmetrization $D$ of its distance (where $D$ is either $d^s$ or $d^{s_0}$, see Remark~\ref{sym-dis}). It is easily seen that every $\phi\in\SLip_0(X,d)$
satisfies $\phi\in\Lip_0(X,D)$ and $\|\phi\|_L\leq \|\phi|_S$. Therefore, $P:=\SLip_0(X,d)$ can be viewed as a cone in $\Lip_0(X,D)$ and be used to define an asymmetric norm $\|\!\cdot\!|_{\F_P}$ on $\mathcal{F}(X,D)$ and consequently a quasi-metric $D_P$ on $X$. In this setting, forthcoming Proposition~\ref{compatibilyII} establishes a compatibility result under the following assumption: \smallskip\newline

\noindent There exists $\alpha \geq 1$ such that for every $\phi\in \SLip_0(X,d)$
 \begin{equation}\label{eq-cII}
 \Big(\,\|\phi\|_L\leq\Big)\,\, \|\phi|_S\leq \alpha \|\phi\|_L\,.
\end{equation}

\begin{proposition}[Compatibility II]\label{compatibilyII}
Let $(X,d)$ be a quasi-metric space with symmetrized distance $D$ and assume~\eqref{eq-cII} holds.
We set $P:=\SLip_0(X,d)$ and define for every $Q\in\F(X,D)$
$$
\|Q|_{\F_P}:=\displaystyle\sup_{\substack{\phi \in P\\ \|\phi\|_L \leq 1}}\langle Q,\phi \rangle\,.
$$
Then for all $Q\in \mathrm{span}(\delta(X))$
\begin{equation}\label{eq-eq-an}
\|Q|_{\F_a}\leq \|Q|_{\F_P}\leq \alpha \|Q|_{\F_a}\,.
\end{equation}
In particular, setting for $x,y\in X$
$$D_P(x,y):=\|\delta_y-\delta_x|_{\F_P}$$
we obtain that for all $x,y\in X$ it holds:
\begin{equation}\label{eq-eq-qd}
 d(x,y)\leq D_P(x,y)\leq \alpha d(x,y)\,.
\end{equation}
\end{proposition}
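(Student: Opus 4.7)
The plan is to notice that both asymmetric norms $\|\cdot|_{\F_a}$ and $\|\cdot|_{\F_P}$ are defined as the supremum of the very same pairing $\langle Q,\phi\rangle$ over the very same cone $P=\SLip_0(X,d)$, but with the normalizing constraint expressed in two (equivalent) norms on $P$: namely $\|\phi|_S\le 1$ for $\|Q|_{\F_a}$ (as in Definition~\ref{free} and Proposition~\ref{Prop_equiva}) versus $\|\phi\|_L\le 1$ for $\|Q|_{\F_P}$. Assumption~\eqref{eq-cII} provides exactly the quantitative equivalence $\|\phi\|_L\le\|\phi|_S\le\alpha\|\phi\|_L$ on $P$, so~\eqref{eq-eq-an} will follow from a straightforward feasibility/rescaling argument.

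For the first inequality, the bound $\|\phi\|_L\le\|\phi|_S$ yields the inclusion $\{\phi\in P:\|\phi|_S\le 1\}\subseteq\{\phi\in P:\|\phi\|_L\le 1\}$; taking the supremum of $\langle Q,\phi\rangle$ over the larger feasible set immediately gives $\|Q|_{\F_a}\le\|Q|_{\F_P}$.

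For the reverse inequality, fix any $\phi\in P$ with $\|\phi\|_L\le 1$. By~\eqref{eq-cII} we have $\|\phi|_S\le\alpha$, and since $P$ is a cone, $\psi:=\phi/\alpha$ lies in $P$ and satisfies $\|\psi|_S\le 1$. Therefore
$$\langle Q,\phi\rangle=\alpha\,\langle Q,\psi\rangle\;\le\;\alpha\,\|Q|_{\F_a};$$
taking the supremum over all such $\phi$ produces $\|Q|_{\F_P}\le\alpha\|Q|_{\F_a}$, which completes~\eqref{eq-eq-an}.

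Finally, to deduce~\eqref{eq-eq-qd}, I would specialize~\eqref{eq-eq-an} to the atom $Q=\delta_y-\delta_x\in\mathrm{span}(\delta(X))$: Proposition~\ref{isoinj} gives $\|\delta_y-\delta_x|_{\F_a}=d(x,y)$, while $\|\delta_y-\delta_x|_{\F_P}=D_P(x,y)$ by the definition stated in the proposition. I foresee no serious obstacle; the only minor subtlety worth watching is that the whole argument must stay inside $P=\SLip_0(X,d)$ (and not pass to a potentially larger $\|\cdot\|_L$-ball in $\Lip_0(X,D)$), and the cone property of $P$, i.e.\ stability under multiplication by $1/\alpha>0$, is exactly what makes the rescaling step legitimate.
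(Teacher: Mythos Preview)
Your proposal is correct and follows exactly the route indicated (very tersely) by the paper: both $\|Q|_{\F_a}$ and $\|Q|_{\F_P}$ are suprema of $\langle Q,\phi\rangle$ over the same cone $P=\SLip_0(X,d)$ under the constraints $\|\phi|_S\le 1$ and $\|\phi\|_L\le 1$ respectively, and the inclusion/rescaling argument you give is precisely how the inequalities $\|\phi\|_L\le\|\phi|_S\le\alpha\|\phi\|_L$ translate into~\eqref{eq-eq-an}. Your write-up simply spells out in detail what the paper compresses into a single sentence.
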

\noindent {\bf Terminology} ({\em equivalence of asymmetric norms/quasi-metrics}). We interpret relation~\eqref{eq-eq-an} as an equivalence relation for the asymmetric norms $\|Q|_{\F_a}$ and $\|Q|_{\F_P}$. Similarly, relation~\eqref{eq-eq-qd} means that the quasi-distances $d$ and $D_P$ are equivalent.
\begin{proof}
The equivalence between the asymmetric norms $ \|\!\cdot\!|_{\F_a}$ and $\|\!\cdot\!|_{\F_P}$ on the vector space $\mathrm{span}(\delta(X))$ follows directly from their definitions and the inequalities $\|\phi\|_L\leq\|\phi|_S\leq \alpha \|\phi\|_L$.
\end{proof}

\begin{remark}
The equivalence between the quasi-metric $d$ and the canonical asymmetrization~$D_P$ of the symmetrized distance $D$ yields an equivalence between $D$ and the symmetrization $(D_P)^s$ of~$D_P$.
\end{remark}
\smallskip

\noindent If in addition to~\eqref{eq-cII}, we assume that $P=\SLip_0(X,d)$ induces an asymmetrization on the free space $\F(X,D)$, that is, for every $\phi \in \Lip_0(X,D)$ there exist
$\phi _{1},\phi _{2}\in P$ such that
\begin{equation*}
 \phi =\phi _{1}-\phi _{2}\qquad\text{ and } \qquad \max \,\left\{ ||\phi _{1}||_{L},||\phi _{2}||_{L}\right\} \,\leq \,||\phi
||_L\,\leq \,||\phi _{1}||_{L}+||\phi _{2}||_{L}
 \label{decomp}
\end{equation*}
then the equivalence between $D$ and $(D_P)^s$ extends to the corresponding free spaces (see Remark~\ref{naturalasymmetrizednorm}). In particular, the following result holds.

\begin{proposition}[Compatibility III]\label{compatibilityIII}
Let $(X,d)$ be a quasi-metric space and $D=d^s$ or $D=d^{s_0}$. Assume that the cone $P=\SLip_0(X,d)$ of $\Lip_0(X,D)$ induces an asymmetrization in $\F(X,D)$ and \eqref{eq-cII} holds. Then the asymmetric free spaces $\F_a(X,d)$ and $\F_a(X,D_P)$ coincide (as sets) with the free space $\F(X,D)$, that is:
$$\F_a(X,d)=\F(X,D)=\F_a(X,D_P).$$
Moreover: \smallskip\newline
{\rm (i).} The quasi-metrics $d$ and $D_P$ are equivalent, as also are the (symmetric) metrics $D$, $(D_P)^s$ and
$(D_P)^{s_0}$ (symmetrizations of $D_P$).\smallskip\newline
{\rm (ii).} The asymmetric norms $\|\cdot|_{\F_a(X,d)}$, $\|\cdot|_{\F_P}$ and $\|\cdot|_{\F_a(X,D_P)}$ are equivalent. \smallskip\newline
{\rm (iii).} The symmetrizations of $\|\!\cdot\!|_{\F_a(X,d)}$, $\|\!\cdot\!|_{\F_P}$ and $\|\!\cdot\!|_{\F_a(X,D_P)}$ are equivalent to
$\|\!\cdot\!\|_\F$.

\end{proposition}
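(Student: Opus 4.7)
The strategy is to chain together the equivalences established in Propositions~\ref{Prop_equiva} and~\ref{compatibilyII}, then use uniqueness of the bicompletion to identify the three spaces as sets.

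First I would record the two facts already available from Proposition~\ref{compatibilyII}: under hypothesis~\eqref{eq-cII}, the asymmetric norms $\|\cdot|_{\F_a(X,d)}$ and $\|\cdot|_{\F_P}$ are equivalent on $\mathrm{span}(\delta(X))$, and the quasi-metrics $d$ and $D_P$ are equivalent on $X$, with $d(x,y)\leq D_P(x,y)\leq \alpha d(x,y)$. Symmetrizing yields at once that $D$, $(D_P)^s$ and $(D_P)^{s_0}$ are all bi-Lipschitz equivalent metrics, which takes care of the metric part of (i).

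Next, because $P=\SLip_0(X,d)$ is assumed to induce an asymmetrization on $\F(X,D)$, the metric space $(X,D)$ together with its $P$-asymmetrization $(X,D_P)$ fits exactly the framework of Subsection~\ref{sub3.3}. Applying Proposition~\ref{Prop_equiva} (equivalently, Theorem~\ref{Nino}) to this situation gives, for every $Q\in \mathrm{span}(\delta(X))$,
\[
\max\{\|Q|_{\F_a(X,D_P)},\|\!\!-\!\!Q|_{\F_a(X,D_P)}\}\leq \|Q\|_{\F}\leq 2\max\{\|Q|_{\F_a(X,D_P)},\|\!\!-\!\!Q|_{\F_a(X,D_P)}\}.
\]
In particular the symmetrization of $\|\cdot|_{\F_a(X,D_P)}$ is equivalent to $\|\cdot\|_{\F}$, and (via~\eqref{eq:F}) $\F_a(X,D_P)=\F(X,D)$ as sets.

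It remains to bridge the gap between $\F_a(X,d)$ and $\F_a(X,D_P)$. Here I would exploit~\eqref{eq-eq-qd}: the inequalities $d\leq D_P\leq \alpha d$ imply by Proposition~\ref{SLcri} that $\SLip_0(X,d)=\SLip_0(X,D_P)$ as sets, and the corresponding semi-Lipschitz conic norms satisfy $\|f|_{S,D_P}\leq \|f|_{S,d}\leq \alpha \|f|_{S,D_P}$. Dualising these inclusions of unit balls on the vector space $\mathrm{span}(\delta(X))$ yields
\[
\|Q|_{\F_a(X,d)}\leq \|Q|_{\F_a(X,D_P)}\leq \alpha\,\|Q|_{\F_a(X,d)}.
\]
Combined with the equivalence $\|\cdot|_{\F_a(X,d)}\sim \|\cdot|_{\F_P}$ of Proposition~\ref{compatibilyII} and the previous step, this shows that all three asymmetric norms are equivalent on $\mathrm{span}(\delta(X))$, proving~(ii), and that their symmetrizations are all equivalent to $\|\cdot\|_{\F}$, proving~(iii).

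Finally, I would invoke uniqueness of the bicompletion for cancellative normed cones: since equivalent asymmetric norms produce the same Cauchy sequences for the symmetrized distance and the same convergent limits, the bicompletions of $(\mathrm{span}(\delta(X)),\|\cdot|_{\F_a(X,d)})$, $(\mathrm{span}(\delta(X)),\|\cdot|_{\F_P})$, and $(\mathrm{span}(\delta(X)),\|\cdot|_{\F_a(X,D_P)})$ all coincide as sets with the completion under $\|\cdot\|_{\F}$, which is $\F(X,D)$. This gives $\F_a(X,d)=\F(X,D)=\F_a(X,D_P)$ and completes~(i). The only subtle point, and the one I would check carefully, is this final identification step: one has to verify that equivalent asymmetric (hemi-)norms on a cancellative normed cone really do produce the same bicompletion as a set, which follows because equivalence of $\|\cdot|$ on a vector space induces equivalence of the symmetrized norms and hence identical completions.
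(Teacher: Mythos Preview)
Your proposal is correct and follows essentially the same route as the paper: invoke Proposition~\ref{compatibilyII} for the equivalence $\|\cdot|_{\F_a(X,d)}\sim\|\cdot|_{\F_P}$ and $d\sim D_P$, use~\eqref{eq:F}/Theorem~\ref{Nino} for $\F_a(X,D_P)=\F(X,D)$, pass from $d\sim D_P$ to the isomorphism of the $\SLip_0$ cones and hence of their free spaces, and conclude by the bicompletion/closure argument. The paper's proof is terser (it condenses your dualization step into the phrase ``the normed cones $\SLip_0(X,d)$ and $\SLip_0(X,D_P)$ are isomorphic, which leads to an isomorphism of the corresponding semi-Lipschitz free spaces''), but the logical skeleton is identical.
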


\begin{proof} We have already seen that $\F(X,D)=\F_a(X,D_P)$ (as sets), see~\eqref{eq:F}. By Proposition~\ref{compatibilyII},
the asymmetric norms $\|\!\cdot\!|_{\F_a(X,d)}$ and $\|\!\cdot\!|_{\F_P}$ are equivalent on $F=\mathrm{span}\{\delta (x):x\in X\}$, therefore $$\F_a(X,d)=\overline{F}^{\|\cdot\|_{\F_a(X,d)}^s}=\overline{F}^{\|\cdot\|_{\F_P}^s}=\F(X,D).$$
Assertions (i) follow directly from Proposition~\ref{compatibilyII}. For (ii) it remains to prove that $\|\!\cdot\!|_{\F_a(X,d)}$ and $\|\!\cdot|_{\F_a(X,D_P)}$ are equivalent. We established in~\eqref{eq-eq-qd} that the quasi-distances $d$ and $D_P$ are equivalent. This yields that the normed cones $\SLip_0(X,d)$ and $\SLip_0(X,D_P)$ are isomorphic, which leads to an isomorphism of the corresponding semi-Lipschitz free spaces. The equivalence between the symmetrizations of the asymmetric norms asserted in (iii) now follows from~(ii). Thanks to Theorem~\ref{Nino} they are also equivalent to $\|\!\cdot\!\|_\F$.
\end{proof}

\begin{remark}
If the value of $\alpha$ associated to the assumption \eqref{eq-cII} is equal to $1$, all of the aforementioned equivalences of Proposition~\ref{compatibilityIII} become equalities.
\end{remark}

\medskip

\subsection{Properties \texorpdfstring{$\mathbf{(S)}$}{(S)} and \texorpdfstring{$\mathbf{(S^*)}$}{(S*)}}

We have shown that the $P$-asymmetrization of a metric space $(X,D)$ gives rise to a quasi-metric space, for which the symmetrization of its asymmetric free space is isomorphic to the free space
$(\mathcal{F}(X),\|\!\cdot\!\|_{\mathcal{F}})$.
In this subsection we shall be interested in cases in which the aforementioned isomorphism is in fact an isometry.

\begin{definition}
\label{propSS*} Let $(X,D)$ be a metric space, $L=\Lip_0(X,D)$ and $P\subset L$ be a cone.
 We say that the metric space $(X,D)$ satisfies:\smallskip
\begin{itemize}
\item[(i)]  property $\mathbf{(S)}$ with respect to $P$, if $P$ induces a nontrivial asymmetrization $D_P$ on $X$ and $$SL=\mathrm{SLip}_0(X,D_P)=P.$$
\item[(ii)] property $\mathbf{(S^\ast)}$ (respectively, $\mathbf{(S_0^\ast)}$) with respect to $P$ if, in addition to (i), it holds:
$$\|Q\|_\mathcal{F}=\|Q|_{\mathcal{F}_P}+\|\!\!-\!\!Q|_{\mathcal{F}_P} \ \left(\text{respectively,}\,\,\, \|Q\|_\mathcal{F}=\max\{\|Q|_{\mathcal{F}_P},\|\!\!-\!\!Q|_{\mathcal{F}_P}\!\}\right)$$
for every $Q\in \mathcal{F}(X,D)$.
\end{itemize}
\end{definition}
\smallskip

\noindent The following proposition is straightforward.

\begin{proposition}
\label{SS*} Let $(X,D)$ be a metric space. \smallskip \newline
\noindent {\rm (i).} If $(X,D)$ satisfies $\mathbf{(S)}$ with respect to $P$, then $(\mathcal{F}(X,D),\|\cdot|_{\mathcal{F}_P})$ and $(\mathcal{F}_a(X,D_P),\|\cdot|_{\mathcal{F}_a})$ are identical. \smallskip \\[0.05cm]
\noindent {\rm (ii).} If $(X,D)$ satisfies $\mathbf{(S^*)}$ $($resp. $\mathbf{(S_0^\ast)})$ with respect to $P$, then
the $d^s$-symmetrization (resp. \linebreak $d^{s_0}$-symmetrization) of $(\mathcal{F}_a(X,D_P),\|\cdot|_{\mathcal{F}_a})$ given in \eqref{eq-dist} is isometrically isomorphic to $(\mathcal{F}(X,D),\|\!\cdot\!\|_\mathcal{F})$.
\end{proposition}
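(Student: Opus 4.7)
The plan is to compare the two asymmetric norms directly on the common dense subspace $F=\mathrm{span}(\delta(X))$ and then extend by (bi)completion. The key observation for (i) is that under property $\mathbf{(S)}$ one has $P=SL$ as sets of functions, and by Lemma~\ref{isoinjec} the two norms $\|\cdot\|_L$ (Lipschitz norm on $(X,D)$) and $\|\cdot|_S$ (semi-Lipschitz conic norm on $(X,D_P)$) coincide on this common cone. Therefore, for any $Q\in F$,
\[
\|Q|_{\mathcal{F}_P}=\sup_{\substack{\phi\in P\\ \|\phi\|_L\leq 1}}\langle\phi,Q\rangle=\sup_{\substack{\varphi\in SL\\ \|\varphi|_S\leq 1}}\langle\varphi,Q\rangle=\|Q|_{\mathcal{F}_a}.
\]
By \eqref{eq:F} the underlying sets of $\mathcal{F}(X,D)$ and $\mathcal{F}_a(X,D_P)$ are already identified (both arise as bicompletions of $F$ whose symmetrized norms are equivalent to $\|\cdot\|_\mathcal{F}$), and the asymmetric norms extend continuously from $F$ to the whole space. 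Since they agree on $F$, they agree globally, yielding (i).

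For (ii) under $\mathbf{(S^\ast)}$, apply (i) to obtain $\|Q|_{\mathcal{F}_P}=\|Q|_{\mathcal{F}_a}$ and $\|\!-\!Q|_{\mathcal{F}_P}=\|\!-\!Q|_{\mathcal{F}_a}$ for every $Q\in F$. The defining identity of $\mathbf{(S^\ast)}$ then reads
\[
\|Q\|_\mathcal{F}=\|Q|_{\mathcal{F}_P}+\|\!-\!Q|_{\mathcal{F}_P}=\|Q|_{\mathcal{F}_a}+\|\!-\!Q|_{\mathcal{F}_a}=\|Q\|_{\mathcal{F}_a^s},
\]
so the $d^s$-symmetrization of $\|\cdot|_{\mathcal{F}_a}$ coincides with $\|\cdot\|_\mathcal{F}$ on the dense subspace $F$. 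Both norms being complete on the identified ambient set (Banach spaces in the symmetrized sense), the identity extends by continuity, giving an isometric isomorphism. The case of $\mathbf{(S_0^\ast)}$ is identical, replacing the sum by the maximum and using the $d^{s_0}$-symmetrization.

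The only mildly delicate point is the extension from $F$ to the completion: one must check that, with the norms agreeing on the dense subspace and with the completion carried out in the symmetric (Banach) setting, the resulting extensions still coincide. This is routine once one has the pointwise equality on $F$ and the equivalence between symmetrized asymmetric norms and $\|\cdot\|_\mathcal{F}$ recorded in Proposition~\ref{Prop_equiva}, so the proof is essentially an unwinding of definitions combined with Lemma~\ref{isoinjec}.
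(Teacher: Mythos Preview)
Your argument is correct and is exactly the ``straightforward'' verification the paper has in mind (the paper gives no proof beyond that word). The identification $P=SL$ from property $\mathbf{(S)}$, together with the equality $\|\phi\|_L=\|\phi|_S$ on $P$ furnished by Lemma~\ref{isoinjec}(i), makes the two suprema defining $\|\cdot|_{\mathcal{F}_P}$ and $\|\cdot|_{\mathcal{F}_a}$ literally equal, and (ii) then follows immediately from (i) and the defining identity of $\mathbf{(S^\ast)}$ (respectively $\mathbf{(S_0^\ast)}$), which is already stated for all $Q\in\mathcal{F}(X,D)$.
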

\smallskip

\noindent Before we proceed, let us give examples of metric spaces for which the above properties fail.
\begin{example}\label{example_S}
{\bf (i)} (Property $\mathbf{(S)}$ fails) Let $X=\mathbb{R}$ with the usual distance $D(t,s)=|s-t|$, for $t,s\in\mathbb{R}$.
Let $L$ be the space of Lipschitz functions on $\mathbb{R}$ vanishing at $0$ and set
$$P:=\{\phi\in L:\; \int_{\mathbb{R}} \phi \in [0,+\infty]\}.$$
Then $P$ contains the cone $L_+$ of non-negative Lipschitz functions vanishing at $0$, and consequently $L=P-P$ and~\eqref{eq:P} holds. It is easy to see that $$D_P(t,s)=\sup_{\substack{ \phi \in P \\ \Vert \phi \Vert _{L}\leq 1}} (\phi(s)-\phi(t)) = |s-t|=D(t,s).$$ Therefore, $SL=L \neq P$ and  $\mathbf{(S)}$ fails.  \newline

\noindent {\bf (ii)} (Property $\mathbf{(S)}$ holds but properties $\mathbf{(S^\ast)}$ and $\mathbf{(S_0^{\ast})}$ fail) We consider again $X=\mathbb{R}$ equipped with its usual distance $D$ and $L$ be the space of Lipschitz functions on $\mathbb{R}$ vanishing at $0$. We now set $$P=L_+:=\{\phi\in L:\; \phi \geq 0\}.$$
It follows easily that
\begin{align*}
D_+(s,t)&=\sup_{\substack{ \phi \in L_+ \\ \Vert \phi \Vert _{L}\leq 1}} (\phi(t)-\phi(s))\,\\
&=
\begin{cases}
\,\quad |t-s|\,,\smallskip&\ \mbox{if }\ 0\leq s \leq t\,\,\,\mbox{or}\,\,\, s\leq t\leq 0\\
\,\, \min\{\,t, s-t \,\}\,,\smallskip&\ \mbox{if }\ 0\leq t\leq s\\
\,\, \min\{|s|, s-t \}\,,\smallskip&\ \mbox{if }\ t\leq s\leq 0\\
\qquad |t|\,,\smallskip&\ \mbox{if }\ t \leq 0 \leq s  \,\,\,\mbox{or}\,\,\, s\leq 0\leq t\,\\
\end{cases}.
\end{align*}
Let us show that property $\mathbf{(S)}$ holds. Indeed, for $s\neq 0$ we have $D_+(0,s)=s$ and $D_+(s,0) =0$. By Lemma~\ref{isoinjec}(i), $P\subset SL \subset L$. Let $\varphi: \mathbb{R}\mapsto\mathbb{R}$ be any function vanishing at $0$ and assume that for some $s\neq 0$ we have $\varphi(s) <0$. Then $\varphi(0)-\varphi(s) >0$ and $D_+(s,0)=0$ reveals that $\varphi$ cannot be in $SL$, showing that $\mathbf{(S)}$ holds. \smallskip\newline
Taking now any two integers $n,k\geq 2$ we have $D_+(n,-k)=k$, $D_+(-k,n)=n$ and
$D(n,-k)=n+k$, which shows that $\mathbf{(S_0^{\ast})}$ fails. On the other hand, $D_+(1,n)=n-1=D(1,n)$ and $D_+(n,1)=1$
which shows that $\mathbf{(S^\ast)}$ fails.
\end{example}
\medskip

\noindent A typical example of a metric space for which $(S^{\ast })$ holds is the set of real numbers $\mathbb{R}$ viewed as a pointed metric space, for the cone $P=\{\phi \in L:\phi ^{\prime }\geq 0\}$, see forthcoming Lemma~\ref{slip0}. To obtain additional examples of metric spaces satisfying $\mathbf{(S^\ast)}$, let us first recall definitions and results due to Godard~\cite{Godard}, regarding $\R$-trees.
\medskip
\begin{definition}[$\R$-tree]\label{deftrees}
An $\R$-tree is a metric space $T$ satisfying the following two conditions:
\begin{enumerate}
\item [(i)] For any points $x,y\in T$, there exists a unique isometry $\phi:=\phi_{xy}$ of the closed interval $[\, 0 \, , d(x,y) \, ]$ into $T$ such that $\phi(0)=x$ and $\phi(d(x,y))=y$. \\(The range of this isometry is called segment and is denoted by $[x,y]$.)
\item[(ii)]  Any one-to-one continuous mapping $\varphi : [\, 0 \, , 1 \, ] \to T$ has same range as the isometry $\phi_{a,b}$ associated to the points $a=\varphi(0)$ and $b=\varphi(1)$.
\end{enumerate}
\end{definition}

\noindent Our aim is to prove that subsets of (pointed) $\R$-trees satisfy property $\mathbf{(S^*)}$. The base point of an $\R$-tree is denoted by $0$. Then one defines a partial order $\preccurlyeq$ on $T$, by setting $x\preccurlyeq y$ if $x\in [0,y]$. \smallskip \newline
A subset $A$ of $T$ is said to be {\em measurable} whenever $\phi_{xy}^{-1}(A)$ is Lebesgue-measurable for any $x$ and $y$ in $T$. If $A$ is measurable and $S$ is the segment $[ \, x \, , y \, ]$, we write $\lambda_S(A)$ for $\lambda \left(\phi_{xy}^{-1}(A) \right)$, where $\lambda$ is the Lebesgue measure on $\R$. Let $\mathcal{R}$ be the family of all subsets of $T$ which can be written as a finite union of disjoints segments, and for $R=\bigcup_{k=1}^{n} S_k \in \mathcal{R}$, we set
$$\lambda_R(A)=\sum_{k=1}^n \lambda_{S_k}(A).
$$

\noindent Then,
$$\lambda_T(A)=\sup_{R \in \mathcal{R}} \lambda_R(A),$$
\noindent defines a measure (called the length measure) on the $\sigma$-algebra of $T$-measurable sets such that
\begin{equation*}
\int_{[ x , y ]} f(u)  d\lambda_T(u)=\int_{0}^{d(x , y)} f(\phi_{xy}(t)) dt
\end{equation*}
for any $f \in L_1(T)$ and $x,y\in T$.

\begin{definition}[measure on an $\mathbb{R}$-tree]\label{mu}
Let $T$ be a pointed $\R$-tree, and let $A$ be a closed subset of $T$. We denote by $\mu_A$ the measure defined by
$$
\mu_A=\lambda_A + \sum_{a \in A} L(a)  \delta_{a},
$$
where $\lambda_A$ is the restriction of the length measure $\lambda_T$ to $A$, $\delta_{a}$ is the Dirac measure on $a$ and\linebreak $L(a)=\inf_{x \in A \cap [ 0 , a )} d(a \, , x)$.
\end{definition}
\begin{proposition}\cite[Proposition~2.3]{Godard}\label{dualL1}
	Let $T$ be a pointed $\R$-tree, and let $A$ be a closed subset of $T$ containing $0$. Then, $\mathcal{L}^1(\mu_A)^\ast$ is isometrically isomorphic to $\mathcal{L}^\infty (\mu_A)$.
\end{proposition}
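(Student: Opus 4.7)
The plan is to deduce the statement from the classical duality between $\mathcal{L}^{1}$ and $\mathcal{L}^{\infty}$ of a $\sigma$-finite positive measure. Two preliminary tasks are required: verify that $\mu_A$ is a well-defined positive Borel measure, and arrange enough $\sigma$-finiteness to run the Radon-Nikodym argument despite $T$ being possibly non-separable.

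First, I would check that $\mu_A$ is countably additive. The continuous piece $\lambda_A$ inherits $\sigma$-additivity from $\lambda_T$, which is itself built as the supremum of the finite measures $\lambda_R$ on finite unions of segments. For the atomic piece $\sum_{a\in A}L(a)\,\delta_a$, the key observation is that inside any segment $[0,y]$ only countably many atoms $a\preccurlyeq y$ can satisfy $L(a)\geq 1/n$: the witnessing gaps $[0,a)\setminus A$ immediately below each such $a$ are pairwise disjoint subsegments of $[0,y]$ of length at least $1/n$, and $[0,y]$ has finite length $d(0,y)$. Summing over $n$, the atomic support inside each segment is countable, giving countable additivity on the algebra generated by segments and hence on the Borel $\sigma$-algebra.

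Next, I would handle $\sigma$-finiteness in the form needed by the duality. Every $f\in\mathcal{L}^{1}(\mu_A)$ is supported on $\bigcup_n\{|f|\geq 1/n\}$, each summand having $\mu_A$-measure at most $n\|f\|_{1}$, so integrable functions live on $\sigma$-finite sets. Given $\varphi\in\mathcal{L}^{1}(\mu_A)^{*}$, restriction to such a $\sigma$-finite support yields a signed measure $E\mapsto\varphi(\mathbf{1}_E)$ that is $\mu_A$-absolutely continuous; the classical Radon-Nikodym theorem produces a bounded density $g$ there. Patching these densities along a cofinal family of $\sigma$-finite supports (any two are compatible by uniqueness a.e.) produces $g\in\mathcal{L}^{\infty}(\mu_A)$ satisfying $\varphi(f)=\int f\,g\,d\mu_A$ for all $f\in\mathcal{L}^{1}(\mu_A)$.

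Finally, the isometric identification $\|\varphi\|=\|g\|_{\infty}$ is standard: Hölder's inequality gives $\|\varphi\|\leq\|g\|_{\infty}$, while for the reverse inequality one fixes $\varepsilon>0$, picks a measurable set $E_{\varepsilon}$ of positive finite $\mu_A$-measure on which $|g|\geq\|g\|_{\infty}-\varepsilon$, and tests $\varphi$ against $f_{\varepsilon}=\mathrm{sgn}(g)\mathbf{1}_{E_{\varepsilon}}/\mu_A(E_{\varepsilon})$, which has $\mathcal{L}^{1}$-norm one and satisfies $\varphi(f_{\varepsilon})\geq\|g\|_{\infty}-\varepsilon$. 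Conversely, any $g\in\mathcal{L}^{\infty}(\mu_A)$ visibly defines such a functional of norm $\|g\|_{\infty}$. The main obstacle is the possible non-$\sigma$-finiteness of $\mu_A$ when $T$ is non-separable; the workaround of systematically working on $\sigma$-finite supports of $\mathcal{L}^{1}$-functions, together with the countability of atoms of $\mu_A$ on each segment, keeps the Radon-Nikodym machinery available throughout.
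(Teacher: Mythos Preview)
The paper does not prove this proposition: it is quoted verbatim from Godard \cite[Proposition~2.3]{Godard} and no argument is given. So there is no ``paper's own proof'' to compare against.

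That said, your outline follows the standard route and is broadly sound, but the patching step hides the genuine content of the result. For an arbitrary positive measure $\mu$ that is not $\sigma$-finite, the duality $\mathcal{L}^{1}(\mu)^{*}\cong\mathcal{L}^{\infty}(\mu)$ can fail; it holds precisely when $\mu$ is \emph{localizable} (equivalently, decomposable). Your sentence ``patching these densities along a cofinal family of $\sigma$-finite supports \ldots\ produces $g\in\mathcal{L}^{\infty}(\mu_A)$'' asserts exactly this localizability without justification: compatibility of the local densities almost everywhere on pairwise intersections is automatic, but the existence of a single globally $\mu_A$-measurable function realising all of them simultaneously is not, and this is where non-separable trees bite. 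Godard's own argument exploits the tree structure to exhibit an explicit decomposition of $(A,\mu_A)$ into a disjoint family of $\sigma$-finite measurable pieces (essentially segments and their attached atoms), from which localizability follows. Your countability observation about atoms along each segment is a correct ingredient, but you would still need to produce such a global decomposition of $A$ and check that a function measurable on each piece is measurable on $A$; as written, that step is missing.
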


\begin{definition}[Differentiation on an $\mathbb{R}$-tree] \label{diff}
Let $T$ be a pointed $\R$-tree, $A$ a closed subset of $T$ containing $0$ and $f : A \to \R$. For $a \in A$, let $\tilde{a}$ be the unique point in $[0 , a ]$ such that $d(a,\tilde{a})=L(a)$. If $L(a)>0$, we say that $f$ is differentiable at $a$ with derivative $$f'(a)= \dfrac{f(a)-f(\tilde{a})}{L(a)}.$$ If $L(a)=0$, we say that $f$ is differentiable at $a$, whenever the limit
$$
\lim_{\substack{x \to \tilde{a} \\ x \in [0 , a) \cap A}} \dfrac{f(a)-f(x)}{d(x,a)}
$$
exists, and we denote by $f'(a)$ the value of this limit.
\end{definition}
\begin{proposition}\cite[pp.~4313-4314]{Godard}
	Let $f$ be a real valued Lipschitz function defined on an $\R$-tree $T$. Then, $f$ is differentiable almost everywhere on $T$ and

$$
f(x)-f(0)=\int_{[ 0 , x ]} f' d \lambda_T,
$$
\noindent for all $x\in T$.
\end{proposition}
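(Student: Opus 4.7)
My plan is to reduce the proof to the classical Rademacher theorem on $\mathbb{R}$ via the isometric parametrization of segments granted by Definition~\ref{deftrees}(i). Fix $x\in T$ and consider the isometry $\phi_{0x}\colon [0,d(0,x)]\to T$ with image $[0,x]$. Setting $g:=f\circ\phi_{0x}$, we obtain a Lipschitz function on the interval $[0,d(0,x)]$ with the same Lipschitz constant as $f$. The classical Rademacher theorem on $\mathbb{R}$ then yields a.e.\ differentiability of $g$, together with the absolute continuity identity $g(d(0,x))-g(0)=\int_0^{d(0,x)}g'(t)\,dt$.

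Next, I would identify this classical derivative with $f'(a)$ as defined in Definition~\ref{diff}. Taking $A=T$, for any $a=\phi_{0x}(t)$ with $t>0$ the segment $[0,a)$ accumulates at $a$, so $L(a)=0$ and $\tilde{a}=a$. By the uniqueness part of Definition~\ref{deftrees}(i), the restriction $\phi_{0x}|_{[0,t]}$ coincides with $\phi_{0a}$; therefore $[0,a)$ is exactly the image of $[0,t)$. Points $y\in[0,a)$ approaching $a$ correspond, via $\phi_{0x}^{-1}$, to parameters $s\in[0,t)$ approaching $t$ from the left, with $d(y,a)=t-s$. Consequently, at every $t$ where $g$ is differentiable, the limit in Definition~\ref{diff} exists and equals $g'(t)$, so $f'(a)=g'(t)$. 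Combining with the absolute continuity identity and the defining relation $\int_{[0,x]}h\,d\lambda_T=\int_0^{d(0,x)}h(\phi_{0x}(t))\,dt$ of the length measure yields
\begin{equation*}
f(x)-f(0)=\int_{[0,x]}f'\,d\lambda_T.
\end{equation*}

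To upgrade a.e.\ differentiability from individual segments to the whole tree, I would use the definition $\lambda_T(N)=\sup_{R\in\mathcal{R}}\lambda_R(N)$, where $N$ denotes the set of non-differentiability points. It suffices to show $\lambda_S(N)=0$ for every segment $S=[u,v]\subset T$. For such $S$, the branching point $c$ of $[0,u]$ and $[0,v]$ gives a decomposition $[u,v]=[u,c]\cup[c,v]\subseteq [0,u]\cup[0,v]$, so the previous argument applied to the two geodesics $[0,u]$ and $[0,v]$ suffices to conclude $\lambda_S(N)=0$.

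The main obstacle lies in the second paragraph: verifying that the intrinsic limit along $[0,a)$ prescribed by Definition~\ref{diff} coincides with the one-sided derivative obtained from Rademacher applied to the parametrization. The reduction works cleanly thanks to the uniqueness of segments, but for the final step one must also ensure that for a generic segment $[u,v]$, the direction from $a\in(u,v)$ towards $0$ aligns with one of the two directions along $\phi_{uv}$, which is again a consequence of the tree property via the branching point $c$.
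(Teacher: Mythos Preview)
The paper does not supply its own proof of this proposition; it is quoted from Godard \cite[pp.~4313--4314]{Godard} without argument. Your approach---pulling back along the isometric parametrization $\phi_{0x}$, invoking the one-dimensional Rademacher/Lebesgue theorem for the Lipschitz function $g=f\circ\phi_{0x}$, identifying the one-sided tree derivative of Definition~\ref{diff} with $g'$, and then globalizing the null set through the supremum definition of $\lambda_T$---is correct and is exactly the argument Godard gives in the cited pages.

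The worry you raise in your final paragraph is not a genuine obstacle. The tree derivative $f'(a)$ is defined intrinsically via the segment $[0,a)$, so no ``alignment'' with an arbitrary segment $[u,v]$ is ever needed: once you know that the non-differentiability set $N$ meets each geodesic $[0,w]$ in a $\lambda_{[0,w]}$-null set, the inclusion $[u,v]\subset[0,u]\cup[0,v]$ (via the median point $c$) together with the fact that $\phi_{uv}^{-1}\circ\phi_{0u}$ and $\phi_{uv}^{-1}\circ\phi_{0v}$ are isometries between subintervals (hence preserve Lebesgue null sets) immediately gives $\lambda_{[u,v]}(N)=0$.
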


\noindent The following theorem characterizes subsets of $\R$-trees in terms of their Lipschitz-free spaces.

\begin{theorem}\cite[Theorem~4.2]{Godard}\label{godardtrees}
Let $(X,D)$ be a complete pointed metric space. Then the following assertions are equivalent:
\begin{itemize}
\item[(i).] $\mathcal{F}(X)$ is isometrically isomorphic to a subspace of an $\mathcal{L}^1$-space;
\item[(ii).] $(X,D)$ isometrically embeds into an $\R$-tree.
\end{itemize}
\end{theorem}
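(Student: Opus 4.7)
The plan is to handle the two implications separately.

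For (ii)$\Rightarrow$(i), the strategy is to combine functoriality of the free-space construction with an explicit $\mathcal{L}^1$-realisation of $\F(T)$ for a pointed $\R$-tree $T$. By the McShane--Whitney extension theorem, any isometric embedding $\iota\colon X \hookrightarrow T$ induces a metric quotient map $\iota^*\colon \Lip_0(T) \twoheadrightarrow \Lip_0(X)$ at the level of Lipschitz duals; passing to preduals produces an isometric linear embedding $\F(X) \hookrightarrow \F(T)$. It therefore suffices to embed $\F(T)$ isometrically into some $\mathcal{L}^1$-space. Using the length measure $\mu_T$ of Definition~\ref{mu} and the differentiation operator $D\colon \Lip_0(T) \to \mathcal{L}^\infty(\mu_T)$, $D(f) = f'$, from Definition~\ref{diff}, the reconstruction formula $f(x) = \int_{[0,x]} f'\, d\lambda_T$ shows that $D$ is a surjective linear isometry. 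Combining Proposition~\ref{dualL1} with the identity
\begin{equation*}
\langle \delta_x, f \rangle \;=\; f(x) \;=\; \int \mathbf{1}_{[0,x]}\, f'\, d\lambda_T \;=\; \langle \mathbf{1}_{[0,x]}, D(f) \rangle,
\end{equation*}
the assignment $\delta_x \mapsto \mathbf{1}_{[0,x]}$ extends by linearity and density to the desired isometric embedding $\F(T) \hookrightarrow \mathcal{L}^1(\mu_T)$.

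For (i)$\Rightarrow$(ii), the plan is to extract from the $\mathcal{L}^1$-embedding of $\F(X)$ the classical four-point characterization of $\R$-tree-embeddable metric spaces: $(X,D)$ isometrically embeds into some $\R$-tree if and only if
\begin{equation*}
D(x_1,x_2) + D(x_3,x_4) \;\leq\; \max\bigl\{D(x_1,x_3) + D(x_2,x_4),\; D(x_1,x_4) + D(x_2,x_3)\bigr\}
\end{equation*}
for all $x_1,x_2,x_3,x_4 \in X$. Given an isometric linear embedding $\Psi\colon \F(X) \hookrightarrow \mathcal{L}^1(\Omega,\mu)$, the adjoint $\Psi^*\colon \mathcal{L}^\infty(\Omega,\mu) \twoheadrightarrow \Lip_0(X)$ realises $\Lip_0(X)$ as a metric quotient of an $\mathcal{L}^\infty$-space. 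The idea is to exploit this, together with the preservation of the norm of \emph{every} molecule $\sum a_i \delta_{x_i}$ (not only pairwise differences) under $\Psi$---in particular of the three pairings $\delta_{x_1} \pm \delta_{x_2} \pm \delta_{x_3} \pm \delta_{x_4}$ with $\sum a_i = 0$, whose $\F(X)$-norms are explicit Kantorovich--Rubinstein transport costs---and to translate these identities, via pointwise/cut-decomposition analysis in $\mathcal{L}^1(\Omega,\mu)$, into the scalar four-point inequality above. Once the four-point condition is in hand, a tight-span construction produces the isometric embedding into an $\R$-tree, with completeness of $X$ ensuring the image is closed.

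The main obstacle is entirely located in (i)$\Rightarrow$(ii). A mere isometric embedding $X \hookrightarrow \mathcal{L}^1$ is insufficient: the four-cycle $C_4$ with graph distance embeds in $\mathcal{L}^1$ (as the vertices of the unit square in $\ell^1$) yet violates the four-point condition, and consequently $\F(C_4)$ cannot embed in any $\mathcal{L}^1$-space. The essential point is therefore to leverage the much stronger rigidity imposed by preserving not only pairwise distances but the entire Kantorovich--Rubinstein cost geometry of signed measures supported on $X$. Converting this rigidity into the scalar four-point inequality---plausibly through cut-decompositions of $\mathcal{L}^1$-functions, together with an extremality analysis of the Lipschitz functions dual to the three pairing molecules---is the heart of the argument and the step where the $\R$-tree differentiation theory of Definitions~\ref{mu} and~\ref{diff} must be used in an essential way.
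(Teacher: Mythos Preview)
The paper does not prove this theorem: it is quoted verbatim from Godard's article \cite{Godard} and used as a black box (see the citation marker \cite[Theorem~4.2]{Godard} attached to the statement). There is therefore no proof in the present paper to compare your proposal against.

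That said, a brief remark on your outline relative to Godard's original argument. Your treatment of (ii)$\Rightarrow$(i) is essentially Godard's: the isometry $\Lip_0(T)\cong\mathcal{L}^\infty(\mu_T)$ via $f\mapsto f'$ and the predual embedding $\delta_x\mapsto\mathbf{1}_{[0,x]}$ is exactly how Proposition~2.3 of \cite{Godard} is used. For (i)$\Rightarrow$(ii), Godard also reduces to the four-point condition, but his argument is more concrete than what you sketch: he fixes four points, considers the specific element $m=\delta_{x_1}-\delta_{x_2}+\delta_{x_3}-\delta_{x_4}\in\F(X)$, computes $\|m\|_{\F(X)}$ explicitly as the maximum on the right-hand side of the four-point inequality (this is a direct combinatorial computation of the Kantorovich--Rubinstein norm on a four-point support), and then observes that the image of $m$ in $\mathcal{L}^1$ has norm at most $D(x_1,x_2)+D(x_3,x_4)$ by the triangle inequality applied to $\mathbf{1}$-type representatives. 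Your proposal gestures at this (``three pairing molecules'', ``cut-decompositions'') but does not isolate the single molecule and the single norm computation that actually carry the argument; in particular, no differentiation theory on $\R$-trees is needed for this direction, contrary to your final sentence.
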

We are now ready to prove our result on $\R$-trees.

\begin{proposition}
\label{L1(T)} Let $(X,D)$ be a subset of an $\R$-tree $T$. Then, $(X,D)$
satisfies property $(S^{\ast })$ with respect to the cone
\begin{equation*}
P=\{\phi \in \mathrm{Lip}_0(X,D)\,:\,\phi' \geq 0\},
\end{equation*}
\end{proposition}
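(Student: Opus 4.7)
The plan is to reduce the statement to an $\mathcal{L}^1/\mathcal{L}^\infty_+$-duality calculation via Godard's representation of free spaces over subsets of $\R$-trees. Without loss of generality I assume $X$ is a closed subset of $T$ containing the base point $0$ (otherwise replace $X$ by its closure inside $T$, which does not affect the identities defining $(S^\ast)$). Applying the differentiation operator of Definition~\ref{diff} together with Proposition~\ref{dualL1} and the ingredients underlying Theorem~\ref{godardtrees}, I obtain the canonical isometries $\Lip_0(X,D)\cong\mathcal{L}^\infty(\mu_X)$ via $\phi\mapsto\phi'$ and $\F(X,D)\cong\mathcal{L}^1(\mu_X)$ via $\delta_x\mapsto\mathbf{1}_{[0,x]\cap X}$, compatible with the pairing $\langle\phi,Q\rangle=\int \phi' f\,d\mu_X$. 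Under these identifications, the cone $P$ corresponds exactly to the positive cone $\mathcal{L}^\infty_+(\mu_X)$.

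From here, the decomposition condition~\eqref{eq:P} for $P$ is immediate from $\phi'=(\phi')^{+}-(\phi')^{-}$ in $\mathcal{L}^\infty$, so $P$ induces the asymmetrization $D_P$. The explicit form of $D_P$ follows from the tree geometry: for $x,y\in X$ the segments $[0,x]$ and $[0,y]$ share a longest common initial segment $[0,c]$ (the \emph{confluence}), and a direct computation of the supremum defining $D_P$ gives $D_P(x,y)=d(c,y)$, which is nontrivial whenever $X$ contains at least two points. In particular $D_P(y,x)=0$ whenever $x\preccurlyeq y$. To establish $SL\subseteq P$ (the reverse inclusion $P\subseteq SL$ is Lemma~\ref{isoinjec}(i)), I take $\psi\in\SLip_0(X,D_P)$: by Remark~\ref{lip}(ii) and the obvious inequality $D_P\leq D$, the function $\psi$ is Lipschitz on $(X,D)$, hence differentiable $\mu_X$-a.e.; the vanishing of $D_P(y,x)$ for $x\preccurlyeq y$ forces $\psi(x)\leq\psi(y)$, so $\psi$ is non-decreasing along every geodesic issued from~$0$; applying Definition~\ref{diff} in both regimes $L(a)>0$ and $L(a)=0$ then yields $\psi'\geq 0$ a.e., i.e.\ $\psi\in P$. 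This proves property $(S)$.

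For the $(S^\ast)$ identity, under the $\mathcal{L}^1$-identification $Q\leftrightarrow f$, the computation is the $\mu_X$-analogue of Lemma~\ref{duall1}: optimizing $\int fg\,d\mu_X$ over $g\in\mathcal{L}^\infty_+(\mu_X)$ with $\|g\|_\infty\leq 1$ gives $\|Q|_{\F_P}=\int f^{+}\,d\mu_X$, and symmetrically $\|\!-\!Q|_{\F_P}=\int f^{-}\,d\mu_X$, so their sum equals $\|f\|_1=\|Q\|_\F$. The main technical obstacle I foresee is precisely the reverse inclusion $SL\subseteq P$: it crucially exploits the \emph{tree} structure (uniqueness of geodesics from $0$ and the fact that $[0,x]\cap[0,y]$ is itself a segment) to convert the scalar condition ``$D_P(y,x)=0$ whenever $x\preccurlyeq y$'' into the pointwise non-negativity of $\psi'$, and it requires careful handling of both regimes in Godard's differentiation scheme; the remaining steps are essentially bookkeeping across the Godard isometries and a non-symmetric $\mathcal{L}^1$/$\mathcal{L}^\infty$ computation that extends by density from $\mathrm{span}(\delta(X))$ to all of $\F(X,D)$.
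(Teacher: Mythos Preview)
Your proposal is correct and follows essentially the same route as the paper: both arguments pass through Godard's isometry to identify $P$ with $\mathcal{L}^\infty_+$, use the vanishing of $D_P(y,x)$ for $x\preccurlyeq y$ to force monotonicity (hence $\psi'\geq 0$) of semi-Lipschitz functions, and conclude $(S^\ast)$ from the elementary identity $\|f\|_1=\int f^{+}+\int f^{-}$. Your treatment is slightly more explicit (the confluence formula for $D_P$, the two differentiation regimes, the density argument), and you work with $\mathcal{L}^1(\mu_X)$ after closing $X$ whereas the paper embeds directly into $\mathcal{L}^1(T)$, but these are cosmetic variations of the same proof.
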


\begin{proof}
Thanks to Theorem \ref{godardtrees}, we may use Godard's embedding, denoted by $\Phi_\ast$, to isometrically identify $\mathcal{F}(X,D)$ with a subspace $Y$ of $\mathcal{L}^1(T)$, by sending $\delta_x\in \mathcal{F}(X,D)$ to $\Phi_\ast(\delta_x)=\mathds{1}_{[0,x]}\in \mathcal{L}^1(T)$. This embedding is the restriction to $\mathcal{F}(X,D)$ of the pre-adjoint of the (weak-star to weak-star continuous) isometry $\Phi:\mathcal{L}^\infty(T)\to \mathrm{Lip}_0(T)$ defined by $\Phi(g)(x)=\int_{[0,x]} g d\mu_X$ for any $x\in T$.

Let $\iota:(X,D)\to (Y, \|\cdot\|_1)$ be the isometric injection induced by Godard's embedding $\Phi$. We keep the same notation
$\Vert \cdot |_{\mathcal{F}_{P}}$ for the asymmetric hemi-norm induced in $Y$ by this embedding. The $P$-asymmetrization of the norm of $Y$ is given by
\begin{equation*}
\Vert f|_{\mathcal{F}_{P}}=\sup_{\substack{ \phi \geq 0 \\ \Vert \phi \Vert
_{\infty }\leq 1}}\langle \phi ,f\rangle =\sup_{\substack{ \phi \geq 0 \\
\Vert \phi \Vert _{\infty }\leq 1}}\int_{X }f\phi  d\mu_X=\int_{X } f^+:=\Vert f|_{1,+},
\end{equation*}
\noindent for all $f\in Y$, where $f^+(t)=\max\{f(t),0\}$ for any $t\in T$. Therefore, $D_{P}(y,x)=\Vert \iota (x)-\iota
(y)|_{\mathcal{F}_{P}}=0$ whenever $\iota (x)\leq \iota (y)$ almost everywhere, which is equivalent to $x\preccurlyeq y$ in the order of~ $T$. Then, for $\varphi \in SL=\mathrm{SLip}_{0}(X,D_{P})$ and $x,y\in X$ such that $x\preccurlyeq y$, we have $\varphi(x)-\varphi
(y)\leq \|\varphi|_S D_P(y,x)=0$, and therefore $x\preccurlyeq y$ yields $\varphi(x)\leq \varphi(y)$.\smallskip\newline
It is easy to check that $\Phi^{-1}(\varphi)=\varphi'\in \mathcal{L}_\infty(T)$ for all $\varphi\in L$. The monotonicity property of semi-Lipschitz functions proved above yields that $\varphi'\geq 0$, so $\varphi$ belongs to the cone $P$. Therefore, $SL\subset P$ and in view of Lemma~\ref{isoinjec}(i) we deduce that $SL=P$, hence $(X,D)$ satisfies property $\mathbf{(S)}$.\smallskip\newline
Let $g\in \mathcal{F}(X,D)$, and $f=\Phi_\ast(g)$. Then
\begin{align*}
\Vert g\Vert _{\mathcal{F}}= \Vert f \Vert_1 &=\sup_{\Vert \phi \Vert _{\infty}\leq 1}\langle
\phi ,f\rangle =\langle f,\mathrm{sgn}(f)\rangle \\
&=\langle f^{+},\mathrm{sgn}
(f)\rangle -\langle f^{-},\mathrm{sgn}(f)\rangle =\Vert f|_{1,+}+\Vert
-f|_{1,+}\\
&=\Vert g|_{\mathcal{F}_P}+\Vert
-g|_{\mathcal{F}_P},
\end{align*}
\noindent where $\mathrm{sgn}(f)$ denotes the sign of $f$. We conclude that $(X,D)$ satisfies property $\mathbf{(S^\ast)}$.
\end{proof}
Combining Propositions~\ref{SS*} and \ref{L1(T)}, we obtain

\begin{proposition}
\label{Rtrees} Let $(X,D)$ be a subset of an $\mathbb{R}$-tree. Then, there exists a canonical
asymmetrization $D_P$ of $D$ such that the symmetrization of the semi-Lipschitz free space $
\mathcal{F}_a(X,D_P)$ is isometrically isomorphic to $\mathcal{F}(X,D)$.
\end{proposition}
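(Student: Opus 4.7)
The plan is to assemble Proposition~\ref{Rtrees} as a direct consequence of the two ingredients already developed, namely Proposition~\ref{L1(T)} (property $(S^{\ast })$ for subsets of $\mathbb{R}$-trees) and Proposition~\ref{SS*}(ii) (which translates property $(S^{\ast })$ into an isometric identification between the symmetrization of $\mathcal{F}_a(X,D_P)$ and $\mathcal{F}(X,D)$). So the proof will essentially be a verification that the cone $P=\{\phi \in \mathrm{Lip}_0(X,D):\phi' \geq 0\}$ supplied by Proposition~\ref{L1(T)} is indeed a \emph{canonical} cone in the sense of Definition~\ref{canoasym}/Remark~\ref{naturalasymmetrizednorm}, followed by one invocation of each of the two propositions.

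First I would set up the canonical linear isometry that makes the $P$-asymmetrization canonical. By Godard's embedding used in the proof of Proposition~\ref{L1(T)}, the mapping $\Phi : \mathcal{L}^{\infty}(T) \to \mathrm{Lip}_0(T)$ defined by $\Phi(g)(x) = \int_{[0,x]} g\, d\mu_{X}$ is a linear isometry identifying $L=\mathrm{Lip}_0(X,D)$ (or the quotient of $\mathrm{Lip}_0(T)$ that restricts to $X$) with a Banach lattice, namely (a subspace of) $\mathcal{L}^{\infty}$. Under this identification, the cone $P$ is exactly $\Phi(\mathcal{L}^{\infty}_{+})$, i.e.\ the preimage of the non-negative cone of the lattice. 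Thus $P$ is of the form $\{\phi \in L : T\phi \geq 0\}$ for a linear isometry $T$, which is precisely the template for a canonical cone in Definition~\ref{canoasym}. In particular the decomposition $\phi = \phi^+ - \phi^-$ in $\mathcal{L}^{\infty}$ transferred through $\Phi$ gives~\eqref{eq:P}, so that $D_P$ is a well-defined canonical asymmetrization of $D$.

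Next I would apply Proposition~\ref{L1(T)}: with this same cone $P$, the space $(X,D)$ satisfies property $(S^{\ast })$, that is, $SL = \mathrm{SLip}_0(X,D_P) = P$ and $\|Q\|_{\mathcal{F}} = \|Q|_{\mathcal{F}_P} + \|\!\!-\!\!Q|_{\mathcal{F}_P}$ for every $Q\in \mathcal{F}(X,D)$. Finally, Proposition~\ref{SS*}(ii), applied with this $P$, concludes that the $d^{s}$-symmetrization of $(\mathcal{F}_a(X,D_P),\|\cdot|_{\mathcal{F}_a})$ is isometrically isomorphic to $(\mathcal{F}(X,D),\|\cdot\|_{\mathcal{F}})$, which is exactly the conclusion of the proposition.

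No step should present a real difficulty, since the hard work is absorbed into the two preceding propositions; the only point that needs care is the bookkeeping verification that the cone $P$ produced by Proposition~\ref{L1(T)} coincides with the one arising from the canonical linear isometry between $\mathrm{Lip}_0(X,D)$ and an $\mathcal{L}^{\infty}$-lattice, so that $D_P$ genuinely qualifies as a \emph{canonical} asymmetrization (and not merely as a $P$-asymmetrization for some ad-hoc cone). Once that identification is spelled out, the proof is a one-line combination of Propositions~\ref{L1(T)} and~\ref{SS*}(ii).
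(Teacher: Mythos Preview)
Your proposal is correct and follows essentially the same approach as the paper, which simply states that the result is obtained by combining Propositions~\ref{SS*} and~\ref{L1(T)}. The extra bookkeeping you include---verifying that the cone $P=\{\phi\in\mathrm{Lip}_0(X,D):\phi'\geq 0\}$ is canonical in the sense of Remark~\ref{naturalasymmetrizednorm}---is a reasonable elaboration of what the paper leaves implicit.
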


\section{Linearization of semi-Lipschitz functions: a universal property}\label{linearization}

As it was pointed out in Remark~\ref{remarkim}, Definition~\ref{slip} (as well as Definitions~\ref{def-SL-norm} and~\ref{Aps}) can be readily generalized to functions between quasi-metric spaces, as well as to the case of semi-Lipschitz functions with values in a normed cone.

Let $(C,\|\cdot|)$ be a normed cone, and denote as $d_e^c(u,v)$ its corresponding extended quasi-metric ({\em c.f.} Proposition \ref{extendedqm}). We next introduce the notion of semi-Lipschitz function with values in $C$.

\begin{definition} (Semi-Lipschitz function with values in a normed cone).
Let $(X,d)$  be a quasi-metric space. A function
$f:X\to C$ is said to be a semi-Lipschitz function if there exists $L\geq 0$ such that for every $x,y\in X$ we have
\begin{equation}\label{SL2}
d_e^c(f(y),f(x))\leq L d(y,x). 
\end{equation}
In this case, the semi-Lipschitz conic-norm of a function $f:X \to C$ is defined by
$$\|f|_S:=\inf\big\{L>0: \,\,\eqref{SL2} \text{ holds}\big\}.$$
The class of semi-Lipschitz functions on $X$ with values in $C$ is denoted as $\SLip(X,C)$.
Also, if $x_0\in X$ is a base point, we define the asymmetric pivot space
$$\mathrm{SLip}_0(X,C):=\{f\in \SLip(X,C)\text{ such that }f(x_0)=0\}.$$
\end{definition}\smallskip

As in Proposition \ref{SLcri}, a function $f:X\to C$ is semi-Lipschitz if and only if $\|f|_S<\infty$. Moreover, if $d$ is a quasi-metric and $f:X\to C$ is semi-Lipschitz, then
$$\|f|_S=\sup_{x\neq y}\frac{\max\{d_e^c(f(y),f(x)),0\}}{d(y,x)}=\sup_{x\neq y}\frac{d_e^c(f(y),f(x))}{d(y,x)}<\infty.$$
If $d$ is a quasi-hemi-metric, we
use the same expression for $\|f|_S$ as that in Proposition \ref{SLcri}(ii).
\smallskip

Given a quasi-metric space $(X,d)$ with base point $x_0$, for the following result consider the isometric injection $\delta:(X,d)\to\left(\mathrm{SLip}_0(X)^*,\|\cdot|^*\right)$ of Proposition \ref{isoinj}.
We next show that the semi-Lipschitz free space over a quasi-metric space $(X,d)$ with base point $x_0$ is characterized by the following universal property, which is an analog of the Lipschitz case (see \cite[Lemma~2.2]{GK_2003}).

\begin{theorem} [Linearization of semi-Lipschitz functions]\label{Lslf}
Let $(X,d)$ be a quasi-metric space with base point $x_0$. Suppose that $(C,\|\cdot|)$ is a normed cone and $f\in \mathrm{SLip}_0(X,C)$. Then there exists a unique linear map $T_f:\F_a(X)\to C$ extending $f$, i.e. $T_f \circ \delta=f$ and  $\|T_f|= \|f|_S$.
\end{theorem}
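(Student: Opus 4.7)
The plan is to define $T_f$ on the dense subspace $\mathrm{span}(\delta(X))$ of $\mathcal{F}_a(X)$ by the rule $\sum_i \lambda_i \delta_{x_i} \mapsto \sum_i \lambda_i f(x_i)$ and then to extend it to all of $\mathcal{F}_a(X)$ by bicontinuity. The key mechanism---which simultaneously supplies well-definedness and the norm identity---is a duality argument via $C^*$, exploiting the identification $\mathcal{F}_a(X)^* = \mathrm{SLip}_0(X)$ from Theorem~\ref{predual}.

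The first observation is that for every $\psi \in C^*$ with $\|\psi|^* \leq 1$, the composition $\psi \circ f$ belongs to $\mathrm{SLip}_0(X)$ with $\|\psi \circ f|_S \leq \|f|_S$. Indeed, Proposition~\ref{caract} gives $\psi(u) - \psi(v) \leq d_e^c(v,u)$ for all $u,v \in C$, so combined with \eqref{SL2} this yields $(\psi \circ f)(x) - (\psi \circ f)(y) \leq \|f|_S \, d(y,x)$, while $(\psi \circ f)(x_0) = \psi(0) = 0$. With this in hand, for any $Q = \sum_i \lambda_i \delta_{x_i} \in \mathrm{span}(\delta(X))$, viewing $\sum_i \lambda_i f(x_i)$ in the enveloping asymmetric normed space of the cancellative cone $C$, one has
$$\psi\!\left(\sum_i \lambda_i f(x_i)\right) = \sum_i \lambda_i (\psi \circ f)(x_i) = Q(\psi \circ f) \leq \|\psi \circ f|_S \cdot \|Q|_{\mathcal{F}_a} \leq \|f|_S \cdot \|Q|_{\mathcal{F}_a}.$$
If two representations $\sum_i \lambda_i \delta_{x_i}$ and $\sum_j \mu_j \delta_{y_j}$ give the same $Q \in \mathcal{F}_a(X)$, then this identity applied to their difference forces $\psi\bigl(\sum_i \lambda_i f(x_i) - \sum_j \mu_j f(y_j)\bigr) = 0$ for all $\psi \in C^*$; the asymmetric Hahn--Banach theorem (\cite[Theorem~2.2.1]{cobzas2012functional}) then implies that the two values coincide, so $T_f$ is well defined on $\mathrm{span}(\delta(X))$, and taking the supremum over $\psi$ in the unit ball of $C^*$ yields $\|T_f(Q)| \leq \|f|_S \, \|Q|_{\mathcal{F}_a}$.

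This estimate allows $T_f$ to extend (uniquely) by bicontinuity to the bicompletion $\mathcal{F}_a(X)$, remaining linear and satisfying $\|T_f| \leq \|f|_S$. The reverse inequality is immediate from $T_f \circ \delta = f$ together with the isometric identity $d(y,x) = \|\delta_x - \delta_y|_{\mathcal{F}_a}$ of Proposition~\ref{isoinj} and the supremum formula in Proposition~\ref{SLcri}. Uniqueness follows from the density of $\mathrm{span}(\delta(X))$ in $\mathcal{F}_a(X)$ for the symmetrized topology combined with the bicontinuity of $T_f$. The main obstacle I anticipate is the separation/well-definedness step: one must carefully justify that $\sum_i \lambda_i f(x_i)$ can be interpreted as a single element of $C$ (or of its enveloping vector space) and that its vanishing against every $\psi \in C^*$ forces it to be zero---a use of Hahn--Banach in the asymmetric/conic setting that requires some care when $C$ is not itself a linear space.
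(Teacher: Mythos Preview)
Your approach shares the paper's central mechanism---the observation that $\psi\circ f\in\mathrm{SLip}_0(X)$ with $\|\psi\circ f|_S\le\|f|_S$ for every $\psi$ in the unit ball of $C^*$---and the norm identity and uniqueness arguments are essentially the same. The difference lies in how the map is built. The paper defines $T_f$ directly on all of $\mathcal{F}_a(X)$ with values in the bidual $C^{**}$, via $T_f(\gamma)(\phi)=\gamma(\phi\circ f)$, and only afterwards argues that the values actually lie in $i_C(C)\cong C$. This bidual detour is precisely what resolves the obstacle you flagged: it makes $T_f(\gamma)$ well defined as a bounded linear functional on $C^*$ without ever having to interpret a signed combination $\sum_i\lambda_i f(x_i)$ inside the cone $C$ or an ``enveloping asymmetric normed space'' (note that $C$ is not assumed cancellative, so such an envelope may not even be available).

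There is a second gap in your route that the $C^{**}$ device also circumvents: your extension step passes from $\mathrm{span}(\delta(X))$ to its bicompletion $\mathcal{F}_a(X)$ by bicontinuity, which requires limits to exist in the target. But $C$ is not assumed bicomplete, so a bounded map into $C$ need not extend to the bicompletion while keeping its values in $C$. The paper avoids this by having $T_f$ already defined on the whole of $\mathcal{F}_a(X)$ from the outset. In short, your plan is morally the same proof, but the $C^{**}$ formulation is what makes the two delicate points (signed combinations in a cone, and extension without completeness of the target) go through cleanly.
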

\begin{proof}
If $f\in\SLip_0(X,C)$, then $T_f\colon\F_a(X)\to C^{**}$ defined by
$$
T_f(\gamma)(\phi)=\gamma(\phi\circ f)\qquad \left(\gamma\in\F_a(X) , \; \phi\in C^*\right)
$$
belongs to the set of bounded linear mappings from $\F_a(X)$ into $C^{**}$, and
\begin{align*}
\left\|T_f\right|=\sup_{\left\|\gamma\right|^{*}\leq 1}\left\|T_f(\gamma)\right|^{**}
                  &=\sup_{\left\|\gamma\right|^{*}\leq 1}\sup_{\left\|\phi\right|^{*}\leq 1}T_f(\gamma)(\phi)\\
                  &=\sup_{\left\|\phi\right|^{*}\leq 1}\sup_{\left\|\gamma\right|^{*}\leq 1}\gamma(\phi\circ f)
                  =\sup_{\left\|\phi\right|^{*}\leq 1}\|\phi\circ f|_S
                  \leq \|f|_S.
\end{align*}
Observe that the last inequality is accomplished by taking into account that $\phi$ is linear and
\begin{align*}
\sup_{\left\|\phi\right|^{*}\leq 1}\|\phi\circ f|_S
&=\sup_{\left\|\phi\right|^{*}\leq 1}\sup_{d(y,x)>0}\left\{\frac{(\phi\circ f)(x)-(\phi\circ f)(y)}{d(y,x)}\right\}\\
&=\sup_{\left\|\phi\right|^{*}\leq 1}\sup_{d(y,x)>0}\left\{\frac{\phi(f(x)-f(y))}{d(y,x)}\right\}
\leq \sup_{\left\|\phi\right|^{*}\leq 1}\left\|\phi\right|^{*}\|f|_S
=\|f|_S.
\end{align*}
(By abuse of notation, we still denote by $\left\|T_f\right|=\sup_{\left\|\gamma\right|^{*}\leq 1}\left\|T_f(\gamma)\right|^{*}$ the conic-norm of the linear function $T_f\colon\F_a(X)\to C^{**}$).
Furthermore, if $i_{C}\colon C\to C^{**}$ is the canonical injection, we have
\begin{align*}
\langle T_f (\delta(x)),\phi\rangle=T_f(\delta(x))(\phi)&=\delta(x)(\phi\circ f)\\
&=\phi(f(x))=i_{C}(f(x))(\phi)=\langle i_{C}(f(x)), \phi \rangle
\end{align*}
for every $x\in X$ and $\phi\in C^*$, and hence $T_f(\delta(x))=i_{C}(f(x))\in i_{C}(C)$ for every $x\in X$. 
This yields that $T_f(\gamma)\in i_{C}(C)$ for every $\gamma\in\F_a(X)$. Identifying $i_{C}(f(x))\in i_{C}(C)$ with $f(x)\in C$, we have $T_f\in \mathcal{L}(\F_a(X), C)$ and $T_f\circ \delta=f$. So, since $T_f\circ \delta=f$ and $\delta$ is an isometry (Proposition~\ref{isoinj}), we deduce that
\begin{align*}
\|f|_S&=\sup_{d(y,x)>0}\left\{\frac{d_e^c(f(y),f(x))}{d(y,x)}\right\}\\
&=\sup_{d(y,x)>0}\left\{\frac{\|T_f (\delta(x))-T_f(\delta(y))|}{d(y,x)}\right\}=\sup_{d(y,x)>0}\left\{\frac{\|T_f(\delta(x)-\delta(y))|}{d(y,x)}\right\}\\
&\leq\sup_{d(y,x)>0}\left\{\frac{\|T_f|\|\delta(x)-\delta(y)|^{*}}{d(y,x)}\right\}
=\|T_f|\sup_{d(y,x)>0}\left\{\frac{\|\delta(x)-\delta(y)|^{*}}{\|\delta(x)-\delta(y)|^{*}}\right\}
=\|T_f|.
\end{align*}
Thus $\left\|T_f\right|=\|f|_S$.
Assume now that there exists a linear bounded mapping $S_f:\F_a(X)\to C$ such that $S_f\circ\delta=f$. Then it is clear that $S_f(\delta(x))=T_f(\delta(x))$ for all $x\in X$ and, by the definition of $\F_a(X)$, it follows that $S_f=T_f$.
\end{proof}
\begin{remark}[Universal property]
Equivalently, the condition $T_f \circ \delta=f$ means that the following diagram commutes
$$
\begin{tikzpicture}
  \node (X) {$X$};
  \node (FX1) [below of=X] {};
 \node (FX) [below of=FX1] {$\F_a(X)$};
  \node (E1) [right of=FX] { };
  \node (E) [right of=E1] {$C$};
  \draw[->] (X) to node {$f$} (E);
  \draw[->] (X) to node [swap] {$\delta$} (FX);
  \draw[->, dashed] (FX) to node [swap] {$T_f$} (E);
\end{tikzpicture}
$$
Furthermore, as a consequence of the universal property that we have just proved, it is not difficult to establish that the mapping $f\mapsto T_f$ is an isometric isomorphism of $\SLip_0(X,C)$ into the cone of bounded linear mappings $L(\F_a(X),C)$, which constitutes another proof of Theorem \ref{predual} for the particular case $C=\mathbb{R}$. Indeed, we already know that the mapping $f\mapsto T_f$ is an isometry of $\SLip_0(X,\mathbb{R})$ onto $\F_a(X)^*$. Now, given $T\in L(\F_a(X),C)$, we can define a mapping $f\colon X\to C$ by $f(x)=T(\delta(x))$ for all $x\in X$. Since
$$
d_e^c(f(y),f(x))=d_e^c(T(\delta(y)),T(\delta(x)))
\leq \left\|T\right|\|\delta(x)-\delta(y)|^{*}
=\left\|T\right|d(y,x)
$$
for all $x,y\in X$, the function $f$ is in $\SLip_0(X,C)$. By the universal property of $\F_a(X)$, there is a unique operator $T_f\in L(\F_a(X),C)$ such that $T_f\circ \delta=f$. Hence $T=T_f$ and thus the mapping $f\mapsto T_f$ is a surjective isometry.
\end{remark}

The proof of the following result is immediate from Theorem \ref{Lslf}.
\begin{corollary}[Linearization of quasi-metric morphisms]\label{linecor}
Let $(X_1,d_1)$ and $(X_2,d_2)$ be two pointed quasi-metric spaces, and $f\in \mathrm{SLip}_0(X_1,X_2)$. Then there is a unique linear map $\hat{T}_f:\F_a(X_1)\to \F_a(X_2)$ such that $\hat{T}_f\circ \delta_{X_1}=\delta_{X_2}\circ f$, i.e. the diagram
$$
\begin{tikzpicture}
  \node (X) {$X_1$};
  \node (FX1) [below of=X] {};
 \node (Y) [right of=X] {};
 \node (Z) [right of=Y] {$X_2$ };
  \node (FX) [below of=FX1] {$\F_a(X_1)$};
  \node (E1) [right of=FX] { };
  \node (E) [right of=E1] {$\F_a(X_2)$};
  \draw[->] (X) to node [swap] {$\delta_{X_1}$} (FX);
  \draw[->, dashed] (FX) to node [swap] {$\hat{T}_f$} (E);
  \draw[->] (X) to node {$f$} (Z);
  \draw[->] (Z) to node [swap] {$\delta_{X_2}$} (E);
\end{tikzpicture}
$$
commutes,
and $\|\hat{T}_f|= \|f|_S$, where $\delta_{X_1}$ and $\delta_{X_2}$ are the isometric injections of the quasi-metric spaces $(X_1,d_1)$ and $(X_2,d_2)$ to their free spaces (\emph{c.f.} Proposition \ref{isoinj}).
\end{corollary}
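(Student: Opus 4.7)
The plan is to deduce Corollary~\ref{linecor} as a direct consequence of the universal property established in Theorem~\ref{Lslf}, applied to the composite map $g := \delta_{X_2}\circ f : X_1 \to \F_a(X_2)$. Since $\F_a(X_2)$ is (in particular) a normed cone, it is a legitimate target for Theorem~\ref{Lslf}, so everything reduces to verifying that $g$ is an element of $\SLip_0(X_1, \F_a(X_2))$ with $\|g|_S = \|f|_S$.

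First I would check that $g$ vanishes at the base point: since $f\in\SLip_0(X_1,X_2)$ sends the base point of $X_1$ to the base point of $X_2$, and $\delta_{X_2}$ sends the base point of $X_2$ to the zero element of $\F_a(X_2)$, we indeed have $g(x_0)=0$. Next I would control the semi-Lipschitz constant of $g$ by using that $\delta_{X_2}$ is an isometric embedding (Proposition~\ref{isoinj}): for any $x,y\in X_1$,
\begin{equation*}
d_e^{\F_a(X_2)}\bigl(g(y),g(x)\bigr)\;=\;d_e^{\F_a(X_2)}\bigl(\delta_{X_2}(f(y)),\delta_{X_2}(f(x))\bigr)\;=\;d_2\bigl(f(y),f(x)\bigr)\;\leq\;\|f|_S\,d_1(y,x),
\end{equation*}
which shows $g\in\SLip_0(X_1,\F_a(X_2))$ with $\|g|_S\leq \|f|_S$. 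The reverse inequality $\|f|_S\leq \|g|_S$ is obtained by reading the very same chain of equalities from right to left, so $\|g|_S=\|f|_S$.

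Now I would invoke Theorem~\ref{Lslf} with $C=\F_a(X_2)$: this produces a unique linear map $\hat{T}_f := T_g : \F_a(X_1) \to \F_a(X_2)$ satisfying $\hat{T}_f \circ \delta_{X_1} = g = \delta_{X_2}\circ f$ together with the norm identity $\|\hat{T}_f|=\|g|_S=\|f|_S$. Uniqueness of $\hat{T}_f$ is inherited directly from the uniqueness clause in Theorem~\ref{Lslf}, since any linear map $S:\F_a(X_1)\to\F_a(X_2)$ making the diagram commute is an extension of $g$ to $\F_a(X_1)$.

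There is no real obstacle here; the only point requiring a moment's care is the observation that semi-Lipschitz functions with values in a normed cone, combined with an isometric embedding of the target into a larger asymmetric normed space, retain their semi-Lipschitz constant, and that preservation of the base point is automatic from the definition of $\SLip_0$. Once these bookkeeping items are settled, the corollary is a one-line application of the universal property.
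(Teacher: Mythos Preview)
Your proposal is correct and is exactly the approach the paper takes: the paper simply states that the result is immediate from Theorem~\ref{Lslf}, and you have spelled out precisely the routine verification (that $\delta_{X_2}\circ f\in\SLip_0(X_1,\F_a(X_2))$ with the same semi-Lipschitz constant) needed to invoke that theorem with $C=\F_a(X_2)$.
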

For the following proposition, we refer to the reader to \cite{Ext} for a survey on the extensions of semi-Lipschitz functions on quasi-metric spaces.
\begin{proposition}[The free space of a quasi-metric subspace] \label{prop-mc}
Let $(X,d)$ be a quasi-metric space with base point $x_0$, and consider $(M,d)$ a subspace of $(X,d)$ such that $x_0\in M$. Then $\F_a(M)$ is
isometrically isomorphic to a subspace of $\F_a(X)$.
\end{proposition}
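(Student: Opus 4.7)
The plan is to use the universal linearization property (Corollary~\ref{linecor}) applied to the inclusion map, and to show that the induced linear map between free spaces is an isometry. Concretely, I would consider $i\colon (M,d)\hookrightarrow (X,d)$, which lies in $\SLip_0(M,X)$ with semi-Lipschitz constant $1$. By Corollary~\ref{linecor}, $i$ lifts to a linear bounded map $\hat{T}_i\colon \F_a(M)\to \F_a(X)$ with $\|\hat{T}_i|\leq 1$ and $\hat{T}_i(\delta_M(x))=\delta_X(x)$ for every $x\in M$. It then remains to verify that $\hat{T}_i$ is an isometric embedding.

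Since $\hat{T}_i$ is bounded linear and $\mathrm{span}(\delta_M(M))$ is dense in $\F_a(M)$ for the symmetrized topology, it suffices to check the isometric property on elements of the form $\mu=\sum_{k=1}^n \lambda_k\,\delta_M(x_k)$ with $x_k\in M$, using Theorem~\ref{predual} to represent each asymmetric norm as a supremum:
\begin{align*}
\|\mu|^{*}_{\F_a(M)}&=\sup\Bigl\{\sum_k \lambda_k f(x_k)\,:\,f\in\SLip_0(M),\ \|f|_S\leq 1\Bigr\},\\
\|\hat{T}_i(\mu)|^{*}_{\F_a(X)}&=\sup\Bigl\{\sum_k \lambda_k g(x_k)\,:\,g\in\SLip_0(X),\ \|g|_S\leq 1\Bigr\}.
\end{align*}
The inequality $\|\hat{T}_i(\mu)|^{*}_{\F_a(X)}\leq \|\mu|^{*}_{\F_a(M)}$ is free: any competitor $g\in\SLip_0(X)$ with $\|g|_S\leq 1$ restricts to a competitor $g|_M\in \SLip_0(M)$ with $\|g|_M|_S\leq 1$ (as the supremum defining the semi-Lipschitz constant is taken over a smaller set).

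The reverse inequality is where the real content lies, and it reduces to a McShane-type extension result for semi-Lipschitz functions (cited as \cite{Ext} in the statement): given $f\in \SLip_0(M)$ with $\|f|_S\leq 1$, the formula
\[
\tilde{f}(x):=\inf_{y\in M}\bigl[f(y)+d(y,x)\bigr],\qquad x\in X,
\]
defines an extension of $f$ to all of $X$ (the infimum is achieved as $f(x)$ when $x\in M$, thanks to the semi-Lipschitz inequality $f(x)-f(y)\leq d(y,x)$ on $M$). A standard triangle-inequality computation yields $\tilde{f}(x)-\tilde{f}(y)\leq d(y,x)$ for all $x,y\in X$, so $\tilde{f}\in \SLip_0(X)$ with $\|\tilde{f}|_S\leq 1$, and clearly $\tilde{f}(x_0)=f(x_0)=0$ since $x_0\in M$. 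Applied to $\mu$, this gives $\sum_k\lambda_k f(x_k)=\sum_k\lambda_k \tilde{f}(x_k)\leq \|\hat{T}_i(\mu)|^{*}_{\F_a(X)}$.

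The only nontrivial step in the plan is thus the norm-preserving extension of semi-Lipschitz functions from $M$ to $X$; once that is in hand, the two inequalities combine to give $\|\hat{T}_i(\mu)|^{*}_{\F_a(X)}=\|\mu|^{*}_{\F_a(M)}$ on $\mathrm{span}(\delta_M(M))$, and density together with bicompleteness of $\F_a(M)$ and $\F_a(X)$ yields that $\hat{T}_i$ is an isometric isomorphism onto its image, so $\F_a(M)$ identifies with a closed subspace of $\F_a(X)$.
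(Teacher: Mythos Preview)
Your proposal is correct and follows essentially the same route as the paper: both apply Corollary~\ref{linecor} to the inclusion $i\colon M\hookrightarrow X$ to obtain $\hat{T}_i$ with $\|\hat{T}_i|\leq 1$, then establish the reverse inequality on $\mathrm{span}(\delta(M))$ via the McShane-type semi-Lipschitz extension $\tilde{f}(x)=\inf_{m\in M}\{f(m)+\|f|_S\,d(m,x)\}$, and conclude by density. The only cosmetic difference is that you phrase the easy inequality via restriction of competitors while the paper phrases it via $\|\hat{T}_i|=1$, but these are the same observation.
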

\begin{proof}
Let $\hat{T}_i:\F_a(M)\to \F_a(X)$ be the linearization given by Corollary \ref{linecor} of the identity mapping $i:M\to X$. Since $\|\hat{T}_i|=\|i|_S=1$, we know that $\|\hat{T}_i(Q)|_{\F_a(X)}^\ast\leq \|Q|_{\F_a(M)}^\ast$. For the opposite inequality, consider $Q\in \mathrm{span}\left(\delta(M)\right)$. Clearly, $\hat{T}_i(Q)=Q\in \mathrm{span}\left(\delta(X)\right)$. Then, for any $f\in \SLip(M)$, the expression  $\tilde{f}(x)=\inf_{m\in M}\{f(m)+\|f|_Sd(m,x)\}$, $x\in X$ (which is an adaptation of the McShane extension of Lipschitz maps), provides a semi-Lipschitz extension with the same associated conic-norm $\|f|_S$. It follows that
$$\|Q|_{\F_a(M)}^\ast=\sup_{\substack{\|f|_S\leq 1\\ f\in \SLip(M)} }\langle Q,f\rangle\leq\sup_{\substack{\|f|_S\leq 1\\ f\in \SLip(X)}} \langle Q,f\rangle =\|Q|_{\F_a(X)}^\ast=\|\hat{T}_i(Q)|_{\F_a(X)}^\ast.$$
By continuity of $\hat{T}_i$ (and density of $\mathrm{span}\left(\delta(M)\right)$ in $\F_a(M)$), we can extend the previous inequality to any $Q\in \F_a(M)$, which concludes the proof.
\end{proof}

Let us consider another conic-norm on $\mathrm{span}\left(\delta(X)\right)$ (and on $\F_a(X)$) which is based on a variant of the so-called Kantorovich-Rubinstein norm (see \cite[Section 8.4.5]{cobzas2019lips}).

\begin{example}[Kantorovich-Rubinstein conic-norm]
Let $X\neq\emptyset$ be a set equipped with a quasi-metric $d$ and a base point $x_0$. For $\gamma,\overline{\gamma}\in \mathrm{span}\left(\delta(X)\right)$ consider all representations of the form \linebreak
 $\gamma-\overline{\gamma}=\sum_{i=1}^n\lambda_i(\widehat{y}_i-\widehat{z}_i)$, where $\lambda_i\geq 0$ and possibly some $\widehat{y}_i$ or $\widehat{z}_i$ are equal to $\widehat{x}_0=0$, and set
$$d_{KR}(\gamma,\overline{\gamma}):=\inf\{\lambda_1d(z_1,y_1)+\ldots+\lambda_nd(z_n,y_n)\}.$$
Then $\|\gamma|_{KR}:=d_{KR}(\widehat{x}_0,\gamma)$ is an asymmetric norm on $\mathrm{span}\left(\delta(X)\right)$ and
$$d_{KR}(\widehat{x},\widehat{y})=d(y,x),\ \mbox{for all }x,y\in X.$$
Moreover, $\|\gamma|_{KR}$ coincides with the restriction of the conic norm $\|\cdot|^*$ of $\mathrm{SLip}_0(X)^*$ to $\mathrm{span}\left(\delta(X)\right)$ and thus extends to  $\F_a(X)$.
Indeed, if $\|\cdot|'$ is a conic-norm on $\mathrm{span}\left(\delta(X)\right)$ satisfying $\|\delta(x)-\delta(y)|'\leq d(y,x)$, for all $x,y\in X$, then every $\gamma=\lambda_1(\widehat{y_1}-\widehat{z_1})+\ldots+\lambda_n(\widehat{y_n}-\widehat{z_n})$ accomplishes
\begin{align*}
\|\gamma|'=\|\lambda_1(\widehat{y_1}-\widehat{z_1})+\ldots+\lambda_n(\widehat{y_n}-\widehat{z_n})|'&\leq
\|\lambda_1(\widehat{y_1}-\widehat{z_1})|'+\ldots+\|\lambda_n(\widehat{y_n}-\widehat{z_n})|'\\
 &\leq \lambda_1 d(z_1,y_1)+\ldots+\lambda_n d(z_n,y_n),
 \end{align*}
which shows that $\|\gamma|'\leq \|\gamma|_{KR}$. Particularly, we deduce from this that $\|\gamma|^{*}\leq \|\gamma|_{KR}$ (since the conic-norm $\|\cdot|^{*}$ on $\F_a(X)$ satisfies $\|\delta(x)-\delta(y)|^{*}= d(y,x)$, for all $x,y\in X$). Hence
$d(y,x)=\|\delta(x)-\delta(y)|^{*}\leq \|\delta(x)-\delta(y)|_{KR}\leq d(y,x),$ for all $x,y\in X$, which implies that
$$\|\delta(x)-\delta(y)|_{KR}= d(y,x),\ \ \mbox{for all } x,y\in X.$$
Consider now the mapping $L:X\to \left(\mathrm{span}\left(\delta(X)\right),\|\cdot|_{KR}\right)$ sending $x$ to $\delta(x)$, which is clearly an isometric embedding.
By the universality property of $\F_a(X)$ (see Theorem \ref{Lslf}), we know that $L$ extends to $\tilde{L}:\F_a(X)\to\left(\mathrm{span}\left(\delta(X)\right),\|\cdot|_{KR}\right)$ and $\|\cdot|_{KR}\leq \|\cdot|^{*}$, so the conic-norms $\|\cdot|_{KR}$ and $\|\cdot|^{*}$ are the same.
\end{example}

\section{Examples of semi-Lipschitz free spaces}\label{examples}
\smallskip
\noindent Let us now illustrate the semi-Lipschitz free space for three concrete examples of quasi-metric spaces: a finite quasi-metric space consisting of three points, the set of natural numbers $\mathbb{N}$ with a discrete quasi-metric and the set of real numbers
$\mathbb{R}$ under the quasi-hemi-metric defined by the canonical conic hemi-norm $u$. We also include an example-scheme stemming from canonical asymmetrizations of subsets of $\R$-trees.

\subsection{A 3-point quasi-metric space} Let $X=\{x_0,x_1,x_2\}$ be a set of three points, endowed with the following quasi-metric (in a general form):
    \begin{align*}
         &\rho(x_0,x_1)=a_{01} &\rho(x_1,x_0)=a_{10}& &\rho(x_0,x_2)=a_{02}\\
         &\rho(x_2,x_0)=a_{20} &\rho(x_1,x_2)=a_{12}& &\rho(x_2,x_1)=a_{21}
    \end{align*}
Taking $x_0$ as base point, it is clear that the set of semi-Lipschitz functions vanishing at $x_0$ can be algebraically identified with $\mathbb{R}^2$, i.e. any function $g:X\to\mathbb{R}$ with $g(x_0)=0$ is in $\SLip_0(X)$, with associated semi-Lipschitz norm equal to
$$\|g|_S=\max\Big\{\,\frac{g_1-g_2}{a_{21}},\,\frac{g_2-g_1}{a_{12}},\,\,\frac{g_1}{a_{01}},\,\,\frac{g_2}{a_{02}},\,\,\frac{-g_1}{a_{10}},\,\,\frac{-g_2}{a_{20}}\,\Big\},$$
where $g_1=g(x_1)$ and $g_2=g(x_2)$.
Therefore, the unit ball $B$ of $\mathrm{SLip}_0(X,\rho)\simeq\mathbb{R}^2$ is the polygon generated by the linear inequalities defined in terms of the asymmetric norm. The dual cone of $(\SLip_0(X),\|\cdot|_S)$ is the vector space $\mathbb{R}^2$ endowed with the asymmetric norm determined by the Minkowski gauge of the asymmetric polar $B^o$ of the unit ball $B$ of $\mathrm{SLip}_0(X,\rho)$, that is
$$B^o=\{X\in \mathbb{R}^2:\: \langle g,X\rangle \leq 1,\,\,\forall g\in B\}.$$
\smallskip
Since the evaluation functionals $\delta(x_1), \delta(x_2)$ generate the vector space $\mathbb{R}^2$, it follows that $\mathcal{F}_a(X,\rho)$ is isomorphic to $\mathbb{R}^2$, with the asymmetric norm determined by the aforementioned Minkowski gauge.
Furthermore, for any $g\in\SLip_0(X)$, its linearization  $T_g\colon\F_a(X)\to \mathbb{R}$ is given by
$$
T_g\left(\lambda_1\hat{x_1}+\lambda_2\hat{x_2}\right)=\lambda_1g(x_1)+\lambda_2g(x_2),
$$
with $\lambda_i\in\mathbb{R}$, $i=1,2$. Notice that the unit balls of $\mathrm{SLip}_0(X,\rho)$ and its dual cone have at most $6$ extreme points (see Figure \ref{fig2}).

\begin{figure}[h]
\centering
\includegraphics[width=6.3cm]{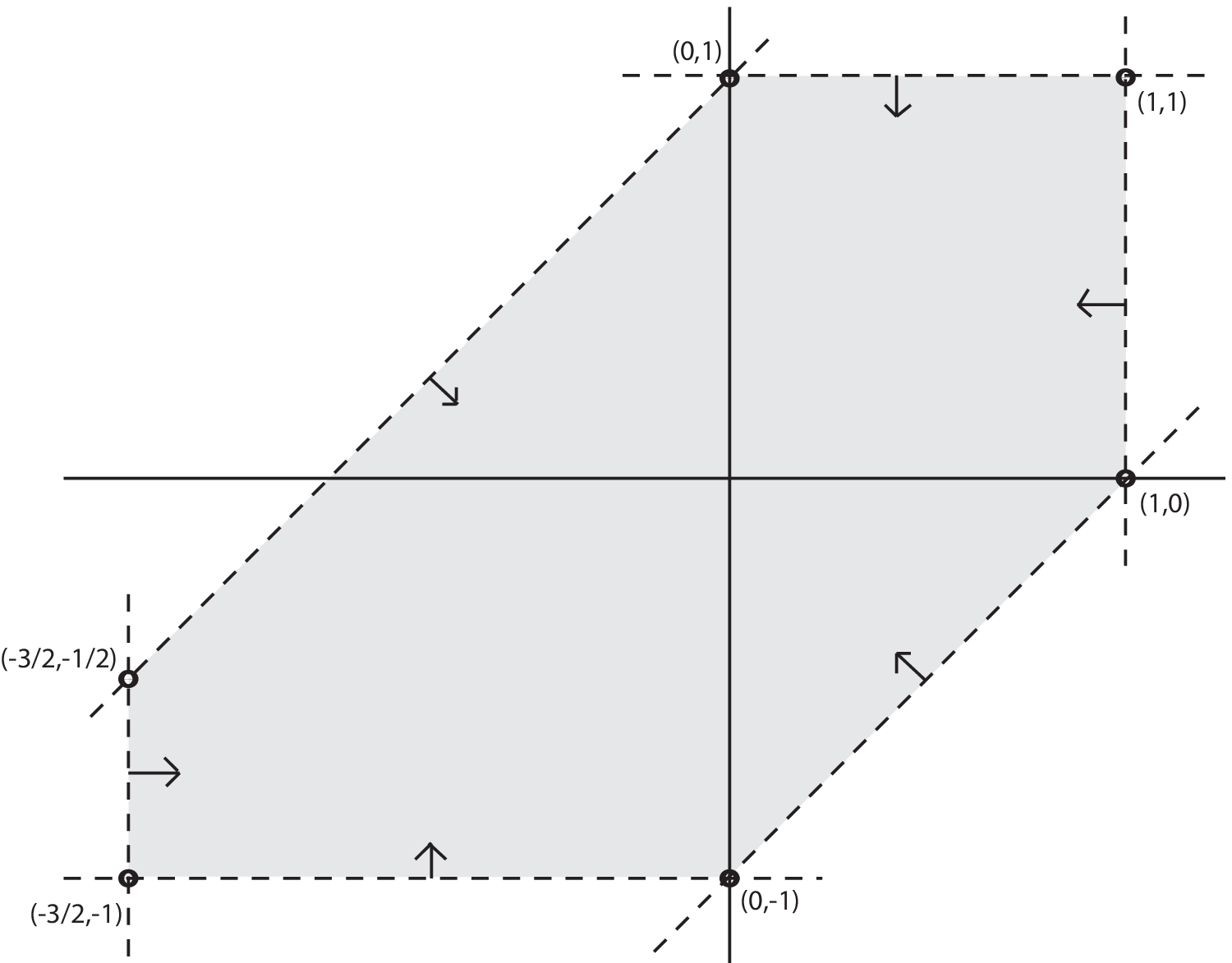}\hspace{1cm}
\includegraphics[width=5.13cm]{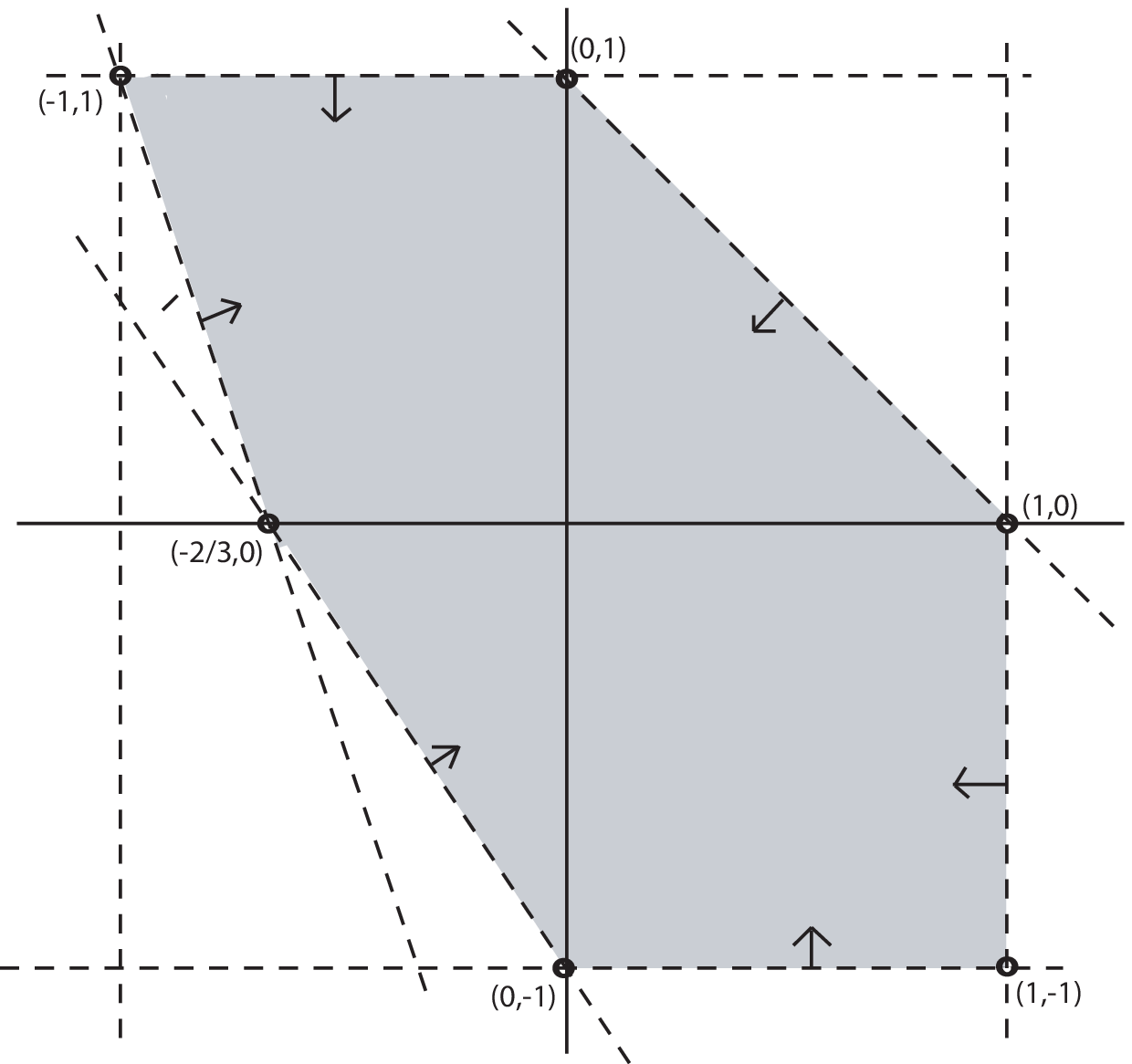}
\caption{Representation of the unit ball of $\mathrm{SLip}_0(X,\rho)$ and its asymmetric polar, respectively, with $X=\{x_0,x_1,x_2\}$, $\rho(x_1,x_0)=\frac{ 3}{ 2}$ and $\rho(x_i,x_j)=1$ for $i\neq j$ with $(i,j)\neq (1,0)$}\label{fig2}
\end{figure}


\subsection{\texorpdfstring{$\mathbb{N}$}{N} as a metric or quasi-metric space}
We now consider the set of natural numbers $\mathbb{N}$ (including~$0$) endowed with the quasi-metric defined by

\begin{equation*}
d(n,m)=\begin{cases} 1, &\hbox{if}\ m\notin\{0,n\} \\
0,\ &\hbox{if } m\in\{0,n\}
\end{cases}.
\end{equation*}
\noindent We fix as a base point $x_0=0$. Let $y=\left(y(n)\right)_n\in \mathrm{SLip}_0(\mathbb{N},d)$. Then $y(0)=0$ and the semi-Lipschitz condition implies that the sequence $\left(y(n)\right)_n$ is non-negative: indeed
$$y(0)-y(n)=-y(n)\leq \|y|_S\,d(n,0)=0$$
and
$$y(n)-y(0)=y(n)\leq \|y|_S \,d(0,n)=\|y|_S.$$ Therefore we have $\left(y(n)\right)_n\in \ell^\infty(\mathbb{N})$ and $\|y|_S\geq \|y\|_\infty$. Moreover,
$$\|y|_S=\sup_{d(n,m)>0}\frac{y(m)-y(n)}{1}\leq \sup_{d(n,m)>0}y(m)=\|y\|_\infty,$$
\noindent since $y(n)\geq 0$ for all $n\in \mathbb{N}$. It is easy to check than any bounded non-negative sequence satisfies the semi-Lipschitz condition, so it follows that $\mathrm{SLip}_0(\mathbb{N},d)$ is  $(\ell^\infty_+(\mathbb{N}),\|\cdot\|_\infty)$, the positive cone of $\ell^\infty(\mathbb{N})$. The dual norm on $\ell^\infty_+(\mathbb{N})^*$ is given by
$$\|\varphi|^*=\sup_{\substack{ (y_n)\in \ell^\infty_+(\mathbb{N})\\ \|(y_n)\|_\infty\leq1\\}}\varphi((y_n)) .$$
The set of evaluation functionals $\{\delta(n):\:n\in \mathbb{N}\}\subset \ell^\infty_+(\mathbb{N})^*$ can be identified with the canonical basis of $\ell^1(\mathbb{N})$, so the linear span of $\delta(\mathbb{N})$ is the set of finitely supported sequences $c_{00}(\mathbb{N})$. On this set, the dual norm of $\mathrm{SLip}_0(\mathbb{N},d)^*$ becomes
$$\|(x_n)|^*=\sum_{n\in \mathbb{N}} \max\{x_n, 0\}\,=\sum_{n\in \mathbb{N}} x_n^+\,:=\,\|(x_n)|_{1,+},$$
\noindent since the supremum on the dual norm is taken over the positive cone of $\ell^\infty(\mathbb{N})$ (and it is attained at the sequence $\left(\mathrm{sgn}(x_n)\vee 0\right)$). It is easy to check that the symmetrization of the asymmetric norm $\|\cdot|_{1,+}$ is equivalent to the usual norm of $\ell^1(\mathbb{N})$, and therefore the asymmetric normed space $(\ell^1(\mathbb{N}),{\|\cdot|_{1,+}})$ satisfies the conditions to be the bicompletion of $(c_{00}(\mathbb{N}),{\|\cdot|_{1,+}})$. Therefore, the semi-Lipschitz free space $\mathcal{F}_a(\mathbb{N},d)$ is isometrically isomorphic to $(\ell^1(\mathbb{N}),\|\cdot|_{1,+})$ and the linearization $T_y$ of a function $y=\left(y(n)\right)_n\in \mathrm{SLip}_0(\mathbb{N},d)$ can be obtained from
$$
T_y\left(e_n\right)=y(n),\ n=1,2,\ldots
$$
where $e_n$ is the $n$-th element of the canonical basis of $\ell^1(\mathbb{N})$. 

\medskip

\noindent It is well known that the free space $\mathcal{F}(\mathbb{N},D)$ of $\mathbb{N}$ equipped with the distance
$$D(m,n)=\begin{cases}
2, &\hbox{if}\ \ n\notin\{0,m\}\\
1, &\hbox{if}\ \ n=0\mbox{ or }m=0\\
0, &\hbox{if}\ \ n=m
\end{cases}$$
is isometric to $\ell^1(\mathbb{N})$ (see, for instance, \cite{Godard,G_2015,Weaver}), and $$L=\Lip_0(\mathbb{N},D)=\{y=y_n\in\mathbb{R}^{\mathbb{N}}:\|y\|_L:=\frac{y(n)-y(m)}{D(m,n)}<\infty\}$$ is isometric to $\ell^{\infty}(\mathbb{N})$. Given $m,n\in\mathbb{N}$, then the canonical asymmetrization of $D$ (Definition \ref{canoasym}) is
$$D_+(m,n)=\|\widehat{\delta}(n)-\widehat{\delta}(m)|_{\mathcal{F}_+}=\sup_{\substack{y\in l^{\infty}(\N)_+\\\|y\|_{\infty}\leq 1}}\langle y,e_n-e_m\rangle=\sup_{0\leq y_n\leq 1}\sum_{k\geq 0}y_kx_k,$$
where $x_n\!=\!1$, $x_m=\!-\!1$, and $x_k=0$, for $k\notin\{n,m\}$. According to Theorem~\ref{Nino}, notice that \linebreak $\mathcal{F}(\mathbb{N},D)=\mathcal{F}(\mathbb{N},d)$ (as a set), with $d=D_+$, $\SLip_0(\mathbb{N},d)=\ell_{+}^{\infty}(\mathbb{N})=\SLip_0(\mathbb{N},D_+)$ and $\|x|_{\mathcal{F}_a}=\|x|_{\mathcal{F}_+}=\sum_{n\geq 0}x_n^+$.

\medskip

\subsection{The quasi-metric space \texorpdfstring{($\mathbb{R},u)$}{(R,u)}}\label{ss-5.3}
Note that since the symmetrized distance $d_u^{s}$ is equal to the usual metric of $\R$ (which can be seen as a pointed $\R$-tree),
$\F(\R,u)$ can be obtained from Proposition~\ref{SS*}. We hereby include a direct self-contained proof, which does not rely on Godard's work on $\R$-trees. Let us start with some preliminary results.

\begin{lemma}[Semi-Lipschitz functions in $( \R,u)$]
Let $f\in \mathrm{SLip}_0(\mathbb{R},u)$. Then $f$ is a non-decreasing function in $\Lip_0(\mathbb{R})$.
\end{lemma}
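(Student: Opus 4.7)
The plan is a direct unpacking of the semi-Lipschitz inequality for the specific quasi-metric $d_u(x,y)=\max\{y-x,0\}$. By hypothesis, there exists $L\geq 0$ such that for all $x,y\in\mathbb{R}$
\[
f(x)-f(y)\,\leq\, L\, d_u(y,x)\,=\,L\,\max\{x-y,0\}.
\]
The argument has two short steps, and a trivial third: monotonicity follows by choosing $x<y$, then the Lipschitz estimate on the usual metric follows from monotonicity combined with the same inequality applied with the roles of $x,y$ reversed, and finally $f(0)=0$ is built into the definition of $\SLip_0$.

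First, I would observe that whenever $x<y$, the right-hand side vanishes since $\max\{x-y,0\}=0$. Thus $f(x)-f(y)\leq 0$, i.e.\ $f(x)\leq f(y)$, which establishes that $f$ is non-decreasing. Next, for the Lipschitz estimate, take arbitrary $x,y\in\mathbb{R}$ and, without loss of generality, assume $x\leq y$. Monotonicity yields $|f(y)-f(x)|=f(y)-f(x)$, while the semi-Lipschitz inequality applied to the ordered pair $(y,x)$ gives
\[
f(y)-f(x)\,\leq\,L\,\max\{y-x,0\}\,=\,L\,(y-x)\,=\,L\,|y-x|.
\]
This proves $f$ is Lipschitz with constant at most $L$. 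Since $f(x_0)=f(0)=0$ by definition of $\SLip_0$, we conclude that $f\in\Lip_0(\mathbb{R})$.

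There is essentially no obstacle; the only subtlety worth flagging is the asymmetric roles of the two arguments in $d_u(y,x)=\max\{x-y,0\}$, which one must keep straight to see that the inequality is vacuous in one direction (giving monotonicity) and meaningful in the other (giving the Lipschitz bound). The argument, incidentally, also shows that the semi-Lipschitz constant $\|f|_S$ coincides with the (classical) Lipschitz constant $\|f\|_L$ of $f$ on $(\mathbb{R},|\cdot|)$, which will be useful in the sequel when identifying $\SLip_0(\mathbb{R},u)$ with the cone of non-decreasing elements of $\Lip_0(\mathbb{R})$.
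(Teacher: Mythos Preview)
Your proof is correct and follows essentially the same approach as the paper: the monotonicity argument (using $d_u(y,x)=0$ when $x\leq y$) is identical, and for the Lipschitz bound the paper simply invokes the general fact that any semi-Lipschitz function is Lipschitz for the symmetrized metric (Remark~\ref{lip}(ii)), whereas you derive it directly from the inequality combined with monotonicity. Your additional observation that $\|f|_S=\|f\|_L$ is indeed what is used implicitly in the next lemma.
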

\begin{proof}
By Remark \ref{lip}, $f$ is Lipschitz on $(\mathbb{R},u^{s})=(\mathbb{R},|\cdot|)$, and therefore is differentiable almost everywhere. Note that if $x\leq y$, then $d_u(y,x)=0$, so $f(x)\leq f(y)$. As $f$ is non-decreasing, $f'\geq 0$.
\end{proof}

We are now ready to establish the following result.
\begin{lemma}\label{slip0}
The normed cone $(\mathrm{SLip}_0(\mathbb{R},u),\|\cdot|_S)$ is isometrically isomorphic to $(\mathcal{L}^\infty_+(\mathbb{R}), \|\cdot\|_\infty )$.
\end{lemma}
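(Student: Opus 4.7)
The strategy is to define the map $T:\mathrm{SLip}_0(\mathbb{R},u)\to \mathcal{L}^\infty_+(\mathbb{R})$ by $T(f)=f'$ (which is well-defined almost everywhere, since by Remark~\ref{lip} and the previous lemma, any $f\in\mathrm{SLip}_0(\mathbb{R},u)$ is Lipschitz on $(\mathbb{R},|\cdot|)$ and hence differentiable a.e. by Rademacher's theorem, with $f'\geq 0$ a.e.\ due to monotonicity and $f'\in\mathcal{L}^\infty(\mathbb{R})$ due to the Lipschitz property). Cone-linearity ({\em c.f.}~Definition~\ref{cone-linear}) is then immediate from the linearity of differentiation on the cone of non-decreasing Lipschitz functions vanishing at $0$.

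For surjectivity, given $g\in\mathcal{L}^\infty_+(\mathbb{R})$, I would set
$$f(x)=\int_0^x g(t)\,dt,\quad x\in\mathbb{R},$$
so that $f(0)=0$, $f'=g$ a.e., and $f$ is non-decreasing. To verify $f\in\mathrm{SLip}_0(\mathbb{R},u)$, note that for $x\leq y$ we have $f(x)\leq f(y)$, so $f(x)-f(y)\leq 0=\|g\|_\infty\cdot d_u(y,x)$, while for $x>y$, $f(x)-f(y)=\int_y^x g\,dt\leq \|g\|_\infty (x-y)=\|g\|_\infty\, d_u(y,x)$. Injectivity follows from the fundamental theorem of calculus for absolutely continuous functions (if $T(f_1)=T(f_2)$ a.e.\ and $f_1(0)=f_2(0)=0$, then $f_1=f_2$ pointwise).

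It remains to prove the isometry $\|f|_S=\|f'\|_\infty$. By Proposition~\ref{SLcri}(ii) and the monotonicity of $f$,
$$\|f|_S=\sup_{d_u(y,x)>0}\frac{f(x)-f(y)}{d_u(y,x)}=\sup_{x>y}\frac{f(x)-f(y)}{x-y}.$$
For $x>y$, $f(x)-f(y)=\int_y^x f'\,dt\leq \|f'\|_\infty(x-y)$, yielding $\|f|_S\leq \|f'\|_\infty$. Conversely, by the Lebesgue differentiation theorem, for a.e.~$t\in\mathbb{R}$,
$$f'(t)=\lim_{h\to 0^+}\frac{f(t+h)-f(t)}{h}\leq \|f|_S,$$
so $\|f'\|_\infty\leq \|f|_S$, establishing the equality.

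The main obstacle is really just the careful bookkeeping regarding the quasi-hemi-metric nature of $d_u$ (which makes the sup formula in Proposition~\ref{SLcri}(ii) the appropriate one), and in showing that both directions of the isometry use only the non-negativity of $f'$, the Lipschitz character, and the Lebesgue differentiation theorem; no deeper analytic machinery is required.
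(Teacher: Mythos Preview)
Your proof is correct and follows essentially the same approach as the paper: the paper defines the isomorphism as integration $T:\mathcal{L}^\infty_+(\mathbb{R})\to\mathrm{SLip}_0(\mathbb{R},u)$, $Tg(x)=\int_0^x g$, while you define its inverse $f\mapsto f'$, but the verifications (integral estimate for one inequality, differentiation for the other) are identical in substance.
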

\begin{proof}
Consider the mapping $T:(\mathcal{L}^\infty_+(\mathbb{R}), \|\cdot\|_\infty) \to(\mathrm{SLip}_0(\mathbb{R},u),\|\cdot|_S)$ defined by $$Tg(x)=\int_0^x gd\lambda=\int \mathbf{1}_{[0,x]}g,$$
\noindent which is surjective by the previous analysis.
This mapping is well defined since  for $x\geq y$ we have $Tg(x)-Tg(y)=\int_y^x gd\lambda \leq \|g\|_\infty(x-y)=\|g\|_\infty d_u(y,x)$. If $x<y$ then $$Tg(x)-Tg(y)=-\int_x^y gd\lambda\leq 0=d_u(y,x).$$ This also proves that $\|Tg|_S\leq \|g\|_\infty$. On the other hand, consider $x\in \mathbb{R}$ a point of differentiability of $Tg$. Then $$Tg'(x)=\lim_{y\searrow x}\frac{Tg(y)-Tg(x)}{y-x}\leq \sup_{x< y}\frac{Tg(y)-Tg(x)}{y-x}=\|Tg|_S,$$
and since clearly $(Tg)'=g$, we conclude that $\|g\|_\infty\leq\|Tg|_S $ and that $T$ is an isometric isomorphism.
\end{proof}
\medskip

For the following result, if $f\in \mathcal{L}^1(\mathbb{R})$ recall the notation $\|f|_{1,+}=\int_{\R} f^+d\lambda$, where $f^+(x)=\max\{f(x), 0\}$ and $\lambda$ denotes the Lebesgue measure, which was used in Lemma \ref{duall1}.

\begin{theorem}\label{SlipR}
The semi-Lipschitz free space $\mathcal{F}_a\left((\mathbb{R},u)\right)$ of the asymmetric hemi-normed space $(\mathbb{R},u)$ is isometrically isomorphic to $(\mathcal{L}^1(\mathbb{R}),\|\cdot|_{1,+})$.
\end{theorem}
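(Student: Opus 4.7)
The plan is to exhibit an explicit isometric isomorphism $T:\mathcal{F}_a(\mathbb{R},u)\to (\mathcal{L}^1(\mathbb{R}),\|\cdot|_{1,+})$ by first embedding $(\mathbb{R},u)$ into $\mathcal{L}^1(\mathbb{R})$ as a ``signed primitive'' map, and then linearizing via the universal property (Theorem~\ref{Lslf}). Concretely, I will define $\iota:\mathbb{R}\to \mathcal{L}^1(\mathbb{R})$ by $\iota(x):=\mathbf{1}_{[0,x]}$ if $x\geq 0$ and $\iota(x):=-\mathbf{1}_{[x,0]}$ if $x<0$, so that $\iota(y)-\iota(x)=\mathbf{1}_{[x,y]}$ whenever $x\leq y$ and $\iota(y)-\iota(x)=-\mathbf{1}_{[y,x]}$ otherwise. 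A direct computation of the positive part then yields $\|\iota(y)-\iota(x)|_{1,+}=u(y-x)=d_u(x,y)$ for all $x,y\in\mathbb{R}$, showing $\iota\in\SLip_0((\mathbb{R},u),(\mathcal{L}^1,\|\cdot|_{1,+}))$ with $\|\iota|_S=1$ and $\iota(0)=0$. Theorem~\ref{Lslf} then furnishes a unique linear extension $T:\mathcal{F}_a(\mathbb{R},u)\to(\mathcal{L}^1,\|\cdot|_{1,+})$ with $T\circ\delta=\iota$ and $\|T|=1$.

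The crux of the argument is to upgrade the operator bound $\|T|=1$ to pointwise isometry on the dense subspace $\mathrm{span}(\delta(\mathbb{R}))$. Given $Q=\sum_i\lambda_i\delta_{x_i}$, I will use the parametrization provided by Lemma~\ref{slip0}: every $f\in \SLip_0(\mathbb{R},u)$ with $\|f|_S\leq 1$ arises as $f(x)=\int_0^x g\,d\lambda$ for a unique $g=f'\in\mathcal{L}^\infty_+(\mathbb{R})$ with $\|g\|_\infty\leq 1$, and this expression equals $\int g\,\iota(x)\,d\lambda$ for both signs of $x$ by the definition of $\iota$. Substituting and swapping sum with integral gives
$$\|Q|_{\mathcal{F}_a}^{*}=\sup_{\|f|_S\leq 1}\sum_i\lambda_i f(x_i)=\sup_{\substack{g\in\mathcal{L}^\infty_+\\ \|g\|_\infty\leq 1}}\int g\,T(Q)\,d\lambda =\int T(Q)^+\,d\lambda=\|T(Q)|_{1,+},$$
where the penultimate equality follows by taking $g=\mathbf{1}_{\{T(Q)>0\}}$. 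This establishes that $T$ is an isometry on $\mathrm{span}(\delta(\mathbb{R}))$.

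Surjectivity and extension are then routine. The asymmetric norm $\|\cdot|_{1,+}$ has symmetrization $\|f|_{1,+}+\|\!\!-\!\!f|_{1,+}=\|f^+\|_1+\|f^-\|_1=\|f\|_1$, so $(\mathcal{L}^1(\mathbb{R}),\|\cdot|_{1,+})$ is bicomplete. The image $\mathrm{span}(\iota(\mathbb{R}))$ contains every indicator $\mathbf{1}_{[a,b]}=\iota(b)-\iota(a)$ and hence all step functions, which are $\|\cdot\|_1$-dense in $\mathcal{L}^1(\mathbb{R})$; thus the bicompletion of $(\mathrm{span}(\iota(\mathbb{R})),\|\cdot|_{1,+})$ is the whole of $(\mathcal{L}^1,\|\cdot|_{1,+})$. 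Since $\mathcal{F}_a(\mathbb{R},u)$ is by Definition~\ref{free} the bicompletion of $(\mathrm{span}(\delta(\mathbb{R})),\|\cdot|^*)$ and $T$ is isometric and linear on this dense subspace, it extends uniquely to the desired isometric isomorphism $\mathcal{F}_a(\mathbb{R},u)\cong(\mathcal{L}^1(\mathbb{R}),\|\cdot|_{1,+})$. The only delicate point is the identity computing $\|Q|_{\mathcal{F}_a}^{*}$ via Lemma~\ref{slip0}; everything else is a bookkeeping exercise.
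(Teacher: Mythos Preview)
Your proof is correct, but it takes a different route from the paper's. The paper works on the dual side: it invokes Lemma~\ref{duall1} to identify $(\mathcal{L}^1(\mathbb{R}),\|\cdot|_{1,+})^*$ with $(\mathcal{L}^\infty_+(\mathbb{R}),\|\cdot\|_\infty)$, then checks that the isometry $T:\mathcal{L}^\infty_+\to\SLip_0(\mathbb{R},u)$ of Lemma~\ref{slip0} is $(w^*\text{-}w^*)$-continuous, and finally applies the adjoint machinery of Lemma~\ref{adjunto} to produce the predual isometry. You instead construct the predual map directly: you embed $(\mathbb{R},u)$ into $(\mathcal{L}^1,\|\cdot|_{1,+})$ via $\iota(x)=\pm\mathbf{1}_{[0,x]}$, linearize through the universal property (Theorem~\ref{Lslf}), and verify isometry on $\mathrm{span}(\delta(\mathbb{R}))$ by the same pairing computation, using Lemma~\ref{slip0} only to parametrize the unit ball of $\SLip_0$. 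The underlying identity $\langle\delta_x,\int_0^{\cdot}g\rangle=\int g\,\iota(x)\,d\lambda$ is the common core of both arguments; the difference is that you avoid the $w^*$-topology and the adjoint lemma entirely, trading them for a short density-of-step-functions argument to get surjectivity. Your approach is more self-contained and slightly more elementary; the paper's approach better illustrates the duality framework developed in Section~\ref{prelim}.
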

\begin{proof}
By Lemma \ref{duall1}, we know that $(\mathcal{L}^1(\mathbb{R}),\|\cdot|_{1,+})$ is the asymmetric predual of $(\mathcal{L}^\infty_+(\mathbb{R}),\|\cdot\|)$. Therefore we only need to check that the isometry $T:(\mathcal{L}^\infty_+(\mathbb{R}), \|\cdot\|_\infty) \to(\mathrm{SLip}_0(\mathbb{R},u),\|\cdot|_S)$ defined in the previous proof is $(w^*$-$w^*)$-continuous, in which case Lemma \ref{adjunto} will give us an isometry between the preduals $\mathcal{F}_a(\mathbb{R},u)$ and $(\mathcal{L}^1(\mathbb{R}),\|\cdot|_{1,+})$. So, let $(g_\alpha)$ be a net on $\mathcal{L}^\infty_+(\mathbb{R})$ converging to $g$ in the $w^*$ topology induced by the predual $(\mathcal{L}^1(\mathbb{R}),\|\cdot|_{1,+})$, and take $x\in \mathbb{R}$ and the corresponding $\widehat{x}\in \mathcal{F}_a(\mathbb{R},u)$. Then
\begin{equation}\label{1}
    \langle Tg_\alpha ,\hat x\rangle =\int_0^x g_\alpha =\langle g_\alpha,\mathbf{1}_{[0,x]}\rangle\longrightarrow \langle  g, \mathbf{1}_{[0,x]}\rangle,
\end{equation}
\noindent by the $w^*$ convergence of $(g_\alpha)$. Now, for an arbitrary $\mu\in \mathcal{F}_a(X)$ we can take a sequence $(\mu_n)\subset \mathrm{span}(\delta(\mathbb{R}))$ such that $\mu_n\to \mu$ in the symmetrized topology of $\mathrm{SLip}_0(\mathbb{R},u)^*$, and therefore
\begin{equation}\label{2}
 \langle Tg_\alpha, \mu\rangle = \lim_n\langle Tg_\alpha, \mu_n\rangle,
\end{equation}
where the last convergence is with respect to the usual norm on $\mathbb{R}$, thanks to the symmetrized-$|\cdot|$ continuity of semi-Lipschitz functions. Equations \eqref{1} and \eqref{2} yield that $\langle Tg_\alpha, \mu \rangle \to \langle Tg,\mu \rangle$ for the norm topology in $\mathbb{R}$, so $T$ is $(w^*$-$w^*)$-continuous, and by Lemma \ref{adjunto} there exists an isometric isomorphism between $(\mathcal{F}_a(\mathbb{R},u), \|\cdot|^*)$ and  $(\mathcal{L}^1(\mathbb{R}),\|\cdot|_{1,+})$.
\end{proof}

As we show in Example~\ref{sorg}, $d_u(x,y)=u(y-x)$ is a canonical asymmetrization of $D(x,y)=|y-x|$ for the cone $P=\{\phi\in L:\phi'\geq 0\}$. Notice that the canonical asymmetrization $D_+$, based on the cone $P=L_+$ gives a different asymmetrization.

\subsection{Canonic asymmetrization of subsets of \texorpdfstring{$\R$}{R}-trees.}
Propositions~\ref{SS*} and~\ref{L1(T)} provide a variety of examples of quasi-metric spaces $(X,d)$ whose corresponding semi-Lipschitz free spaces are isometrically isomorphic to subspaces of $(\mathcal{L}^1(T),\Vert \cdot|_{1,+})$, where $T$ is an $\R$-tree containing the symmetrized space~$(X,d^{s})$. We can obtain more specific examples by applying the following recent result from \cite[Theorem~1.1]{APP}, which gives a characterization of all complete metric spaces whose Lipschitz-free space is isometric to a subspace of $\ell^1(\Gamma)$ for some set~$\Gamma$.

\begin{theorem}\label{APPtrees}
Let $(X,D)$ be a complete pointed metric space. Then the following are equivalent:
\begin{itemize}
\item[(i).] $\F(X)$ is isometrically isomorphic to a subspace of
$\ell^1(\Gamma)$ for some set $\Gamma$;
\item[(ii).] $(X,D)$ is a subset of an $\R$-tree such that $\lambda(X)=0$ and $\lambda(\overline{\mathrm{Br}(X)})=0$, where $\lambda$ is the length measure and $\mathrm{Br}(X)$ is the set of branching points of $X$.
\end{itemize}
\end{theorem}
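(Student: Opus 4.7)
\textbf{Proof proposal for Theorem~\ref{APPtrees}.} The plan is to use Godard's identification of $\F(X)$, for $X$ a subset of an $\R$-tree $T$, with a closed subspace of $\mathcal{L}^1(\mu_X)$, where the measure $\mu_X=\lambda_X+\sum_{a\in X}L(a)\delta_a$ is the one in Definition~\ref{mu}. Both implications will then be reduced to questions about when this $\mathcal{L}^1(\mu_X)$ is, or fails to be, isometric to a subspace of a purely atomic $\ell^1(\Gamma)$.

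For (ii)$\Rightarrow$(i): assuming $X\subset T$ with $\lambda(X)=0$ and $\lambda(\overline{\mathrm{Br}(X)})=0$, the length-measure part $\lambda_X$ of $\mu_X$ vanishes, so $\mu_X=\sum_{a\in X}L(a)\,\delta_a$ is purely atomic. Enumerating the (at most countable) set $\Gamma=\{a\in X:\,L(a)>0\}$ and using the isometric identification $\mathcal{L}^1(\mu_X)\cong \ell^1(\Gamma)$ given by $f\mapsto (L(a)f(a))_{a\in\Gamma}$, Godard's embedding $\F(X)\hookrightarrow\mathcal{L}^1(\mu_X)$ yields an isometric embedding $\F(X)\hookrightarrow\ell^1(\Gamma)$. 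The role of the second hypothesis, $\lambda(\overline{\mathrm{Br}(X)})=0$, is to rule out that passing from $X$ to its closure (required to apply Godard's embedding in the closed setting, \textit{c.f.} Proposition~\ref{dualL1}) reintroduces continuous mass through limits of branching points.

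For (i)$\Rightarrow$(ii): embedding $\F(X)\hookrightarrow\ell^1(\Gamma)\subset\mathcal{L}^1(\#)$ (counting measure) makes $\F(X)$ isometric to a subspace of an $\mathcal{L}^1$-space, so Theorem~\ref{godardtrees} yields an isometric embedding of $(X,D)$ into an $\R$-tree $T$. Suppose toward a contradiction that $\lambda(X)>0$. Then $X$ contains a measurable set $E$ of positive length, and by Lebesgue-density-type arguments one extracts inside $X$ a non-trivial family of disjoint arcs whose total length is positive. Using Godard's pre-adjoint description, the restriction of $\F(X)$ to this family contains an isometric copy of $L^1(E,\lambda_E)$ with $\lambda_E$ non-atomic. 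Since $L^1$ of a non-atomic measure cannot embed isometrically into $\ell^1(\Gamma)$ (e.g.\ because the unit sphere of $L^1$ has no denting points, while every point in the unit sphere of $\ell^1(\Gamma)$ is denting on its face), this yields a contradiction, so $\lambda(X)=0$. The branching-points claim follows along the same lines by observing that $\overline{\mathrm{Br}(X)}$ carries length measure through the length of $T$-segments in its closure; if $\lambda(\overline{\mathrm{Br}(X)})>0$, one again extracts a non-atomic $L^1$ factor that obstructs isometric embedding into $\ell^1(\Gamma)$.

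The technical heart—and the step I expect to be the hardest—is the sharp isometric non-embeddability of $L^1$ (of a non-atomic measure) into $\ell^1(\Gamma)$, and its delicate refinement needed to handle the closure of the branching set $\overline{\mathrm{Br}(X)}$ rather than $X$ itself. The isomorphic analogues are classical, but the isometric versions require a careful analysis of extreme-point/denting-point structure of the unit balls, or equivalently of the Choquet-type representation of norming functionals on $\F(X)$ induced via Godard's embedding. Once this rigidity is established, the equivalence falls out by specializing Godard's embedding to the purely atomic case.
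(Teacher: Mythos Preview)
The paper does not prove this theorem at all: it is simply quoted from \cite[Theorem~1.1]{APP} and used as a black box. So there is no ``paper's own proof'' to compare against.

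That said, your sketch has the right large-scale architecture (Godard's embedding plus an atomic/non-atomic dichotomy for the resulting $\mathcal{L}^1$-space), but there is a genuine gap in how you handle $\mathrm{Br}(X)$. You write that the hypothesis $\lambda(\overline{\mathrm{Br}(X)})=0$ is needed because ``passing from $X$ to its closure \ldots\ reintroduces continuous mass.'' This misidentifies the mechanism. The space $X$ is already complete, hence closed; the issue is not closure of $X$ but rather that Godard's description of $\F(X)$ as an $\mathcal{L}^1$-space does not live over $X$ alone. The correct carrier is essentially $X$ together with the branching points of the minimal subtree spanned by $X$: even when $\lambda(X)=0$, the measure $\mu$ in Godard's identification may acquire a non-atomic part supported on $\overline{\mathrm{Br}(X)}\setminus X$. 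This is precisely why both conditions are needed in (ii), and why the converse direction must separately rule out $\lambda(\overline{\mathrm{Br}(X)})>0$. Your phrase ``carries length measure through the length of $T$-segments in its closure'' gestures at this but does not supply the argument.

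For (i)$\Rightarrow$(ii), the step ``by Lebesgue-density-type arguments one extracts inside $X$ a non-trivial family of disjoint arcs'' is not justified: a subset of an $\R$-tree with $\lambda(X)>0$ need not contain any non-degenerate arc (think of a fat Cantor set inside a segment). The actual obstruction is that Godard's $\mathcal{L}^1(\mu)$ already has a non-atomic part whenever $\lambda(X)>0$ or $\lambda(\overline{\mathrm{Br}(X)})>0$, without extracting arcs; the work in \cite{APP} is to show that this non-atomic $\mathcal{L}^1$ piece genuinely sits isometrically inside $\F(X)$, and then to invoke the isometric rigidity you mention.
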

\noindent Since every metric space as above satisfies property $\mathbf{(S^\ast)}$ ({\em c.f.} Proposition~\ref{L1(T)}), we deduce that the corresponding semi-Lipschitz free space are isometrically isomorphic to $(\ell^1(\Gamma), \Vert\!\cdot\!|_{1,+})$ for some set
$\Gamma$.
\medskip

A careful reader might have observed that in all examples presented in this section, the semi-Lipschitz free space of the given quasi-metric space can be easily obtained from the Lipschitz free space of its symmetrization. We shall now show that this is always the case, provided assumption ($\mathcal{H}$) below holds. (This is the case in all of the aforementioned examples).  \smallskip\newline
Using the same notation as in the second part of Subsection~\ref{sub3.3}, let $(X,d)$ be a quasi-metric space and $(X,D)$ its symmetrization ($D$ is either $d^s$ or $d^{s_0}$). Then $P:=\SLip_0(X,d)$ is a cone in $\Lip_0(X,D)$ and $\|\phi\|_L\leq \|\phi|_{S}$ for all $\phi\in P$. Let us assume:
\begin{itemize}
\item[($\mathcal{H}$)] For every $\phi \in \Lip_0(X,D)$ there exist $\phi _{1},\phi _{2}\in \SLip_0(X,d)$ such that
$$\phi =\phi _{1}-\phi _{2}\qquad \text{ and}\qquad   \max\left\{ \|\phi _{1}|_{S},\|\phi _{2}|_{S}\right\} \,\leq \,\|\phi\|_L.$$
\end{itemize}
Notice that since $\|\phi_i\|_L\leq\|\phi_i\|_S$, for $i\in\{1,2\}$ and in view of the triangular inequality
$$\|\phi\|_L=\|\phi_1-\phi_2\| \leq \,||\phi _{1}||_{L}+||\phi _{2}||_{L}$$ we observe that ($\mathcal{H}$) yields in particular that $P$ induces a canonical asymmetrization in $\F(X,D)$ (in the sense of Remark~\ref{naturalasymmetrizednorm}).
\begin{proposition}
Let $(X,d)$ be a quasi-metric space and assume ($\mathcal{H}$) holds. Then the semi-Lipschitz free space $\F_a(X,d)$ coincides (as a set) with the free space $\F(X,D)$ of the symmetrized space $(X,D)$ and is endowed with the asymmetric norm
$$\|Q|\:=\sup_{\substack{\|\phi|_S\leq 1\\\phi \in \SLip_0(X,d)}}\langle Q,\phi\rangle\,,\quad \text{for all } \,Q\in \F_a(X,d).$$
\end{proposition}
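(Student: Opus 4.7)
The plan is to adapt the reasoning of Proposition~\ref{compatibilyII}, using assumption~($\mathcal{H}$) to show that the Lipschitz-free norm $\|\cdot\|_\F$ of $(X,D)$ and the symmetrization $\|\cdot|_{\F_a}^s$ of the asymmetric norm on $\mathrm{span}(\delta(X))$ are equivalent. Once this is established, the identification $\F_a(X,d)=\F(X,D)$ as sets follows from the uniqueness of completion under equivalent metrics, and the stated formula for the asymmetric norm is then read off from Theorem~\ref{predual}.

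For the first inequality, recall from Remark~\ref{lip}(ii) that every $\phi\in\SLip_0(X,d)$ satisfies $\|\phi\|_L\leq \|\phi|_S$, so the unit ball of $(\SLip_0(X,d),\|\cdot|_S)$, viewed inside $\Lip_0(X,D)$, is contained in the unit ball of $(\Lip_0(X,D),\|\cdot\|_L)$. Hence, for any $Q\in\mathrm{span}(\delta(X))$ and any $\phi\in\SLip_0(X,d)$ with $\|\phi|_S\leq 1$, both $\phi$ and $-\phi$ lie in the $\Lip_0(X,D)$-unit ball (since $\|\cdot\|_L$ is a symmetric norm), which gives
\[
\|Q|_{\F_a}\leq \|Q\|_\F\quad\text{and}\quad \|\!\!-\!\!Q|_{\F_a}\leq\|Q\|_\F,
\]
and therefore $\|Q|_{\F_a}^s\leq 2\|Q\|_\F$.

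For the reverse direction, take $Q\in\mathrm{span}(\delta(X))$ and $\phi\in\Lip_0(X,D)$ with $\|\phi\|_L\leq 1$. Assumption~($\mathcal{H}$) yields $\phi_1,\phi_2\in\SLip_0(X,d)$ with $\phi=\phi_1-\phi_2$ and $\max\{\|\phi_1|_S,\|\phi_2|_S\}\leq 1$. Then
\[
\langle Q,\phi\rangle \,=\, \langle Q,\phi_1\rangle + \langle -Q,\phi_2\rangle \,\leq\, \|Q|_{\F_a}+\|\!\!-\!\!Q|_{\F_a} \,=\, \|Q|_{\F_a}^s,
\]
so taking the supremum over such $\phi$ gives $\|Q\|_\F\leq \|Q|_{\F_a}^s$. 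Thus $\|\cdot\|_\F$ and $\|\cdot|_{\F_a}^s$ are equivalent on $\mathrm{span}(\delta(X))$, which implies that their completions coincide as sets: $\F_a(X,d)$, which by Definition~\ref{free} is the bicompletion of $(\mathrm{span}(\delta(X)),\|\cdot|_{\F_a})$ (that is, the completion under $\|\cdot|_{\F_a}^s$), equals $\F(X,D)$ as a set.

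Finally, the displayed formula for the asymmetric norm of an arbitrary $Q\in\F_a(X,d)$ follows from Theorem~\ref{predual}: the dual cone of $\F_a(X,d)$ is isometrically isomorphic to $(\SLip_0(X,d),\|\cdot|_S)$, and the Hahn--Banach theorem for asymmetric normed spaces (Corollary~2.2.4 of \cite{cobzas2012functional}, already invoked in Lemma~\ref{adjunto}) recovers the asymmetric norm of $Q$ as the supremum of $\langle Q,\phi\rangle$ over the unit ball of this dual cone. The main technical point is to recognize that ($\mathcal{H}$) is the correct asymmetric analogue of the decomposition condition~\eqref{eq:P} from Remark~\ref{naturalasymmetrizednorm}, precisely tailored to control the Lipschitz-free norm by the symmetrized asymmetric norm; beyond this, the argument is a routine completion and duality computation.
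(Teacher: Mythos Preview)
Your proof is correct and follows essentially the same approach as the paper: both establish the equivalence of $\|\cdot\|_\F$ and $\|\cdot|_{\F_a}^s$ on $\mathrm{span}(\delta(X))$ via the inclusion $\|\phi\|_L\leq\|\phi|_S$ for one inequality and the decomposition from~($\mathcal{H}$) for the other, then conclude by identifying completions. The only cosmetic difference is that the paper picks a single $\phi$ attaining $\|Q\|_\F$ (via Hahn--Banach) rather than taking a supremum over all $\phi$, and it treats the displayed formula for $\|Q|$ as essentially definitional rather than re-deriving it from Theorem~\ref{predual}.
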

\begin{proof}
Following the method used in Subsection~\ref{sub3.3}, we start by identifying the sets
\begin{equation*}
F=\mathrm{span}\{\delta (x):x\in X\}\subset \SLip_0(X,d)^{\ast }
\end{equation*}
and
\begin{equation*}
\widehat{F}=\mathrm{span}\{\widehat{\delta}(x):x\in X\}\subset \Lip_0(X,D)^{\ast }
\end{equation*}
where $\delta $ and $\widehat{\delta}$ are the canonical injections of
$(X,d)$ into $\SLip_0(X,d)^{\ast }$ and of $(X,D)$ into $\Lip_0(X,D)^\ast $, respectively. To conclude, it suffices to prove that the $d^s$-symmetrization $\|\cdot\|^s$ of the asymmetric norm $\|\cdot|$ is equivalent to $\|\cdot\|_{\F}$. Consider $Q\in F$. Since $\|\phi|_S\geq \|\phi\|_L$ for any $\phi \in \SLip_0(X,d)$, it follows (by the definition of each norm) that $\|Q| \leq \|Q\|_{\F}$, so $\|Q\|^{s} \leq 2\|Q\|_{\F}$. Conversely, take $\phi$ in the unit ball of $\Lip_0(X,D)$ such that $\|Q\|_{\F}=\langle Q, \phi\rangle$, and consider $ \phi_{1},\phi_{2}\in \SLip_0(X,d)$ such that $\phi =\phi_{1}-\phi_{2}$, with $\max\left\{ \|\phi _{1}|_{S},\|\phi _{2}|_{S}\right\}\leq\|\phi\|_L\leq 1$. Then
$$\|Q\|_{\F}=\langle Q, \phi\rangle=\langle Q, \phi_1\rangle + \langle -Q, \phi_2\rangle \leq \|Q| + \|-Q|:=\|Q\|^s.$$
The result follows from the fact that $\mathcal{F}_{a}(X,d)=\overline{F}^{\| \cdot \|^{s}_{\mathcal{F}
_{a}}}=\overline{F}^{\Vert \cdot \Vert _{\mathcal{F}}}=\mathcal{F}(X,D).$
\end{proof}

\section{Conclusions, future research}\label{sec:6}

 It seems to be of paramount importance to relate symmetric and asymmetric structures in a way that remains compatible with the embeddings to the corresponding free spaces. At the same time, a given asymmetric space might not be equal to a canonical asymmetrization of some (symmetric) metric space. It is even unknown if a given quasi-distance is always topologically equivalent to
a quasi-distance with this property.
Let us observe that there are several ways to symmetrize a quasi-distance and obtain (symmetric) metric spaces that generate the same underlying topology. Indeed, let $\phi:\mathbb{R}_+^2\to [0,\infty)$ be any nonnegative continuous function satisfying
$\phi(a,b)\geq \max\{a,b\}$ and assume further that $\phi$ is symmetric ({\em i.e.} $\phi(a,b)=\phi(b,a)$, for all $a,b\geq 0$), coercive ({\em i.e.} bounded sublevel sets) and $\phi(a,b)=0$ if and only if $a=b$, for every $a,b\geq 0$. Then for every quasi-distance $d$, we obtain a symmetric distance $d^{\phi}$ via the formula:
$$d^{\phi}(x,y)=\phi(d(x,y),d(y,x)),\quad\text{for all }\, x,y\in X.$$ (In Definition~\ref{sym-dis} we have been focused only on the cases $\phi_0(a,b)=\max\{a,b\}$ and $\phi(a,b)=a+b$). Concerning the inverse procedure (asymmetrization), we have mainly been based on the lattice structure of the nonlinear dual of the metric spaces, which is used to induce a canonical asymmetrization on the initial metric space
$(X,D)$. However, in some cases,  canonical asymmetrizations of the same space may look completely different (see Example~\ref{example_S}).  In a similar spirit, starting from an asymmetric space $(X,d)$ and considering its symmetrization $(X,D)$, it is not known whether or not the set of semi-Lipschitz functions $\SLip_0(X,d)$, viewed as a cone in $\Lip_0(X,D)$, induces a canonical asymmetrization on $(X,D)$ ({\em c.f.} Proposition~\ref{compatibilityIII}) and in particular when conditions~\eqref{eq-cII} or
($\mathcal{H}$) hold. Therefore, many natural questions still remain unexplored and the whole panorama is far from being completely understood. \smallskip

Another topic that merits to be further explored is the particular case of normed spaces. Indeed, considering a normed space $(X,\|\!\cdot\!\|)$ as metric space, leads to a canonical asymmetrization $D_P$ of its distance $D(x,y)=\|y-x\|$. If $X=\R$ (therefore, $\mathcal{F}(\R)=\mathcal{L}^1(\R)$), then taking $P=\mathcal{L}_{+}^{\infty}(\R)$ we observe that the asymmetrized distance $D_P=u$ is associated to an asymmetric norm, see Subsection~\ref{ss-5.3}. It would be interesting to determine which normed spaces admit canonical asymmetrizations of their norms in this way, and inversely, characterize asymmetric norms that can be obtained via this procedure.\smallskip

Let us finish this discussion with a more philosophical comment. Convexity is a fundamental notion of Variational Analysis whose definition relies on a linear structure. It particular it is not distance--related, in strong contrast with usual differential calculus, Lipschitz functions, Riemann/Finsler geometry and metric generalizations of convexity. In particular, the class of Lipschitz functions is clearly affected if we consider asymmetric distances or asymmetrizations of the space. In both cases semi-Lipschitz functions are the adequate morphisms to describe properties of the space (see~\cite{DJV} {\em e.g.}) and this work outlines a natural way to define a notion of a quasi-metric free space as well as of a canonical asymmetrization of a space. Let us recall that in a metric space $(X,D)$, every Lipschitz function $\varphi:K\mapsto \R$ can be extended to a Lipschitz function $\tilde{\varphi}:X\mapsto \R$ without any increase in the Lipschitz constant, that is $\Lip(\varphi,K)=\Lip(\tilde{\varphi},X)$. Indeed, McShane, in \cite{McShane}, gave a concrete formula (based on inf-convolution) to obtain such extension, called minimal extension. In a completely analogous way one constructs minimal semi-Lipschitz extensions for real-valued semi-Lipschitz functions defined on a quasi-metric space $(X,d)$ maintaining the semi-Lipschitz constant  (see proof of Proposition~\ref{prop-mc}). An important instance of minimal Lipschitz extension is the so-called AMLE (absolutely minimal Lipschitz extension), which in case of a Euclidean space corresponds to the solution of the infinite-Laplacian operator, see \cite{Jensen1993}. The notion of absolutely minimal semi-Lipschitz extension makes perfect sense in an asymmetric framework, but it is not known whether or not such an extension always exists and if, in case of a finite dimensional asymmetric normed space, it can be identified to a solution of some differential-type operator. Concerning this latter topic, for the time being there is no clear way to deal with differentiability in asymmetric structures. Doing this in a satisfactory manner seems to be somehow related to the fact that the asymmetric space is canonical, since in this case, and only there, one would expect to obtain a canonical asymmetric differential calculus. Formalizing and proving this meta-theorem is mathematically challenging, but at the same time it will shed new light on asymmetric analysis, topic on which very little is known and might be relevant in the future. \smallskip

\subsection*{Acknowledgement} The authors wish to express their gratitude to the referee for his careful reading and comments, as well as for his suggestion to use Godard's characterization of isometric embeddings to an $\R$-tree, which led to Propositions~\ref{L1(T)} and~\ref{Rtrees}. \smallskip\newline
Part of this work was realized during a research stay of A. Daniilidis (``Gaspard Monge'' invited professor) and F. Venegas (Research trainee) at INRIA (\'Equipe Tropicale) and CMAP of \'Ecole Polytechnique (France), from September to December 2018. The authors wish to thank G.~Godefroy and S.~Gaubert for insight comments. Part of this work has been presented by the third author at the conference ``Function Theory on Infinite Dimensional Spaces XVI", held at the Complutense University of Madrid (November~2019). This author is grateful to the organizing committee of this event for hospitality and financial support. He also thanks S.~Tapia, A.~Proch\'azka and C.~Petitjean for useful discussions.

\medskip

\noindent Shortly before the conclusion of this work, it came to our knowledge that L.~Candido, P.~Kaufmann and J.~Rom\~{a}o have been working on this topic, and independently obtained results, which partially overlap with some of the ones presented here, see \cite{BCK}.


\newpage

\noindent Aris Daniilidis

\medskip

\noindent DIM--CMM, UMI CNRS 2807\newline Beauchef 851, FCFM, Universidad de
Chile \smallskip

\noindent E-mail: \texttt{arisd@dim.uchile.cl};\, \, \newline
\noindent \texttt{http://www.dim.uchile.cl/{\raise.17ex\hbox{$\scriptstyle\sim$}}arisd}

\medskip

\noindent Research supported by the grants: \newline CMM AFB170001,
FONDECYT 1211217,  ECOS-CONICYT C18E04 (Chile), \\
and PGC2018-097960-B-C22 (MCIU/AEI/ERDF, UE).

\vspace{0.8cm}

\noindent Juan Mat\'{i}as Sepulcre

\medskip

\noindent Departamento de Matem\'{a}ticas, Facultad de Ciencias, Ap. de Correos 99, 03080 Alicante
\newline Universidad de
Alicante \smallskip

\noindent E-mail: \texttt{JM.Sepulcre@ua.es};

\noindent \texttt{http://personal.ua.es/en/jm-sepulcre/}

\medskip

\noindent Research supported by the grant:

\noindent PGC2018-097960-B-C22 (MCIU/AEI/ERDF, UE).
\medskip

\vspace{0.8cm}

\noindent Francisco Venegas M.

\medskip

\noindent DIM--CMM, UMI CNRS 2807\newline Beauchef 851, FCFM, Universidad de
Chile \smallskip

\noindent E-mail: \texttt{venegas.francisco92@gmail.com};
\medskip

\noindent Research supported by the grants: \newline CMM AFB170001,
FONDECYT 1171854,  ECOS-CONICYT C18E04, \\
and CONICYT Doctorate Fellowship PFCHA 2019--21191167 (Chile).
\medskip

\bigskip
\begin{center}
\rule{16cm}{1.4pt}
\end{center}


\begin{thebibliography}{HD82}




\normalsize
\baselineskip=17pt


\bibitem{Gaubert1}
{\sc M.~Akian, S.~Gaubert, A.~Hochart}, Minimax representation of nonexpansive functions and application to zero-sum recursive games, {\em J. Convex Anal.} {\bf 25} (1) (2018), 225--240.

\bibitem{AK2009}
{\sc F.~Albiac, N.~Kalton}, Lipschitz structure of quasi-Banach spaces, {\em Israel J. Math.} {\bf 170} (2009), 317--335.
	
\bibitem{APP}
{\sc R. J. Aliaga, C. Petitjean, A. Proch\'azka,} Embeddings of Lipschitz-free spaces into $\ell_1$, {\em arXiv preprint arXiv:1909.05285}, (2019).

\bibitem{AE1956}
{\sc R.~Arens, J.~Eells}, On embedding uniform and topological spaces, {\em Pacific J. Math.} {\bf 6} (1956), 397--403.
	
\bibitem{Bachir2004}
{\sc M.~Bachir}, A non-convex analogue to Fenchel duality, {\em J. Funct. Anal. }{\bf 181} (2001), 300--312.

\bibitem{BF2020}
{\sc M. Bachir, G. Flores}, Index of symmetry and topological classification of asymmetric normed spaces,
{\em Rocky Mountain J. Math.} (to appear)

\bibitem{BCS}
{\sc D.~Bao, S-S~Chern, Z.~Shen}, {\em An introduction to Riemann-Finsler geometry}, Vol. {\bf 200 } (2012), Springer Science \& Business Media.

\bibitem{Borel2012}
{\sc L.~Borel-Mathurin}, Approximation properties and non-linear geometry of Banach spaces, {\em Houston J. Math. } {\bf 38} (2012), 1135--1148.

\bibitem{CJ}
{\sc J.~Cabello-S\'anchez, J.~A.~Jaramillo}, 	A functional representation of almost isometries,
 {\em J. Math. Anal. Appl.} \textbf{445} (2017), 1243--1257.
	
\bibitem{BCK}
{\sc L.~Candido, P.~Kaufmann, J.~Rom\~{a}o}, Asymmetric Lipschitz-free spaces, {\em (in preparation)}.

\bibitem{CDW2016}
{\sc M. C\'uth, M. Doucha, P. Wojtaszczyk}, On the structure of Lipschitz-free spaces,
{\em Proc. Amer. Math. Soc.} \textbf{144} (2016), 3833--3846.

\bibitem{cobzas2012functional}
{\sc S.~Cobzas}, {\em Functional analysis in asymmetric normed spaces}, Springer Science \& Business Media, 2012.

\bibitem{cobzas2019lips}
{\sc S.~Cobzas, R.~Miculescu, A.~Nicolae}, {\em Lipschitz Functions}, Lecture Notes in Mathematics, Springer International Publishing, 2019.

\bibitem{DJV}
{\sc A.~Daniilidis, J.~A. Jaramillo, F.~Venegas}, Smooth semi-Lipschitz functions and almost isometries between Finsler Manifolds, {\em J. Funct. Anal.} \textbf{279} (2020), 1--29.

\bibitem{DH} {\sc S.~Deng, Z.~Hou, Zixin}, The group of isometries of a Finsler space,
{\em Pacific J. Math.} \textbf{207} (2002), 149--155.
	
\bibitem{toni} {\sc L.~Garcia-Lirola, A.~Proch\'azka}, Pelczynski space is isomorphic to the Lipschitz free space over a compact set,
{\em Proc. Amer. Math. Soc. } {\bf 147} (2019), 3057--3060.

\bibitem{GR2005}
{\sc L.~M. Garc{\'\i}a-Raffi}, Compactness and finite dimension in asymmetric normed linear spaces,
{\em Topology Appl.} \textbf{153} (2005),  844--853.
	 	
\bibitem{garcia2004metrizability}
{\sc L.~M.~Garc{\'\i}a-Raffi, S.~Romaguera, E.A.~S{\'a}nchez-P{\'e}rez, O.~Valero},
Metrizability of the unit ball of the dual of a quasi-normed cone, {\em Boll. Uni. Mat. Ital.},   {\bf 7} (2004), 483--492.
	
\bibitem{Gaubert2}
{\sc S.~Gaubert, G.~Vigeral}, A maximin characterization of the escape rate of nonexpansive mappings in metrically convex spaces,
{\em Math. Proc. Cambridge Philos. Soc.}, {\bf 152} (2012), 341--363.

\bibitem{Godard}
{\sc A.~Godard}, Tree metrics and their Lipschitz-free spaces, {\em Proc. Amer. Math. Soc. } {\bf 138} (12) (2010), 4311--4320.

\bibitem{GK_2003}
{\sc G.~Godefroy, N.~J. Kalton}, Lipschitz-free {B}anach spaces, {\em Studia Math.} {\bf 159} (2003), 121--141.
	
\bibitem{G_2015}
{\sc G.~Godefroy}, A survey on {L}ipschitz-free {B}anach spaces, {\em Comment. Math.}, {\bf 55} (2015), 89--118.
	
\bibitem{Hajek2014}{\sc P. Hajek, E. Pernecka}, On Schauder bases in Lipschitz-free spaces, {\em J. Math. Anal. Appl.} {\bf 416 } (2014), 629--646.
	
\bibitem{Hanin92} {\sc L. Hanin}, Kantorovich-Rubinstein norm and its application in the theory of Lipschitz spaces, {\em Proc. Amer. Math. Soc.} {\bf 115} (1992), 345--352.
	
\bibitem{JLP} {\sc M.A.~Javaloyes, L.~Lichtenfelz and P. Piccione}, Almost isometries of non-reversible metrics with applications to stationary spacetimes, {\em J. Geom. Phys.} \textbf{89} (2015) 38--49.

\bibitem{Jensen1993} {\sc R. Jensen}, Uniqueness of Lipschitz extensions: minimizing the sup norm of the gradient,
{\em Arch. Rational Mech. Anal.} {\bf 123} (1993), 51--74.




\bibitem{Kantorovich42} {\sc L. Kantorovich}, On mass transfer, {\em Dokl. Akad. Nauk SSSR} {\bf 37} (1942), 227-229 (in Russian).

\bibitem{Kantorovich57} {\sc L. Kantorovich, G. Rubinstein}, On a functional space and certain extremal problems, {\em Dokl. Akad. Nauk SSSR} {\bf 115} (1957), 1058-1061 (in Russian).

\bibitem{Klee_1951}
{\sc V.~L.~Klee}, Some characterizations of compactness, {\em Amer. Math. Monthly } {\bf 58} (1951), 389--393. 	
	
\bibitem{McShane} {\sc  J.  McShane},  Extension  of  range  of  functions, {\em Bull.  Amer.  Math.Soc.} {\bf 40} (1934), 837–842.

\bibitem{Ext}
{\sc C.~Must\u{a}\c{t}a}, Extensions  of  semi-Lipschitz  functions  on  quasi-metric  spaces,  {\em Rev. Anal. Numer. Theor. Approx.} {\bf 30} (1) (2001), 61--67.

\bibitem{oltra2004isometries}
{\sc S.~Oltra, O.~Valero}, Isometries on quasi-normed cones and bicompletion, {\em New Zealand J. Math} {\bf 33} (2004), 83--90.
	
\bibitem{Gaubert3}
{\sc A.~Papadopoulos, M.~Troyanov}, Weak Finsler structures and the Funk weak metric, {\em Math. Proc. Camb. Phil. Soc.}  {\bf 147} (2009), 419--437.

\bibitem{pestov93}
{\sc V. Pestov}, Free Banach-Lie algebras, couniversal Banach-Lie groups, and more, {\em Pacific J. Math.} {\bf 157} (1993), 137â€“144.

\bibitem{raikov64} {\sc D. Raikov}, Free locally convex spaces for uniform spaces, {\em Mat. Sb. (N.S.)} {\bf 63} (1964), 582--590 (in Russian).

\bibitem{romaguera2005properties}
{\sc S. Romaguera, M. Sanchis}, Properties of the normed cone of semi-Lipschitz functions, {\em Acta Math. Hung.}, {\bf 108} (2005), 55--70.

\bibitem{Rudin}
{\sc W.~Rudin}, {\em Functional analysis}, International series in pure and applied mathematics, McGraw-Hill Education, 1991.
	
\bibitem{valero2006quotient}
{\sc O.~Valero}, Quotient normed cones, In {\em Proc. Ind. Acad. Sci. - Math. Sciences}, Vol. {\bf 116}, pp. 175--191, (Springer, 2006).
	
\bibitem{Weaver}
{\sc N.~Weaver}, {\em Lipschitz Algebras}, World Scientific Publishing Co., Singapore, 1999.

\end{thebibliography}
\end{document}